\newcommand{\A}{{\mathcal A}}
\newcommand{\F}{{\mathcal F}}
\newcommand{\G}{{\mathcal G}}
\newcommand{\J}{{\mathcal J}}
\newcommand{\X}{{\mathcal X}}
\newcommand{\Y}{{\mathcal Y}}
\newcommand{\V}{{\mathcal V}}
\newcommand{\M}{{\mathcal M}}
\newcommand{\I}{{\mathcal I}}
\newcommand{\U}{{\mathcal U}}
\newcommand{\Z}{{\mathcal Z}}
\newcommand{\dist}{{\mbox{dist}}}
\begin{document}

\title{Support matrix machine: exploring sample sparsity, low rank, and adaptive sieving in high-performance computing
}

\titlerunning{Support matrix machine: exploring sample sparsity...}        

\author{Can Wu \and Dong-Hui Li \and Defeng Sun
}

\authorrunning{C. Wu, D. Li, D. Sun} 

\institute{Can Wu \at
	          School of Mathematics and Statistics, Hainan University,
	          Haikou, 570228, China. {This work was conducted during her Postdoctoral Fellowship at the Department of Applied Mathematics, The Hong Kong Polytechnic University.}\\
              \email{wucan-opt@hainanu.edu.cn}           
           \and
           Dong-Hui Li \at
            School of Mathematical Sciences, South China Normal University, Guangzhou, 510631, China\\
            \email{lidonghui@m.scnu.edu.cn}
            \and
            Defeng Sun \at
            Department of Applied Mathematics, The Hong Kong Polytechnic University, Hung Hom, Hong Kong \\
            \email{defeng.sun@polyu.edu.hk}
}

\date{Received: date / Accepted: date}

\maketitle

\begin{abstract}
Support matrix machine (SMM) is a successful supervised classification model for matrix-type samples. Unlike support vector machines, it employs low-rank regularization on the regression matrix to effectively capture the intrinsic structure embedded in each input matrix.	
When solving a large-scale SMM, a major challenge arises from the potential increase in sample size, leading to substantial computational and storage burdens.
To address these issues, we design a semismooth Newton-CG (SNCG) based augmented Lagrangian method (ALM) for solving the SMM.
The ALM exhibits an asymptotic R-superlinear convergence if a strict complementarity condition is satisfied.
The SNCG method is employed to solve the ALM subproblems, achieving at least a superlinear convergence rate under the nonemptiness of an index set.
Furthermore, the sparsity of samples and the low-rank nature of solutions enable us to reduce the computational cost and storage demands for the Newton linear systems.
Additionally, we develop an adaptive sieving strategy that generates a solution path for the SMM by exploiting sample sparsity. The finite convergence of this strategy is also demonstrated.
Numerical experiments on both large-scale real and synthetic datasets validate the effectiveness of the proposed methods.
\keywords{Support matrix machine \and Sample sparsity \and Low rank \and Adaptive sieving \and Augmented Lagrangian method \and Semismooth Newton method}
\subclass{90C06 \and 90C25 \and 90C90}
\end{abstract}

\section{Introduction}
\label{intro}
Numerous well-known classification methods, such as linear discriminant analysis, logistic regression, support vector machines (SVMs), and AdaBoost, were initially designed for vector or scalar inputs \cite{hastie2009elements}. On the other hand, matrix-structured data, like digital images with pixel grids \cite{yang2004two} and EEG signals with multi-channel voltage readings over time \cite{sanei2013eeg}, are also prevalent in practical applications.
Classifying matrix data often involves flattening it into a vector, but this causes several issues \cite{qian2019robust}: a) High-dimensional vectors increase dimensionality problems, b) Loss of matrix structure and correlations, and c) Structural differences require different regularization methods.
To circumvent these challenges, the support matrix machine (SMM) was introduced by Luo, Xie, Zhang, and Li \cite{luo2015support}.
Given a set of training samples $\{X_i,y_i\}^n_{i=1}$, where $X_i\in\mathbb{R}^{p\times q}$  represents the $i$th
feature matrix and $y_i\in\{-1,+1\}$ is its corresponding class label,
the optimization problem of the SMM is formulated as
\begin{equation}\label{Model:SMM}
	\mathop{\mbox{minimize}}\limits_{(W,b)\in\mathbb{R}^{p\times q}\times\mathbb{R}}\ \frac{1}{2}\|W\|^2_{\F}+\tau\|W\|_*
	+C\sum^n_{i=1}\max\left\{ 1-y_i\left[ \mbox{tr}(W^{\top}X_i)+b \right],0 \right\},
\end{equation}
where $W\in\mathbb{R}^{p\times q}$ is the matrix of regression coefficients, $b\in\mathbb{R}$ is an offset term,
$C$  and $\tau$ are positive regularization parameters. Here, $\|\cdot\|_{\F}$ and $\|\cdot\|_*$ denote the Frobenius norm
and nuclear norm of a matrix, respectively.
The objective function in (\ref{Model:SMM}) combines two key components:  a) the spectral elastic net penalty $(1/2)\|\cdot\|^2_\F+\tau\|\cdot\|_*$, enjoying grouping effect\footnote{At a solution pair
	$(\overline{W}, \overline{b})$ of the model (\ref{Model:SMM}), the columns of the regression matrix
	$\overline{W}$ exhibit a grouping effect when the associated features display strong correlation \cite{luo2015support}.}
and low-rank structures; b) the hinge
loss, ensuring sparsity and robustness.
This model has been incorporated as a key component in deep stacked networks \cite{hang2020deep,liang2022deep} and semi-supervised learning frameworks \cite{li2023intelligent}.
Additionally, the SMM model (\ref{Model:SMM}) and its variants are predominantly applied in fault diagnosis (e.g., \cite{pan2019fault,li2020non,li2022highly,li2022fusion,pan2022pinball,pan2023deep,geng2023fault,li2024dynamics,xu2024intelligent,pan2024semi,li2024intelligent}) and EEG classification (e.g., \cite{zheng2018robust,zheng2018multiclass,razzak2019multiclass,hang2020deep,chen2020novel,liang2022deep,hang2023deep,liang2024adaptive}).
For a comprehensive overview of SMM applications and extensions, see the recent survey \cite{kumari2024support}.

The two-block alternating direction methods of multipliers (ADMMs) are the most widely adopted methods for solving the convex SMM model (\ref{Model:SMM}) and its extensions, including least squares SMMs  \cite{li2020non,li2022highly,liang2024adaptive,hang2023deep}, multi-class SMMs \cite{zheng2018multiclass,pan2019fault}, weighted SMMs \cite{li2022fusion,li2024intelligent,pan2024semi}, transfer SMMs \cite{chen2020novel,pan2022pinball}, and pinball SMMs \cite{feng2022support,pan2022pinball,pan2023deep,geng2023fault}.
Compared to classical SVMs,
the primary challenge in solving a large-scale SMM model (\ref{Model:SMM}) and the above extensions arises from the additional nuclear norm term. Specifically, their dual problems are no longer a quadratic program (QP) due to the presence of the sphere constraint in the sense of the spectral norm, which precludes the use of the off-the-shelf QP solvers like LIBQP\footnote{The codes are available at \url{https://cmp.felk.cvut.cz/~xfrancv/pages/libqp.html}.}.
To overcome this drawback, Luo et al. \cite{luo2015support}  suggested to
utilize the two-block fast ADMM with restart (F-ADMM), proposed in \cite{goldstein2014fast}. Indeed, it is reasonable to choose two-block ADMM algorithms over three-block ADMM \cite{chen2017efficient} because the former consistently requires fewer iterations, resulting in lower singular value decomposition costs from the soft thresholding operator.
However, F-ADMM solves the dual subproblems by calling the LIBQP solver, which requires producing at least one $n \times n$ matrix. This can cause a memory burden as the sample size
$n$ increases significantly.
As a remedy, Duan et al. \cite{duan2017quantum}
introduced a quantum algorithm that employs the quantum matrix inversion algorithm \cite{harrow2009quantum} and  a quantum singular
value thresholding algorithm to update the subproblems's variables in the F-ADMM framework, respectively.
Nevertheless, no numerical experiments were reported in \cite{duan2017quantum}, leaving the practical effectiveness
of the proposed algorithm undetermined. Efficiently solving the SMM models under large-scale samples
remains a significant challenge.

The first purpose of this paper is to devise an efficient and robust algorithm for solving the SMM model (\ref{Model:SMM})
when $n$ is significantly large (e.g., several hundred thousand or up to a million) and $pq$ is large (e.g., ten of thousands).
By leveraging the sample sparsity and the low-rank property of the solution to model (\ref{Model:SMM}), we develop an efficient and robust semismooth Newton-CG based augmented Lagrangian method. Notably, these two properties depend on the values of the parameters $C$ and $\tau$.
Here, sample sparsity indicates that the solution of (\ref{Model:SMM}) relies solely on a subset of samples (feature matrices) known as active samples (support matrices), while disregarding the remaining portion referred to as non-active samples (non-support matrices). At each iteration, a conjugate gradient (CG) algorithm is employed to solve the Newton linear system. By making full use of the sample sparsity and the solution's low-rank property in (\ref{Model:SMM}), the primary computational cost
in each iteration of the CG method can be reduced to $O(pq\max\{|\mathcal{J}_1|,|\alpha|\})$, instead of $O(npq)$. Here, the index set $\mathcal{J}_1$ ultimately corresponds  the active support matrices. Its cardinality $|\mathcal{J}_1|$ is typically much smaller than the total sample size $n$. And the cardinality of the index set $\alpha$ eventually corresponds to the rank of the solution matrix $W$ in the model (\ref{Model:SMM}).
It is worth mentioning that the proposed computational framework can be readily extended to convex variants of the SMM model (\ref{Model:SMM}), such as multi-class SMMs \cite{zheng2018multiclass,pan2019fault}, weighted SMMs \cite{li2022fusion,li2024intelligent,pan2024semi}, transfer SMMs \cite{chen2020novel,pan2022pinball}, and pinball SMMs \cite{feng2022support,pan2022pinball,pan2023deep,geng2023fault}.


Our second goal in this paper is to develop a strategy that efficiently guesses and adjusts irrelevant samples before the starting of the aforementioned optimization process. This method is particularly useful for generating a solution path for models (\ref{Model:SMM}).
Recently, Ghaoui et al. \cite{el2012safe} introduced a safe feature elimination method for $\ell_1$-regularized convex problems.
It inspired researchers to extend these ideas to conventional SVMs and develop safe sample screening rules \cite{ogawa2013safe,wang2014scaling,zimmert2015safe,pan2019novel}.
Those rules aim to first bound the primal SVM solution within a valid region and then exclude non-active samples by relaxing the Karush-Kuhn-Tucker conditions, which reduces the problem size and saves computational costs and storage.
However, existing safe screening rules are typically problem-specific and cannot be directly applied to the SMM model (\ref{Model:SMM}).
The major challenges include two parts: a) the objective function in (\ref{Model:SMM}) is not strongly convex due to the intercept term $b$ \cite{zimmert2015safe}; b) the nuclear norm term complicates the removal of non-support matrices using first-order optimality conditions for the dual SVM model \cite{wang2014scaling}.	
Recently, Yuan, Lin, Sun, and Toh \cite{yuan2023adaptive,lin2020adaptive} introduced a highly efficient and
flexible adaptive sieving (AS) strategy for convex composite sparse machine learning models.
By exploiting the inherent sparsity of the solutions,
it significantly reduces computational load through solving a finite number of reduced subproblems on a smaller scale \cite{yuan2022dimension,li2023mars,wu2023convex}.
The effectiveness of this AS strategy crucially hinges on solution sparsity, which dictates the size of these reduced subproblems.
Unlike the sparsity emerges at solutions,
the SMM model (\ref{Model:SMM}) inherently exhibits sample sparsity.
As a result, the application of the AS strategy  to (\ref{Model:SMM}) is not straightforward.
We will extend the idea of the AS strategy in \cite{yuan2023adaptive,lin2020adaptive} to generate a solution path for the SMM model (\ref{Model:SMM}).

Our main contributions in this paper are outlined as follows:
\begin{itemize}
\item[1)]
To solve the SMM (\ref{Model:SMM}), we propose an augmented Lagrangian method (ALM), which guarantees asymptotic R-superlinear convergence of the KKT residuals under a mild strict complementarity condition, commonly satisfied in the classical soft margin SVM model. For solving each ALM subproblem, we use the semismooth Newton-CG method (SNCG), ensuring superlinear convergence when an index set is nonempty, with its cardinality ultimately corresponding to the number of active support matrices.
\item[2)]
The main computational bottleneck in solving semismooth Newton linear systems with the conjugate gradient method is the execution of the generalized Jacobian linear transformation. By leveraging the sample sparsity and solution's low-rank structure in model (\ref{Model:SMM}), we can reduce the cost of this transformation from $O(pq\max\{|\J_1|,|\alpha|\})$ to $O(npq)$, where the cardinalities of $\J_1$ and $\alpha$ are typically much smaller than the sample size $n$.
Numerical experiments demonstrate that ALM-SNCG achieves an average speedup of 422.7 times over F-ADMM on four real datasets, even for low-accuracy solutions of model (\ref{Model:SMM}).
\item[3)]
To efficiently generate a solution path for the SMM models with a huge sample size, we employ an AS strategy. This strategy iteratively identifies and removes inactive samples to obtain reduced subproblems. 
	Solving these subproblems inexactly ensures convergence to the desired solution path within a finite number of iterations (see Theorem \ref{Thm:AS_convergence}).
Numerical experiments reveal that, for generating high-accuracy solution paths of models (\ref{Model:SMM}), the AS strategy paired with ALM-SNCG is, on average, 2.53 times faster than the warm-started ALM-SNCG on synthetic dataset with one million samples.
\end{itemize}

The rest of the paper is organized as follows. Section \ref{Sec: structural_properties} discusses the sample sparsity and the solution's low-rank structure of the SMM model (\ref{Model:SMM}). Building on these properties, 
Section \ref{Sec:ALM-SNCG} introduces the computational framework of a semismooth Newton-CG based augmented Lagrangian method for solving this model. 
Section \ref{Sec:linear_system} explores how these properties reduce computational costs in solving the Newton linear system.
Section \ref{Section:AS} introduces an adaptive sieving strategy for computing a solution path of the SMM models across a fine grid of $C$'s. Finally, Section \ref{Sec:Experiments} presents extensive numerical experiments on synthetic and real data to demonstrate the effectiveness of the proposed methods.

\vspace{4mm}

{\bf\noindent Notation.} Let $\mathbb{R}^{m \times n}$ be the space of all $m \times n$ real matrices and $\mathbb{S}^n$ be the space of all $n \times n$ real symmetric matrices.
The notation  $0_{m\times n}$ stands for the zero matrix in $\mathbb{R}^{m\times n}$, and $I_n\in\mathbb{R}^{n\times n}$ is the identity matrix. We use $\mathcal{O}^n$ to denote the set of all $n\times n$ orthogonal matrices. For $X, Y\in\mathbb{R}^{m\times n}$, their inner product is defined by
$\langle X,Y \rangle=\mbox{tr}(X^{\top}Y)$,
where ``$\mbox{tr}$" represents the trace operation. The Frobenius norm of $X\in\mathbb{R}^{m\times n}$ is $\|X\|_{\F}=\sqrt{\langle X,X \rangle}$. The $n$-dimensional vector space $\mathbb{R}^{n\times 1}$ is abbreviated as $\mathbb{R}^n$. The vector $e_n\in\mathbb{R}^n$ is the vector whose elements are all ones.
We denote the index set $[m]:=\{1,2,\ldots,m\}$.
For any index subsets $\alpha\subseteq[m]$ and $\beta\subseteq[n]$, and any $X\in\mathbb{R}^{m\times n}$, let $X_{\alpha\beta}$ denote the $|\alpha|\times|\beta|$ submatrix of $X$ obtained by removing all the rows of $X$ not in $\alpha$ and all the columns of $X$ not in $\beta$.
For any $x\in\mathbb{R}^n$, we write $x_{\alpha}$ as the subvector of $x$ obtained by removing all the elements of $x$ not in $\alpha$, and $\mbox{Diag}(x)$ as the diagonal matrix with diagonal entries $x_i$, $i=1,\ldots,n$. The notation $\|\cdot\|_2$ represents the matrix spectral norm, defined as the largest singular value of the matrix.
For any $\tau>0$, we define $\mathbb{B}^{\tau}_2:=\{ X\in\mathbb{R}^{m\times n}\ |\ \|X\|_2\leq\tau \}$. The metric projection of $X$ onto $\mathbb{B}^{\tau}_2$ is denoted by $\Pi_{\mathbb{B}^{\tau}_2}(X)$, with $\partial\Pi_{\mathbb{B}^{\tau}_2}(X)$ representing its Clarke generalized Jacobian at $X$. Similarly, for any closed convex set $K\subset\mathbb{R}^n$, $\Pi_K(\omega)$ and $\partial\Pi_K(\omega)$ denote the metric projection of $\omega\in\mathbb{R}^n$ onto $K$ and its Clarke generalized Jacobian at $\omega$, respectively.

\section{The structural properties of the SMM model (\ref{Model:SMM})}\label{Sec: structural_properties}
This section highlights the sample sparsity and low-rank properties at the Karush-Kuhn-Tucker (KKT) points of the model (\ref{Model:SMM}), providing the foundation for developing an efficient algorithm.


For notational simplicity, we first reformulate the SMM model (\ref{Model:SMM}). Denote
\[
\mathcal{X}:=\mathbb{R}^{p\times q}\times\mathbb{R}\times\mathbb{R}^n\times\mathbb{R}^{p\times q}\quad \mbox{and}\quad \mathcal{Y}:=\mathbb{R}^n\times\mathbb{R}^{p\times q}.
\]
Define the linear operator $\A:\mathbb{R}^{p\times q}\rightarrow\mathbb{R}^n$ and its corresponding adjoint $\A^*:\mathbb{R}^n\rightarrow\mathbb{R}^{p\times q}$ as follows:
\begin{equation}\label{def:A_AT}
	\A W=\left( \langle y_1X_1,W \rangle,\ldots,\langle y_nX_n,W \rangle \right)^{\top}, \
	\A^*z=\sum^n_{k=1}z_ky_kX_k, \ \forall\ (z, W)\in\Y.
\end{equation}
Let $y=(y_1,y_2\ldots,y_n)^{\top}$, $v=e_n-\A W-by$, and $S=[0, C]^n$. The support function of $S$ is given by $\delta^*_S(v)=C\sum^n_{i=1}\max\{v_i,0\}$ for any $v\in\mathbb{R}^n$.
The SMM model (\ref{Model:SMM}) can be equivalently expressed as
\begin{equation}\label{Model:SMM_eq2}
	\begin{array}{cl}
		\mathop{\mbox{minimize}}\limits_{(W,b,v,U)\in\mathcal{X}} &
		\displaystyle\frac{1}{2}\|W\|^2_{\F}+\tau\|U\|_{*}+\delta^*_{S}(v)\\
		\mbox{subject to} & \A W+by+v=e_n,\\
		{} & W-U=0_{p\times q}. \tag{P}
	\end{array}
\end{equation}
Its Lagrangian dual problem can be written as
\begin{equation}\label{Model:dual_SMM}
	\begin{array}{cl}
		\mathop{\mbox{maximize}}\limits_{(\lambda,\Lambda)\in\mathcal{Y}} & -\Phi(\lambda,\Lambda):=\displaystyle\frac{1}{2}\|\A^*\lambda+\Lambda\|^2_{\F}+\langle \lambda,e_n \rangle+\delta_{S\times\mathbb{B}^\tau_2}(-\lambda,\Lambda)\\
		\mbox{subject to} & \qquad \qquad \quad\ \ y^{\top}\lambda=0.\\
	\end{array}\tag{$D$}
\end{equation}
It is easy to get the following KKT system of (\ref{Model:SMM_eq2}) and (\ref{Model:dual_SMM}):
\begin{equation}\label{eq:KKT_system_SMM}
	\left\{
	\begin{array}{l}
		W+\A^*\lambda+\Lambda=0,\ \lambda^{\top}y=0,\ 0\in\partial\delta^*_S(v)+\lambda,\\[2mm]
		0\in\partial\,\tau\|U\|_*-\Lambda,\ \A W+by+v=e_n,\ W-U=0_{p\times q}.
	\end{array}
	\right.
\end{equation}
Without loss of generality, suppose that there exist indices $i_0$, $j_0\in[n]$ such that $y_{i_0}=+1$ and $y_{j_0}=-1$ (otherwise, the classification task cannot be executed).
Clearly, the solution set of the KKT system (\ref{eq:KKT_system_SMM}) is always nonempty. Indeed,
the objective function in problem (\ref{Model:SMM}) is a real-valued,  lower level-bounded convex function.
It follows from \cite[Theorem 1.9]{rockafellar2009variational} that the solution set of (\ref{Model:SMM}) is nonempty, convex, and compact,
with a finite optimal value. Since the equivalent problem (\ref{Model:SMM_eq2}) of (\ref{Model:SMM}) contains linear constraints only, by \cite[Corollary 28.3.1]{rockafellar1970convex}, the KKT system (\ref{eq:KKT_system_SMM}) has at least one solution.
It is known from \cite[Theorems 28.3, 30.4, and Corollary 30.5.1]{rockafellar1970convex}
that  $(\overline{W},\overline{b},\overline{v},\overline{U}, \overline{\lambda}, \overline{\Lambda})$ is a solution to (\ref{eq:KKT_system_SMM}) if and only if $(\overline{W},\overline{b},\overline{v},\overline{U})$ is an optimal solution to (\ref{Model:SMM_eq2}),
and $(\overline{\lambda},\overline{\Lambda})$ is an optimal solution to (\ref{Model:dual_SMM}).

\subsection{Characterizing sample sparsity}\label{subsec:sample_sparsity}
For the SMM model (\ref{Model:SMM_eq2}), a sufficiently large sample size 
$n$ introduces significant computational and storage challenges. However, the inherent sample sparsity offers a promising avenue for developing efficient algorithms.

Suppose that $(\overline{W},\overline{b},\overline{v},\overline{U},\overline{\lambda},\overline{\Lambda})$ is a solution of the KKT system (\ref{eq:KKT_system_SMM}). It follows that 
$$
\overline{W}=-\A^*\overline{\lambda}-\overline{\Lambda}=\sum^n_{j=1}(-\overline{\lambda})_jy_jX_j-\overline{\Lambda}.
$$
This implies that the value of $\overline{W}$ depends solely on the samples associated with the non-zero elements of $-\overline{\lambda}$, a property referred to as sample sparsity.
In addition, by $-\overline{\lambda}\in\partial\delta^*_S(\overline{v})$ in (\ref{eq:KKT_system_SMM}), we deduce that for each $j\in[n]$,
\begin{equation}\label{eq:lam_in_partial_delta}
(-\overline{\lambda})_j\in
\left\{\eta\in\mathbb{R}\ \left|\ \begin{array}{ll}
	\eta=C & \mbox{if}\ \overline{v}_j>0\\
	\eta\in[0,C] & \mbox{if}\ \overline{v}_j=0\\
	\eta=0 & \mbox{otherwise}
\end{array}\right.
\right\}.
\end{equation}
It implies that 
$-\overline{\lambda}\in[0,C]^n$. Analogous to the concept of support vectors \cite{cortes1995support}, we introduce the definition of support matrices.

\begin{definition}[Support matrix]
An example $X_j$ that satisfies the inequality 
$0<(-\overline{\lambda})_j\leq C$ is said a support matrix at $(\overline{W},\overline{b})$; otherwise, it is a non-support matrix. The set of all support matrices at $(\overline{W},\overline{b})$ is denoted as
$$
SM:=\{X_j\ |\ 0<(-\overline{\lambda})_j\leq C, j\in[n]\}.
$$
\end{definition}

To illustrate the geometric interpretation of support matrices, we define the optimal hyperplane $H(\overline{W},\overline{b})$ and the decision boundaries $H_+(\overline{W},\overline{b})$ and $H_{-}(\overline{W},\overline{b})$ associated with $(\overline{W},\overline{b})$ as follows:
$$
\begin{array}{c}
	H(\overline{W},\overline{b}):=\{X\ |\ \langle \overline{W},X \rangle+\overline{b}=0\},\\
	H_+(\overline{W},\overline{b}):=\{X\ |\ \langle \overline{W},X \rangle+\overline{b}=1\}, \
	H_{-}(\overline{W},\overline{b}):=\{X\ |\ \langle \overline{W},X \rangle+\overline{b}=-1\}.
\end{array}
$$

Based on (\ref{eq:lam_in_partial_delta}) and $(\overline{v})_j=1-y_j(\langle \overline{W},X_j \rangle+\overline{b})$, we obtain that
\begin{equation}\label{eq:inclusion_SM}
SM\subseteq\{X_j\ |\ \overline{v}_j\geq0, j\in[n]\}=\{X_j\ |\ y_j(\langle \overline{W},X_j \rangle+\overline{b})\leq 1, j\in[n]\}.
\end{equation}
Thus, geometrically, support matrices include a subset of examples lying on the decision boundaries, within the region between them, or misclassified.

It can also be observed from (\ref{eq:lam_in_partial_delta}) that
$$
\{X_j\ |\ y_j(\langle \overline{W},X_j \rangle+\overline{b})<1, j\in[n]\}=\{X_j\ |\ \overline{v}_j>0, j\in[n]\}\subseteq\{X_j\ |\ (-\overline{\lambda})_j=C\}.
$$
This implies that, after excluding support matrices 
$X_j$ at $(\overline{W},\overline{b})$ where 
$(-\overline{\lambda})_j=C$, only those remaining on the decision boundaries are retained, i.e.,
\begin{align*}
	ASM:=\{X_j\ |\ (-\overline{\lambda})_j\in(0,C), j\in[n]\}
	\subseteq\{X_j\ |\ y_j(\langle \overline{W},X_j \rangle+\overline{b})=1, j\in[n]\}.
\end{align*}
Here, we define support matrices satisfying 
$0<(-\overline{\lambda})_j<C$ as active support matrices. Notably, the number of support matrices far exceeds that of active support matrices. As shown in Figure \ref{fig:SM_ASM}, the proportion of support matrices relative to the total sample size ranges from 9\% to 86\%, whereas the proportion of active support matrices is significantly lower, ranging from 0\% to 3\%.
This raises a natural question: {\bf Can we design an algorithm whose main computational cost depends solely on the samples corresponding to the active support matrices?}

\begin{figure}
	\centering \includegraphics[width=0.9\textwidth]{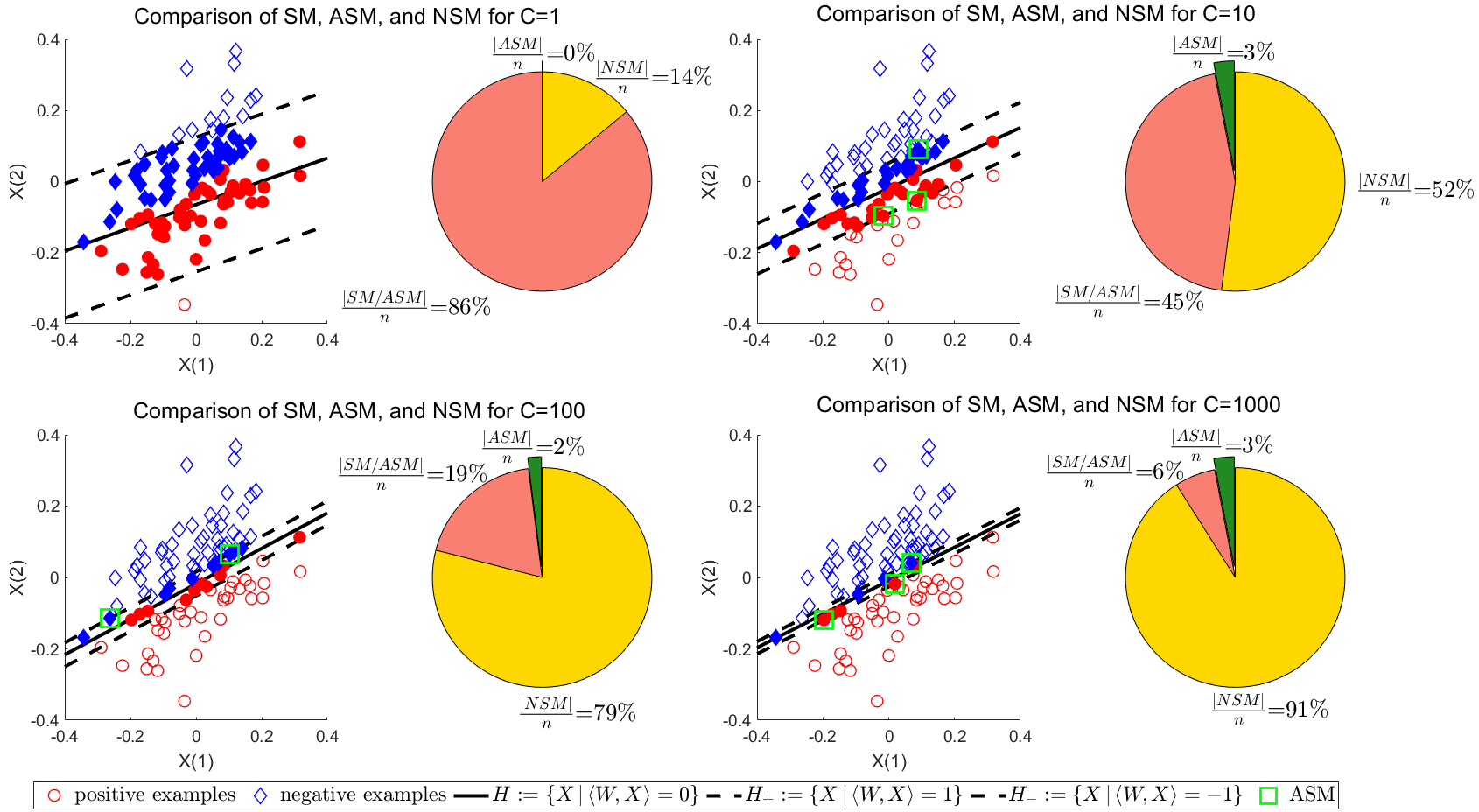}
	\caption{Comparison of SM, ASM, and NSM under ($n, p, q,\tau$)=(100, 2, 1, 0.1) (Here, NSM means the set of non-support matrix. Red circles represent positive examples, blue diamonds represent negative examples, and solid circles/diamonds indicate support matrices. Active support matrices are highlighted with a green square border. SM/ASM refers to the set of support matrices after excluding the active support matrices.
	We observe that (a) the cardinality of active support matrices is significantly smaller than that of support matrices; and (b) as the value of $C$ increases, both the number of support matrices and the optimal margin  $2/\|\overline{W}\|_{\F}$ decrease.)}
	\label{fig:SM_ASM}
\end{figure}

\subsection{Characterizing low-rank property of $\overline{W}$}\label{subsec:low-rank}

In addition to the sample sparsity of model (\ref{Model:SMM_eq2}), another key property is the low-rank structure of the solution $\overline{W}$. Indeed, since $(\overline{W},\overline{b},\overline{v},\overline{U},\overline{\lambda},\overline{\Lambda})$  is a solution to the KKT system (\ref{eq:KKT_system_SMM}),  one has that
$$
\overline{W}=\overline{U}=\mbox{Prox}_{\tau\|\cdot\|_*}(\overline{\Lambda}+\overline{U})=\mbox{Prox}_{\tau\|\cdot\|_*}(-\A^*\overline{\lambda}).
$$
Without loss of generality, we assume that $p\leq q$. And suppose $-\A^*\overline{\lambda}$ has the following singular value decomposition (SVD):
$$
-\A^*\overline{\lambda}=U[\mbox{Diag}(v)\ 0]V^{\top},
$$
where $U\in\mathcal{O}^p$, $V\in\mathcal{O}^q$, and $v:=(v_1,v_2,\ldots,v_p)\in\mathbb{R}^p$ with $v_1\geq v_2\geq\ldots\geq v_p$.
Furthermore, define 
$$
\bar{k}:=\max\{i\in[p]\ |\ v_i>\tau\}.
$$
By applying the proximal mapping $\mbox{Prox}_{\tau\|\cdot\|_*}(-\A^*\overline{\lambda})$ , as described in \cite{liu2012implementable}, we obtain
\begin{align*}
\overline{W}=&\mbox{Prox}_{\tau\|\cdot\|_*}(-\A^*\overline{\lambda})=U[\mbox{Diag}(\max\{v_1-\tau,0\},\ldots,\max\{v_p-\tau,0\})\ 0]V^{\top}\\
&=U[\mbox{ Diag}(v_1-\tau,\ldots,v_{\bar{k}}-\tau,0,\ldots,0)\ 0]V^{\top}.
\end{align*}
This implies that the rank of  $\overline{W}$ is $\bar{k}$. As illustrated in Figure \ref{fig:rank_W}, for sufficiently large $\tau$, $\bar{k}$ is non-increasing, potentially enhancing classification accuracy on the test set (denoted $\mbox{Accuracy}_{\mbox{test}}$ in (\ref{eq:accu_test})). Thus, the low-rank property of $\overline{W}$ may correspond to improved performance on the test set. Given this, {\bf how can an efficient algorithm be designed to leverage the low-rank structure of 
$\overline{W}$ when $\bar{k}$ is sufficiently small?}

\begin{figure}
	\centering \includegraphics[width=0.9\textwidth]{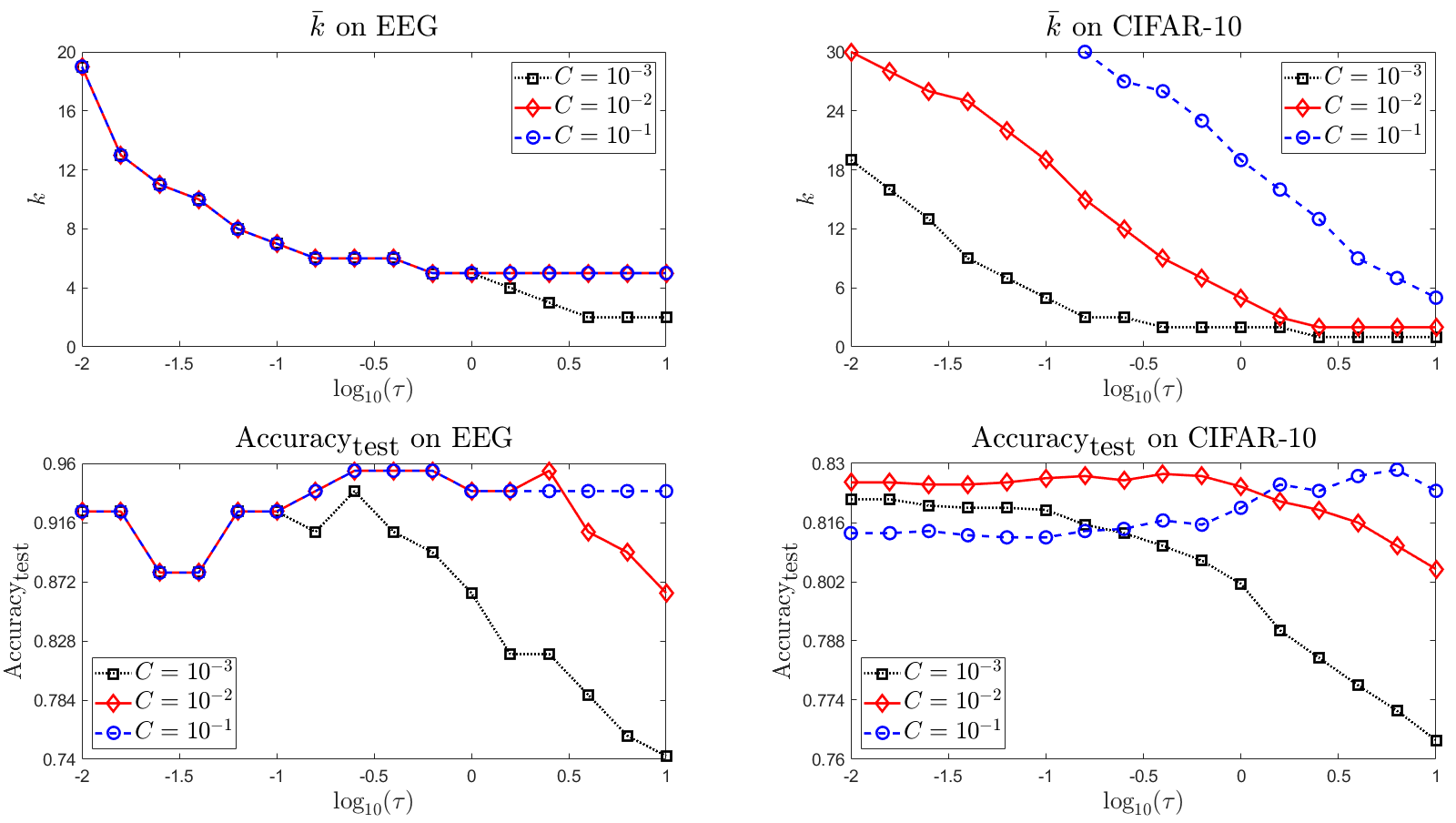}
	\caption{The changes of values of $\bar{k}$ and $\mbox{Accuracy}_{\mbox{test}}$ as the value of $\tau$ increases on EEG and CIFAR-10 real datasets}
	\label{fig:rank_W}
\end{figure}

\section{A semismooth Newton-CG based augmented Lagrangian method for the SMM model }\label{Sec:ALM-SNCG}
In this section, we present our algorithmic framework for solving the SMM model (\ref{Model:SMM}), which comprises an outer loop using the inexact augmented Lagrangian method (ALM) and inner loops employing the semismooth Newton-CG method for ALM subproblems.

\subsection{An inexact augmented Lagrangian method}\label{subsub:3.1}
We are going to develop an inexact augmented Lagrangian method for solving the problem (\ref{Model:SMM_eq2}).
Given a positive penalty parameter $\sigma$, the augmented Lagrangian function
$L_\sigma:\mathcal{X}\times\mathcal{Y}\rightarrow\mathbb{R}$ for (\ref{Model:SMM_eq2}) takes the form
\begin{equation}\label{eq:augmented Lagrangin funciton}
	\begin{aligned}
	L_{\sigma}(x;z)=&\frac{1}{2}\|W\|^2_\F+\tau\|U\|_{*}+\delta^*_{S}(v)+\langle \lambda,\A W+by+v-e_n \rangle+\langle \Lambda,W-U \rangle\\
	&+\frac{\sigma}{2}\|\A W+by+v-e_n\|^2+\frac{\sigma}{2}\|W-U\|^2_{\F},
	\end{aligned}
\end{equation}
where  $x=(W, b, v, U)\in \mathcal{X}$ and $z=(\lambda, \Lambda)\in \mathcal{Y}$.
Following the general framework in \cite{rockafellar1976augmented}, the steps of the ALM method is summarized in Algorithm \ref{alg_ALM_SMM} below.

\begin{algorithm}[htbp]
	\caption{An inexact augmented Lagrangian method}
	\vskip 0.05in
	\noindent
	{\bf Initialization:} Given are two positive parameters $\sigma_0$ and $\sigma_\infty$. Choose an initial point $z^0=(\lambda^0,\Lambda^0)\in\mathcal{Y}$. Set $k=0$. Perform the following steps in each iteration until a proper stopping criterion is satisfied:	
	\vskip 0.1in
	\begin{algorithmic}[0]
		\State
		{\bf Step 1}. Compute an approximate solution
		\begin{equation}\label{Model:ALM_SMM}
			(W^{k+1},b^{k+1},v^{k+1},U^{k+1})\approx\mbox{argmin}\left\{ f_k(x):=L_{\sigma_k}(x;z^k)\ |\ x\in\mathcal{X} \right\}.
		\end{equation}
		\State
		{\bf Step 2}. Update the multipliers
		$$
		(\lambda^{k+1},\Lambda^{k+1})=(\lambda^k+\sigma_k(\A W^{k+1}+b^{k+1}y+v^{k+1}-e_n),\Lambda^k+\sigma_k(W^{k+1}-U^{k+1})).
		$$
		\State
		{\bf Step 3}. Update $\sigma_{k+1}\uparrow\sigma_{\infty}\leq\infty$.
	\end{algorithmic}
	\label{alg_ALM_SMM}
\end{algorithm}

In Step 1 of Algorithm \ref{alg_ALM_SMM}, the subproblem (\ref{Model:ALM_SMM}) is solved inexactly.
Different inexact rules can be adopted. Here, we give two easily implementable inexact rules.
Observe that  $(\widehat{W},\widehat{b},\widehat{v},\widehat{U})$ is an optimal solution to the ALM subproblem (\ref{Model:ALM_SMM})
if and only if it satisfies
\begin{empheq}[left=\empheqlbrace]{align}
	&(\widehat{W},\widehat{b})\in\mbox{argmin}\left\{ \varphi_k(W,b)\ |\ (W,b)\in\mathbb{R}^{p\times q}\times\mathbb{R} \right\},\label{subprob:ALM_primal2}\\[2mm]
	&(\widehat{v}, \widehat{U})=\left( \sigma^{-1}_k\mbox{Prox}_{\delta^*_S}( \omega_k(\widehat{W},\widehat{b}) ), \sigma^{-1}_k\mbox{Prox}_{\tau\|\cdot\|_*}(X_k(\widehat{W})) \right),\label{subprob:ALM_primal_v_U}
\end{empheq}
where the function $\varphi_k: \mathbb{R}^{p\times q}\times\mathbb{R}\to \mathbb R$ is defined by
\begin{equation}\label{def:varphi}
	\begin{aligned}
	\varphi_k(W,b):=&(1/2)\|W\|^2_{\F}+(\sigma_k)^{-1}E_{\delta^*_{S}}\left( \omega_k(W,b) \right)-(2\sigma_k)^{-1}\|\lambda^k\|^2\\
	&+(\sigma_k)^{-1}E_{\tau\|\cdot\|_{*}}(X_k(W))-(2\sigma_k)^{-1}\|\Lambda^k\|^2_{\F}.
	\end{aligned}
\end{equation}
Here, $\omega_k:\mathbb{R}^{p\times q}\times\mathbb{R}\rightarrow\mathbb{R}^n$ and $X_k:\mathbb{R}^{p\times q}\rightarrow\mathbb{R}^{p\times q}$ are defined as
\begin{equation}\label{def:wk_X_k}
	\omega_k(W,b):=-\lambda^k-\sigma_k(\A W+b\,y-e_n)\quad \mbox{and}\quad X_k(W):=\Lambda^k+\sigma_k W.
\end{equation}
Additionally, $E_g(\cdot)$ and $\mbox{Prox}_g(\cdot)$ represent the Moreau envelop and proximal mapping of some proper closed convex function $g$, respectively.
Similar to those in \cite{cui2019r}, we adopt the following two easily implementable inexact rules  for (\ref{subprob:ALM_primal2}):
\begin{align*}
	(A)\quad & \|\nabla\varphi_k(W^{k+1},b^{k+1}) \|\leq\frac{\varepsilon^{\,2}_k/\sigma_k}{1+\|x^{k+1}\|+\|z^{k+1}\|}\min\left\{ 1/\chi_k,1 \right\},\\[2mm]
	(B)\quad & \|\nabla\varphi_k(W^{k+1},b^{k+1}) \|\leq\frac{(\eta^{\,2}_k/\sigma_k)\|z^{k+1}-z^k\|^2}{1+\|x^{k+1}\|+\|z^{k+1}\|}\min\left\{ 1/\chi_k,1 \right\}
\end{align*}
where  $\{\varepsilon_k\}$ and $\{\eta_k\}$ are two given positive summable sequences, $x^k=(W^k, b^k$, $v^k, U^k)$, $z^k=(\lambda^k, \Lambda^k)$, and $\chi_k=\|W^{k+1}\|_{\F}+\|z^{k+1}-z^k\|/\sigma_k+1/\sigma_k$.

Rockafellar's original work \cite{rockafellar1976augmented} demonstrated the asymptotic $Q$-superlinear convergence rate of the dual sequence produced by the ALM, assuming Lipschitz continuity of the dual solution mapping at the origin. However, this condition is challenging to satisfy for (\ref{Model:SMM_eq2}) due to the requirement of dual solution uniqueness. Therefore, we consider a weaker quadratic growth condition on the dual problem (\ref{Model:dual_SMM}) in this paper. The quadratic growth condition for (\ref{Model:dual_SMM}) at $\overline{z}:=(\overline{\lambda},\overline{\Lambda})\in\Omega_D$ is said to hold if there exist constants $\varepsilon>0$ and $\kappa>0$ such that
\begin{equation}\label{eq:qg condition_D}
	\Phi(z)\geq\Phi(\overline{z})+\kappa\,\mbox{dist}^2(z,\Omega_{D}), \quad \forall\ z:=(\lambda, \Lambda)\in F_D \cap \mathbb{B}_{\varepsilon}(\overline{z}).
\end{equation}
Here, the function $\Phi$ is defined in (\ref{Model:dual_SMM}),  $\Omega_D$ denotes the set of all optimal solutions to (\ref{Model:dual_SMM}), and
$
F_D:=\{(\lambda,\Lambda)\in\mathcal{Y}\ |\ y^{\top}\lambda=0, -\lambda\in S, \Lambda\in\mathbb{B}^{\tau}_2\}
$
represents the set of all feasible solutions to (\ref{Model:dual_SMM}). 

Denote $R^{kkt}:\mathcal{X}\times\mathcal{Y}\rightarrow\mathcal{X}\times\mathcal{Y}$ be the natural residual mapping as follows:
$$
R^{kkt}(x,z):=\left[
\begin{array}{c}
	W+\A^*\lambda+\Lambda\\
	y^{\top}\lambda\\
	v-\mbox{Prox}_{\delta^*_S}(v-\lambda)\\
	U-\mbox{Prox}_{\tau\|\cdot\|_*}(U+\Lambda)\\
	\A W+by+v-e_n\\
	W-U	
\end{array}
\right].
$$
The following theorem establishes the global convergence and asymptotic $R$-superlinear convergence of the KKT residuals in terms of $\|R^{kkt}(x^k,z^k)\|$ under criteria (A) and (B) for Algorithm \ref{alg_ALM_SMM}, which can be directly derived from \cite[Theorem 2]{cui2019r}.

\begin{theorem}\label{Thm:ALM}
	Let $\{(x^k,z^k)\}$ be an infinite sequence generated by Algorithm \ref{alg_ALM_SMM} under criterion ($A$)
	with $x^k:=(W^k, b^k, v^k, U^k)$ and $z^k:=(\lambda^k, \Lambda^k)$. Then the sequence $\{z^k\}$ converges to some $\overline{z}\in\Omega_D$.
	Moreover,  the sequence $\{x^k\}$ is  bounded with all of its accumulation points in the solution set of  (\ref{Model:SMM_eq2}).
	
	If in addition, criterion (B) is executed and the quadratic growth condition (\ref{eq:qg condition_D})
	holds at $\overline{z}$, then there exists $k'\geq0$ such that for all $k\geq k'$, $\beta\eta_k<1$ and
	\begin{subequations}
		\begin{align*}
			\mbox{dist}\,(z^{k+1},\Omega_D)\leq\theta_k\mbox{dist}\,(z^k,\Omega_{D}), \quad
			\|R^{kkt}(x^{k+1},z^{k+1})\|\leq\theta'_k\mbox{dist}\,(z^k,\Omega_D),
		\end{align*}
	\end{subequations}
	where 
	\begin{align*}
		\theta_k:=&\left[ \beta\eta_k+(\beta\eta_k+1)/\sqrt{1+\sigma^2_k\kappa^2} \right]/(1-\beta\eta_k)\rightarrow\theta_{\infty}:=1/\sqrt{1+\sigma^2_{\infty}\kappa^2},\\[2mm]
		\theta'_k:=&\left[ 1/\sigma_k+(\eta^2_k/\sigma_k)\|z^{k+1}-z^k\| \right]/(1-\beta\eta_k)\rightarrow\theta'_{\infty}:=1/\sigma_{\infty},\\[2mm]
		\beta:=&\sqrt{2\left[ 1+\tau\sqrt{p}+C\sqrt{n}+\gamma(\sqrt{n}+1)+2\gamma^2 \right]}\quad \mbox{with some}\quad \gamma\geq 1.
	\end{align*}
	Moreover, $\theta_{\infty}=\theta'_{\infty}=0$ if $\sigma_{\infty}=+\infty$.
\end{theorem}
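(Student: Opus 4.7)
The plan is to recast Algorithm~\ref{alg_ALM_SMM} as an inexact proximal point algorithm (PPA) applied to the dual problem (\ref{Model:dual_SMM}) via Rockafellar's classical equivalence, and then to invoke the abstract convergence framework of Cui, Sun, and Toh \cite{cui2019r}. Concretely, the proximal subproblem for the maximal monotone operator $-\partial(-\Phi)$ on the affine dual feasible set coincides (by strong duality in $x$ for fixed $z$) with the inner ALM minimization (\ref{Model:ALM_SMM}), and the multiplier update in Step 2 is precisely the resulting proximal step in $z^k$. Under this identification, criteria (A) and (B) reduce to Rockafellar's standard inexact stopping rules, so the abstract theorem of \cite{cui2019r} becomes directly applicable once its hypotheses are verified for our specific problem.

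For the first half (global convergence under criterion (A)), the main tasks are: (i) to record that the dual solution set $\Omega_D$ is nonempty, closed and convex, which has already been noted before Definition~1 via \cite[Corollary 28.3.1]{rockafellar1970convex} and the compactness of the primal optimum set; (ii) to verify level-boundedness of $x\mapsto L_{\sigma_k}(x;z^k)$, which follows from strong convexity in $W$, coercivity contributed by $\tau\|U\|_*$, and the linear constraints that pin down $b$ and $v$; and (iii) to check that the inexactness tolerance in (A) matches the summable-error condition of \cite[Thm.~2]{cui2019r}. Together these yield convergence of $\{z^k\}$ to some $\overline z\in\Omega_D$; the boundedness of $\{x^k\}$ and the fact that every accumulation point is primal optimal then follow by standard Moreau-envelope duality combined with the stopping rule (A).

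For the second half (asymptotic $R$-superlinear rate under criterion (B) and (\ref{eq:qg condition_D})), the quadratic growth condition plays the role of the Lipschitz-type assumption of Rockafellar: by a Luque-type argument it implies a calmness bound for the inverse of the dual KKT operator at the origin, which under the PPA iteration yields the contraction factor $\theta_k=[\beta\eta_k+(\beta\eta_k+1)/\sqrt{1+\sigma_k^2\kappa^2}]/(1-\beta\eta_k)$ and, upon $\sigma_k\uparrow\sigma_\infty$ and $\eta_k\to 0$, gives the limiting rate $\theta_\infty$. The KKT-residual estimate via $\theta_k'$ is then obtained by bounding $\|R^{kkt}(x^{k+1},z^{k+1})\|$ in terms of $\|\nabla\varphi_k(W^{k+1},b^{k+1})\|$ and $\|z^{k+1}-z^k\|$, invoking criterion (B), and using nonexpansiveness of $\mathrm{Prox}_{\delta^*_S}$ and $\mathrm{Prox}_{\tau\|\cdot\|_*}$. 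The main obstacle is identifying the explicit form of the constant $\beta=\sqrt{2[1+\tau\sqrt p+C\sqrt n+\gamma(\sqrt n+1)+2\gamma^2]}$: one must track how the diameters of $S=[0,C]^n$ and $\mathbb{B}_2^\tau$ (contributing $C\sqrt n$ and $\tau\sqrt p$) and a uniform a~priori bound $\gamma$ on the iterates propagate through the Lipschitz estimates of the residual block $(U-\mathrm{Prox}_{\tau\|\cdot\|_*}(U+\Lambda),\,v-\mathrm{Prox}_{\delta^*_S}(v-\lambda))$. This is mechanical but delicate, and constitutes the only step where the SMM-specific structure materially enters the otherwise template-driven argument.
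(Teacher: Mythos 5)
Your proposal follows essentially the same route as the paper: the authors prove nothing beyond stating that the theorem ``can be directly derived from [Theorem 2]'' of Cui, Sun, and Toh \cite{cui2019r}, i.e., by viewing the ALM as an inexact dual proximal point method whose stopping rules (A)/(B) and quadratic growth condition match that framework's hypotheses. Your additional verification steps (nonemptiness of $\Omega_D$, level-boundedness of the subproblems, and tracking the constant $\beta$ through the Lipschitz estimates of the KKT residual blocks) are exactly what a ``direct derivation'' from that reference would require, so the two arguments coincide.
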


The following proposition gives a sufficient condition for the quadratic growth condition (\ref{eq:qg condition_D})
with respect to the dual problem (\ref{Model:dual_SMM}). Here, we assume without loss of generality that $0<p\leq q$. Detailed proof is  given in Appendix \ref{Appendix:Prof_Prop_sufficient_condition}.

\begin{proposition}\label{Prop:sufficient_condition_quad_growth_cond}
	Let $\overline{z}:=(\overline{\lambda},\overline{\Lambda})\in\Omega_D$. Assume that there exists $(\widetilde{\lambda},\widetilde{\Lambda})\in\Omega_{D}$ such that
	\begin{equation}\label{eq:sufficient_condition_quatratic_growth}
		\mbox{rank}\,(-\overline{\Upsilon})+\mbox{rank}\,(\tau\overline{I}-\widetilde{\Lambda})=p\ \mbox{with}\ \overline{I}:=\overline{U}[I_p\ \,0_{p\times(q-p)}]\overline{V}^{\top},
	\end{equation}
	where $\overline{\Upsilon}:=\A^*\overline{\lambda}+\overline{\Lambda}$, and the SVD of $\widetilde{\Lambda}$ is
	\begin{equation}\label{eq:svd_Lambda_simple} \widetilde{\Lambda}=\overline{U}[\overline{\Sigma}(\widetilde{\Lambda})\ 0]\overline{V}^{\top}\ \mbox{with}\ \overline{U}\in\mathcal{O}^p\ \mbox{and}\ \overline{V}\in\mathcal{O}^q.
	\end{equation}
	Then the quadratic growth condition (\ref{eq:qg condition_D}) at $\overline{z}$ holds for the dual problem (\ref{Model:dual_SMM}).
\end{proposition}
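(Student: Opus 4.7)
The plan is to invoke the well-known equivalence between the quadratic growth of a convex function at a minimizer and the metric subregularity of its subdifferential at the corresponding point (as established by Arag\'on Artacho--Geoffroy and Drusvyatskiy--Lewis). Letting $\hat\Phi := \Phi + \delta_{\{y^\top\lambda = 0\}}$, so that $\Omega_D = \arg\min\hat\Phi$, the quadratic growth condition (\ref{eq:qg condition_D}) at $\overline z$ will follow once I show that $\partial\hat\Phi$ is metrically subregular at $(\overline z, 0)$, i.e.\ that a local error bound $\mathrm{dist}(z,\Omega_D)\leq \kappa^{-1}\mathrm{dist}(0,\partial\hat\Phi(z))$ holds.

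To verify this error bound, I would exploit the composite structure of $\Phi$: a smooth convex quadratic $\tfrac12\|\A^*\lambda+\Lambda\|^2_{\F}+\langle\lambda,e_n\rangle$ plus the indicator of the product set $S\times\mathbb{B}^{\tau}_2$. The box $S=[0,C]^n$ and the hyperplane $\{y^\top\lambda=0\}$ are polyhedral and pose no difficulty; the burden falls on the spectral-norm ball $\mathbb{B}^{\tau}_2$, which is $C^2$-cone reducible in the sense of Bonnans--Shapiro. For such matrix cone constraints I would apply the Sun--Toh framework (as in Cui--Sun--Toh and Ding--Sun--Zhang), in which quadratic growth is obtained from constraint nondegeneracy plus a strict complementarity condition at some point of the solution set. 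The hypothesis (\ref{eq:sufficient_condition_quatratic_growth}) is precisely this strict complementarity: the KKT system (\ref{eq:KKT_system_SMM}) gives $-\overline\Upsilon=\overline W=\overline U$, so the first rank counts the rank of the primal solution, while $\mathrm{rank}(\tau\overline I-\widetilde\Lambda)$ measures the strict-slack rank of $\|\Lambda\|_2\leq\tau$ at the auxiliary dual solution $\widetilde\Lambda$. Because $\widetilde\Lambda$ is diagonalized by the same $\overline U,\overline V$ appearing in $\overline I$ via (\ref{eq:svd_Lambda_simple}), the rank sum being $p$ says that at every singular direction either $\overline W$ is active in the nuclear-norm term or the spectral-norm ball has strict slack, exactly the expected strict complementarity for the nuclear/spectral-norm pairing.

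With strict complementarity at $(\widetilde\lambda,\widetilde\Lambda)$ in hand, the critical cone of $\mathbb{B}^{\tau}_2$ at $\widetilde\Lambda$ reduces to a linear subspace described through the SVD in (\ref{eq:svd_Lambda_simple}), and the second-order sufficient optimality condition for (\ref{Model:dual_SMM}) becomes checkable by a direct computation on the quadratic term, yielding the desired subregularity modulo $\Omega_D$. The allowance of an auxiliary dual point $\widetilde z\neq\overline z$ in (\ref{eq:sufficient_condition_quatratic_growth}) is harmless since $\A^*\lambda+\Lambda\equiv -\overline W$ is invariant across $\Omega_D$ and quadratic growth is a property of the entire solution set; strict complementarity at any one dual solution therefore suffices. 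The principal obstacle I anticipate is the non-polyhedral character of $\mathbb{B}^{\tau}_2$: controlling the curvature contribution of the spectral-norm indicator through its second-order tangent set, and tracking how perturbations of $\lambda$ along the polyhedral directions of $S\cap\{y^\top\lambda=0\}$ couple to perturbations of $\Lambda$ through the quadratic $\|\A^*\lambda+\Lambda\|^2_{\F}$, require delicate bookkeeping on the SVD of $\widetilde\Lambda$ and on the block structure of $\partial\Pi_{\mathbb{B}^{\tau}_2}$; this is where I would expect the technical work of the proof to concentrate.
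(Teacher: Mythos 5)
Your opening and closing moves are the right ones and coincide with the paper's: the quadratic growth condition (\ref{eq:qg condition_D}) is indeed obtained from a local error bound for $\Omega_D$ via the Arag\'on Artacho--Geoffroy characterization (the paper cites exactly this, together with a result of Cui et al.\ that assembles the pieces), and your observation that $\A^*\lambda+\Lambda\equiv-\overline{\Upsilon}$ is invariant over $\Omega_D$ is also used there. The gap is in the middle, where you propose to verify the error bound through the ``constraint nondegeneracy $+$ strict complementarity $\Rightarrow$ second-order sufficient condition'' machinery. That route does not close here: the paper introduces the quadratic growth condition precisely because the dual solution set is generally \emph{not} a singleton, and a second-order sufficient condition at $\overline{z}$ (or constraint nondegeneracy for (\ref{Model:dual_SMM})) would force $\overline{z}$ to be an isolated optimal solution, which is not implied by the hypothesis. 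Your claim that ``strict complementarity at any one dual solution therefore suffices'' is asserted rather than proved, and it is exactly the point where the actual argument must live. Likewise, the anticipated ``curvature contribution of the spectral-norm indicator through its second-order tangent set'' is a red herring: the proof needs no second-order tangent sets at all.

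What the paper does instead, and what is missing from your plan, is a set-intersection argument. It writes $\Omega_D=\V_{D}\cap\G^1_{D}\cap\G^2_{D}(\overline{b})$, where $\V_{D}=\{\A^*\lambda+\Lambda=\overline{\Upsilon}\}$ and $\G^1_{D}=\{y^{\top}\lambda=0\}$ are affine, the $\lambda$-component of $\G^2_{D}(\overline{b})$ is polyhedral, and the only non-polyhedral piece is $\partial\tau\|-\overline{\Upsilon}\|_*$. The key use of hypothesis (\ref{eq:sufficient_condition_quatratic_growth}) is then an SVD computation showing that the rank identity is \emph{equivalent} to $\widetilde{\Lambda}\in\mbox{ri}\bigl(\partial\tau\|-\overline{\Upsilon}\|_*\bigr)$; by the Bauschke--Borwein--Li criterion (polyhedral sets plus one convex set meeting the intersection in its relative interior), this yields bounded linear regularity of the collection. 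Combining that with metric subregularity of $\partial\delta_{S\times\mathbb{B}^{\tau}_2}$ --- piecewise polyhedrality for the box $S$, and the known subregularity result for the spectral-norm ball --- gives the error bound and hence (\ref{eq:qg condition_D}). Your proposal correctly isolates the spectral-norm ball as the hard ingredient and correctly reads (\ref{eq:sufficient_condition_quatratic_growth}) as strict complementarity, but without the ``rank condition $\Leftrightarrow$ relative-interior membership $\Rightarrow$ bounded linear regularity'' step, the argument does not go through.
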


Finally, condition (\ref{eq:sufficient_condition_quatratic_growth}) can be interpreted as a strict complementarity condition.
Indeed, if $(\overline{W}, \overline{b}, \overline{v}, \overline{U}, \overline{\lambda}, \overline{\Lambda})$ satisfies
the KKT system (\ref{eq:KKT_system_SMM}), then $\overline{W}=-(\A^*\overline{\lambda}+\overline{\Lambda})=-\overline{\Upsilon}$.
Consequently, the condition (\ref{eq:sufficient_condition_quatratic_growth}) means that there exists $(\widetilde{\lambda},\widetilde{\Lambda})\in\Omega_{D}$ such that
\begin{equation}\label{eq:rank(W)_rank(tauI-Lambda)_p}
	\mbox{rank}\,(\overline{W})+\mbox{rank}\,(\tau\overline{I}-\widetilde{\Lambda})=p.
\end{equation}
Moreover, by the use of  \cite[Proposition 10]{zhou2017unified} and  (\ref{eq:tauI_tildeLambda}), we can deduce
$\langle \overline{W},\tau\overline{I}-\widetilde{\Lambda} \rangle=0$. In this sense, equality
(\ref{eq:rank(W)_rank(tauI-Lambda)_p}) is regarded as a strict complementarity between the matrices $\overline{W}$
and $\tau\overline{I}-\widetilde{\Lambda}$. In particular,
(\ref{eq:rank(W)_rank(tauI-Lambda)_p}) retains true if $\mbox{rank}\,(\overline{W})=p$ and $\tau=0$, 
a condition inherently satisfied in the context of the soft margin support vector machine model \cite{cortes1995support}.

\subsection{A semismooth Newton-CG method to solve the subproblem (\ref{subprob:ALM_primal2})}
In Algorithm \ref{alg_ALM_SMM}, the primary computational cost arises from solving the convex subproblem (\ref{subprob:ALM_primal2}).
In this subsection, we propose a semismooth Newton-CG method to calculate an inexact solution to this subproblem.


Due to the convexity of   $\varphi_k(\cdot,\cdot)$,
solving subproblem (\ref{subprob:ALM_primal2}) is equivalent to solving the following nonlinear equation:
\begin{equation}\label{eq:gradphi}
	\nabla\varphi_k(W,b)=\left[ \begin{array}{c}
		W-\A^*\Pi_{S}\left( \omega_k(W,b) \right)+\Pi_{\mathbb{B}^{\tau}_2}(X_k(W))\\[2mm]
		-y^{\top}\Pi_{S}\left( \omega_k(W,b) \right)
	\end{array} \right]=0,
\end{equation}
where $\omega_k(W,b)$ and $X_k(W)$ are defined in (\ref{def:wk_X_k}). Notice that $\nabla\varphi_k(\cdot,\cdot)$ is not smooth but strongly semismooth (see, e.g., \cite[Proposition 7.4.4]{FP2007} and \cite[Theorem 2.3]{kaifeng2011algorithms}).
 It is then desirable to solve using the semismooth Newton method.
The semismooth Newton method has been extensively studied. Under some regularity conditions,
the method can be superlinearly/quadratically convergent, see, e.g., \cite{qi1993nonsmooth,qi2006quadratically,ZST2010}.

In what follows,
we construct an alternative set for the Clarke generalized Jacobian $\partial^2\varphi_k(W,b)$ of $\nabla\varphi_k$ at $(W,b)$, which is more computationally tractable. Define for $(W, b)\in\mathbb{R}^{p\times q}\times\mathbb{R}$,
$$
\widehat{\partial}^{\,2}\varphi_k(W,b):=\left[
\begin{array}{ll}
	\I +\sigma_k\partial\Pi_{\mathbb{B}^\tau_2}(X_k(W)) & {}\\
	{} & 0
\end{array}
\right]+\sigma_k\left[
\begin{array}{l}
	\A^*\\
	y^{\top}
\end{array}
\right]\partial\Pi_S(\omega_k(W,b))[\A\quad y].
$$
It follows from \cite[Proposition 2.3.3 and Theorem 2.6.6]{clarke1990optimization} that
$$
{\partial}^{\,2}\varphi_k(W,b)(d_W,d_y)\subseteq\widehat{\partial}^{\,2}\varphi_k(W,b)(d_W,d_y), \quad \forall\ (d_W,d_y)\in\mathbb{R}^{p\times q}\times\mathbb{R}.
$$
The element $\V\in \widehat{\partial}^{\,2}\varphi_k(W,b)$ takes the following form
\begin{equation}\label{eq:Hession_phi}
	\V=\left[
	\begin{array}{ll}
		\I+\sigma_k\G & {}\\
		{} & 0
	\end{array}
	\right]+\sigma_k\left[
	\begin{array}{l}
		\A^*\\
		y^{\top}
	\end{array}
	\right]M[\A \quad y],
\end{equation}
where $\G\in\partial\Pi_{\mathbb{B}^{\tau}_2}(X_k(W))$,  $M\in\partial\Pi_S(\omega_k(W, b))$, and $\I$ is the identity operator from $\mathbb{R}^{p\times q}$ to $\mathbb{R}^{p\times q}$.

The steps of the  semismooth Newton-CG method for solving (\ref{subprob:ALM_primal2}) are stated in Algorithm \ref{alg_SSNCG}.
\begin{algorithm}[htbp]
	\caption{A semismooth Newton-CG method for subproblem (\ref{subprob:ALM_primal2}) }
	\vskip 0.05in
	\noindent
	{\bf Initialization:} Choose positive scalars $\mu\in(0,1/2)$, $\bar{\eta}, \tau_1, \tau_2, \delta\in(0,1)$, $\varrho\in(0,1]$ and an initial point $(W^{k,0}, b^{k,0} )\in\mathbb{R}^{p\times q}\times\mathbb{R}$. Set $i=0$. Execute the following steps until the stopping criteria ($A$) and/or ($B$) at $(W^{k,i+1},b^{k,i+1})$ are satisfied.	
	\vskip 0.1in
	\begin{algorithmic}[0]
		\State {\bf Step 1 (finding the semismooth Newton direction)}. Choose $\G\in\partial\Pi_{\mathbb{B}^\tau_2}(X_k(W^{k,i}))$ and $M\in\partial\Pi_S(\omega_k(W^{k,i},b^{k,i}))$. Let $\V_i:=\V$ be given as in (\ref{eq:Hession_phi})
		with $W=W^{k,i}$, $b=b^{k,i}$, and $\rho_i=\tau_1\min\{\tau_2, \|\nabla\varphi_k(W^{k,i},b^{k,i})\| \}$. Apply the conjugate gradient (CG) algorithm to find an approximate solution $(d^{\,i}_W, d^{\,i}_b)\in\mathbb{R}^{p\times q}\times\mathbb{R}$ of the following linear equation
		\begin{equation}\label{eq:Newton_system}
			\V_i(d_W, d_b)+\rho_i(0,d_b)=-\nabla\varphi_k(W^{k,i},b^{k,i})
		\end{equation}
		such that
		$$
		\|\V_i(d^{\,i}_W,d^{\,i}_b)+\rho_i(0,d^{\,i}_b)+\nabla\varphi_k(W^{k,i},b^{k,i})\|\leq\min(\bar{\eta}, \|\nabla\varphi_k(W^{k,i},b^{k,i})\|^{1+\varrho}).
		$$
		\State {\bf Step 2 (line search)}. Set $\alpha_i=\delta^{m_i}$, where $m_i$ is the first nonnegative integer $m$ for which
		$$
		\varphi_k(W^{k,i}+\delta^m d^{\,i}_W, b^{k,i}+\delta^m d^{\,i}_b)\leq\varphi_k(W^{k,i},b^{k,i}) +\mu\,\delta^m\left\langle \nabla\varphi_k(W^{k,i},b^{k,i}),(d^{\,i}_W,d^{\,i}_b) \right\rangle.	
		$$
		\State {\bf Step 3}. Set $W^{k,i+1}=W^{k,i}+\alpha_i d^{\,i}_W$, $b^{k,i+1}=b^{k,i}+\alpha_i d^{\,i}_b$, and $i\leftarrow i+1$.
	\end{algorithmic}
	\label{alg_SSNCG}
\end{algorithm}


In our semismooth Newton method, the positive definiteness of $\V_i$ in the Newton linear system (\ref{eq:Newton_system}) is crucial for ensuring the superlinear convergence of the method, as discussed in \cite{ZST2010,li2018qsdpnal}.
In what follows, we show that the positive definiteness of $\V_i$ is equivalent to
the constraint nondegenerate condition from \cite{shapiro2003sensitivity}, which simplifies to the nonemptiness of a specific index set.
Here, the constraint nondegenerate condition associated with an optimal solution $(\widehat{\lambda},\widehat{\Lambda})$ of the dual problem for (\ref{Model:ALM_SMM})
can be expressed as (see \cite{shapiro2003sensitivity} )
\begin{equation}\label{eq:constraint_nondegenerate}
	y^{\top}\mbox{lin}\left(T_S(-\widehat{\lambda})\right)=\mathbb{R}.
\end{equation}

\begin{proposition}\label{Thm:constraint_nondegenerate}
	Let $(\widehat{W},\widehat{b})$ be an optimal solution for subproblem (\ref{subprob:ALM_primal2}). Denote $\widehat{\lambda}:=-\Pi_S( \omega_k(\widehat{W},\widehat{b}))$ and $\J_1(-\widehat{\lambda}):=\{j\in[n]\ |\ 0<(-\widehat{\lambda})_{j}<C\}$,
	where $\omega_k(\widehat{W},\widehat{b})$ is defined in (\ref{def:wk_X_k}). Then the following conditions are equivalent:\\
	(i) The constraint nondegenerate condition (\ref{eq:constraint_nondegenerate}) holds at $\widehat{\lambda}$;\\
	(ii) The index set $\J_1(-\widehat{\lambda})$ is nonempty;\\
	(iii) Every element in $\widehat{\partial}^{\,2}\varphi_k(\widehat{W},\widehat{b})$ is self-adjoint and positive definite.
\end{proposition}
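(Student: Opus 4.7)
The plan is to show (i)$\Leftrightarrow$(ii) via an explicit computation of the lineality space of $T_S(-\widehat{\lambda})$, and then (ii)$\Leftrightarrow$(iii) via a quadratic form analysis of $\V$, exploiting the separable structure of $S=[0,C]^n$.

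For (i)$\Leftrightarrow$(ii), I would use that $S=[0,C]^n$ is a product of intervals, so $T_S(-\widehat{\lambda})=\prod_{j=1}^n T_{[0,C]}((-\widehat{\lambda})_j)$. On each coordinate, the tangent cone is $[0,\infty)$ if $(-\widehat{\lambda})_j=0$, $(-\infty,0]$ if $(-\widehat{\lambda})_j=C$, and all of $\mathbb{R}$ if $0<(-\widehat{\lambda})_j<C$. Taking the lineality, the first two cases collapse to $\{0\}$, while the third stays $\mathbb{R}$. Hence $\mbox{lin}(T_S(-\widehat{\lambda}))=\{d\in\mathbb{R}^n:d_j=0\ \forall j\notin\J_1(-\widehat{\lambda})\}$, and $y^{\top}\mbox{lin}(T_S(-\widehat{\lambda}))$ equals $\mathbb{R}$ iff some $y_j$ with $j\in\J_1(-\widehat{\lambda})$ is nonzero, which since $y_j\in\{\pm1\}$, is equivalent to $\J_1(-\widehat{\lambda})\ne\emptyset$.

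For the self-adjointness part of (iii), I would rely on standard facts: the Clarke Jacobian of the metric projection onto a closed convex set consists of self-adjoint, positive semidefinite operators. Concretely, $M\in\partial\Pi_S(\omega_k(\widehat W,\widehat b))$ is a diagonal matrix $\mbox{Diag}(m)$ with $m_j\in[0,1]$ (forced to $1$ when $0<\omega_j<C$, to $0$ when $\omega_j\notin[0,C]$, and free in $[0,1]$ on the boundaries), while $\G\in\partial\Pi_{\mathbb{B}^\tau_2}(X_k(\widehat W))$ is self-adjoint positive semidefinite by the spectral operator framework in \cite{kaifeng2011algorithms}. Consequently every $\V$ in (\ref{eq:Hession_phi}) is self-adjoint, and the quadratic form splits as
\[
\langle(d_W,d_b),\V(d_W,d_b)\rangle=\|d_W\|_{\F}^2+\sigma_k\langle d_W,\G d_W\rangle+\sigma_k\bigl\langle M^{1/2}(\A d_W+d_b y),M^{1/2}(\A d_W+d_b y)\bigr\rangle,
\]
which is $\geq 0$.

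For (ii)$\Rightarrow$(iii), suppose the displayed form vanishes for some $(d_W,d_b)$. Then $\|d_W\|_{\F}^2=0$ forces $d_W=0$, reducing the last term to $\sigma_k d_b^2\langle y,My\rangle=\sigma_k d_b^2\sum_{j}m_j$. If $\J_1(-\widehat\lambda)\ne\emptyset$, pick any $j^*\in\J_1(-\widehat\lambda)$; since $0<\omega_{j^*}<C$, the corresponding $m_{j^*}=1$ is forced, so $\sum_j m_j\geq 1$ and $d_b=0$, giving positive definiteness of every $\V$. Conversely, for (iii)$\Rightarrow$(ii), if $\J_1(-\widehat\lambda)=\emptyset$, then for every index $j$ the value $m_j=0$ is admissible (either forced or allowed by the boundary case), so $M=0_{n\times n}\in\partial\Pi_S(\omega_k(\widehat W,\widehat b))$. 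The resulting $\V$ then annihilates $(0,1)$, contradicting positive definiteness. The main delicate point will be the freedom in the Clarke Jacobian on the boundary of $[0,C]$: one has to be careful to choose $M=0$ as a legitimate element of $\partial\Pi_S$ whenever $\J_1(-\widehat\lambda)$ is empty, which is permitted precisely because the only indices at which $m_j$ is forced to be $1$ are those in $\J_1(-\widehat\lambda)$.
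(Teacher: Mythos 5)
Your proposal is correct and follows essentially the same route as the paper's proof: the same coordinate-wise tangent-cone/lineality computation for (i)$\Leftrightarrow$(ii), the same diagonal description of $\partial\Pi_S$, and the same observation that positive definiteness of $\V$ reduces to $y^{\top}My>0$, with the zero matrix being an admissible $M$ exactly when $\J_1(-\widehat{\lambda})=\emptyset$. The only difference is cosmetic: the paper cites the equivalence between positive definiteness of $\V$ and $y^{\top}My>0$ from an external reference, whereas you establish it inline via the quadratic-form decomposition, which makes your argument slightly more self-contained.
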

\begin{proof}
	See Appendix \ref{Appendix:Prof_constraint_nondegenrate}. \qed
\end{proof}


We conclude this section by giving the convergence theorem of Algorithm \ref{alg_SSNCG}.
It can be proved in a way similar to those in \cite[Proposition 3.1]{qi1993nonsmooth} and \cite[Theorem 3.5]{ZST2010}.
\begin{theorem}\label{Thm:SNCG_convergence}
	The sequence $\{(W^{k,i},b^{k,i})\}_{i\geq0}$  generated by Algorithm \ref{alg_SSNCG} is bounded.
	Moreover, every accumulation point is an optimal solution to  (\ref{subprob:ALM_primal2}).
	If at some accumulation point $(\overline{W}^{k+1},\overline{b}^{k+1})$, the constraint nondegeneracy condition (\ref{eq:constraint_nondegenerate}) holds with
	$\overline{\lambda}^{k+1}:=-\Pi_{S}(\omega_k(\overline{W}^{k+1},\overline{b}^{k+1}))$. Then the whole sequence $\{(W^{k,i},b^{k,i})\}$ converges to $(\overline{W}^{k+1},\overline{b}^{k+1})$ and
	$$
	\|(W^{k,i+1},b^{k,i+1})-(\overline{W}^{k+1},\overline{b}^{k+1})\|=O\left(\|(W^{k,i},b^{k,i})-(\overline{W}^{k+1},\overline{b}^{k+1})\|^{1+\varrho}\right).
	$$
\end{theorem}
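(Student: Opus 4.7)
The plan is to establish the three claims in sequence: boundedness of the iterates, optimality of every accumulation point, and local superlinear convergence under the constraint nondegeneracy hypothesis.

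First, I would verify that $\varphi_k$ is coercive on $\mathbb{R}^{p\times q}\times\mathbb{R}$. The term $(1/2)\|W\|^2_{\F}$ provides coercivity in $W$, while the structure of $\omega_k(W,b)=-\lambda^k-\sigma_k(\A W+by-e_n)$ combined with the standing assumption that $y$ has both $+1$ and $-1$ entries guarantees $E_{\delta^*_S}(\omega_k(W,b))\to +\infty$ as $|b|\to\infty$ with $W$ on any bounded set (indeed, components of $-\sigma_k by$ diverge to $+\infty$ for either sign of $b$, making the linear part of $\delta^*_S$ blow up). Hence $\varphi_k$ is level-bounded, and the Armijo descent enforced in Step 2 of Algorithm \ref{alg_SSNCG} confines $\{(W^{k,i},b^{k,i})\}$ to a bounded sublevel set.

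Second, to identify accumulation points with optimal solutions of (\ref{subprob:ALM_primal2}), I would establish a uniform descent property for the CG directions. Firm nonexpansiveness of $\Pi_{\mathbb{B}^{\tau}_2}$ and $\Pi_{S}$ makes every $\G\in\partial\Pi_{\mathbb{B}^{\tau}_2}(X_k(W^{k,i}))$ and every $M\in\partial\Pi_S(\omega_k(W^{k,i},b^{k,i}))$ positive semidefinite, so the representation (\ref{eq:Hession_phi}) gives $\V_i\succeq 0$; the added $\rho_i>0$ penalty on the $b$-component in (\ref{eq:Newton_system}) then renders the full system operator positive definite, with operator norm uniformly bounded on the sublevel set. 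The CG inexactness bound forces $\langle \nabla\varphi_k(W^{k,i},b^{k,i}),(d^{\,i}_W,d^{\,i}_b)\rangle\leq -c\,\|\nabla\varphi_k(W^{k,i},b^{k,i})\|^2$ for a constant $c>0$, and a Zoutendijk-type analysis of the Armijo line search yields $\nabla\varphi_k(W^{k,i},b^{k,i})\to 0$. Convexity of $\varphi_k$ then promotes any accumulation point to an optimal solution of (\ref{subprob:ALM_primal2}).

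Third, for local superlinear convergence, I would apply Proposition \ref{Thm:constraint_nondegenerate} at the designated accumulation point $(\overline{W}^{k+1},\overline{b}^{k+1})$ to conclude that every element of $\widehat{\partial}^{\,2}\varphi_k(\overline{W}^{k+1},\overline{b}^{k+1})$ is positive definite. Upper semicontinuity of the Clarke generalized Jacobian extends this uniform positive definiteness to a neighborhood of $(\overline{W}^{k+1},\overline{b}^{k+1})$, so the regularized Newton operator in (\ref{eq:Newton_system}) admits a uniformly bounded inverse along the subsequence approaching $(\overline{W}^{k+1},\overline{b}^{k+1})$. Combined with the strong semismoothness of $\nabla\varphi_k$ (noted after (\ref{eq:gradphi})) and the CG residual tolerance of order $\|\nabla\varphi_k\|^{1+\varrho}$ in Step 1, the standard semismooth Newton error recursion delivers the rate $1+\varrho$ for the pure Newton update. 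The main obstacle will be verifying that the Armijo line search eventually accepts the unit step $\alpha_i=1$: this requires a careful semismooth expansion of $\varphi_k$ about $(W^{k,i},b^{k,i})$ to bound the gap between the actual decrease and the quadratic model decrease, and then exploiting $\mu<1/2$ together with the superlinear shrinkage of $\|(d^{\,i}_W,d^{\,i}_b)\|$ to verify the Armijo sufficient-decrease condition. Once unit steps are taken near $(\overline{W}^{k+1},\overline{b}^{k+1})$, local contraction upgrades subsequence convergence to whole-sequence convergence, and the displayed estimate is precisely the pure-Newton bound already established.
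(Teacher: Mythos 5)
Your outline is correct and follows essentially the same route the paper intends: the paper gives no proof of its own but defers to \cite[Proposition 3.1]{qi1993nonsmooth} and \cite[Theorem 3.5]{ZST2010}, and your three-step argument (level-boundedness of $\varphi_k$ from the quadratic term in $W$ plus the presence of both labels for coercivity in $b$; descent directions from the positive definiteness of $\V_i+\rho_i(0,\cdot)$ and a Zoutendijk-type argument; then constraint nondegeneracy via Proposition \ref{Thm:constraint_nondegenerate}, outer semicontinuity of the generalized Jacobian, strong semismoothness, and eventual acceptance of unit steps) is precisely the argument of those references. No gaps worth flagging.
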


\section{Implementation of the semismooth Newton-CG method}\label{Sec:linear_system}

As previous mentioned, our focus is on situations where the sample size $n$ greatly exceeds the maximum of $p$ and $q$, particularly when $n\gg pq$. As a result, the main computational burden in each iteration of Algorithm \ref{alg_SSNCG} is solving the following Newton linear system:
\begin{equation}\label{eq:Newton_simple1}
	\V(d_W,d_b)+\rho(0,d_b)=(\mbox{Rhs1},\mbox{rhs2}), \quad (d_W,d_b)\in\mathbb{R}^{p\times q}\times\mathbb{R},
\end{equation}
where  $\V$ is defined in (\ref{eq:Hession_phi}) with $\mathcal{G}\in \partial\Pi_{\mathbb{B}^{\tau}_2}(X_k(\widetilde{W}))$, $M\in \partial\Pi_S(\omega_k(\widetilde{W}, \widetilde{b}))$, $\sigma:=\sigma_k$, $\widetilde{W}:=W^{k,i}$, and $\widetilde{b}:=b^{k,i}$. The right-hand sides are
$$
\mbox{Rhs1}:=-[ \widetilde{W}-\A^*\Pi_S(\omega_k(\widetilde{W},\widetilde{b}))+\Pi_{\mathbb{B}^{\tau}_2}(X_k(\widetilde{W}))]\ \mbox{and}\ \mbox{rhs2}:=y^{\top}\Pi_S(\omega_k(\widetilde{W},\widetilde{b})).
$$
It can be further rewritten as
\begin{equation}\label{eq:Newton_simple2}
	\left\{
	\begin{array}{l}
		\widetilde{\V}d_W=\mbox{Rhs1}-(\sigma\cdot\mbox{rhs2})(\sigma y^{\top}My+\rho)^{-1}\A^*My,\\[2mm]
		d_b=(\sigma y^{\top}My+\rho)^{-1}(\mbox{rhs2}-\sigma y^{\top}M\A\, d_W),
	\end{array}
	\right.
\end{equation}
where 
$$\widetilde{\V}:=\I+\sigma\G+\mathcal{H}
$$ with
\begin{equation}\label{eq:def_H}
\mathcal{H}:=\sigma\A^*M\A-\sigma^2(\sigma y^{\top}My+\rho)^{-1}\A^*Myy^{\top}M\A.
\end{equation}

Clearly, the major cost for finding an inexact solution of (\ref{eq:Newton_simple2}) lies in the computation of
\[
\widetilde{\V}d_W=d_W+\sigma\G d_W+\mathcal{H} d_W.
\]
In the next two parts, we discuss how to reduce the computational cost for computing $\G d_W$ and $\mathcal{H} d_W$ by selecting special linear operators $\G$ and $\mathcal{H}$, respectively.

\subsection{The computation of $\G d_W$}

According to \cite[Theorem 2.5]{kaifeng2011algorithms} or \cite[Proposition 2]{jiang2013solving}, if $\|X_k(\widetilde{W})\|_2\leq \tau$, then we can select $\G$ such that $\mathcal{G} d_W = d_W$.
In the case $\|X_k(\widetilde{W})\|_2>\tau$, the computation of $\G d_W$ depends on the SVD of $X_k(\widetilde{W})$. 
Without loss of generality, we assume  $p\leq q$.  Let $X_k(\widetilde{W})$ have the following SVD:
$$
X_k(\widetilde{W})=U\left[ \mbox{Diag}(\nu)\ 0 \right]V^{\top}=U\mbox{Diag}(\nu)V^{\top}_1,
$$
where $U\in\mathcal{O}^{p}$, $V=[V_1\ V_2]\in\mathcal{O}^q$ with $V_1\in\mathbb{R}^{q\times p}$ and $V_2\in\mathbb{R}^{q\times(q-p)}$, and $\nu=(\nu_1,\ldots,\nu_p)^{\top}\in\mathbb{R}^p$ is the vector of singular values
ordered as $\nu_1\geq\ldots\geq\nu_p$.

Denote
\begin{align}
	\alpha:=&[k_1(\nu)],\quad \beta:=\beta_1\cup\beta_2,\quad \gamma:=\{p+1, p+2, \ldots,q\},\label{eq:alp_beta_gamma}\\
	\beta_1:=&\{k_1(\nu)+1,\ldots,k_2(\nu)\},\quad \beta_2:=\{k_2(\nu)+1, k_2(\nu)+2,\ldots,p\},\nonumber
\end{align}
where $k_1(\nu):=\max\{ i\in[p]\ |\ \nu_i>\tau \}$ and $k_2(\nu):=\max\{ i\in[p]\ |\ \nu_i\geq\tau \}$.
Define three matrices $\Xi^1\in\mathbb{R}^{p\times p}$, $\Xi^2\in\mathbb{R}^{p\times p}$, and $\Xi^3\in\mathbb{R}^{p\times (q-p)}$:
\begin{align}
	\Xi^1 =\left[
	\begin{array}{ccc}
		E_{\alpha\alpha} & E_{\alpha\beta_1} & \Xi^1_{\alpha\beta_2}\\
		E_{\beta_1\alpha} & 0 & 0\\
		(\Xi^1_{\alpha\beta_2})^{\top} & 0 & 0
	\end{array}
	\right],  \
	\Xi^2=\left[
	\begin{array}{ccc}
		\Xi^2_{\alpha\alpha} & \Xi^2_{\alpha\beta_1} & \Xi^2_{\alpha\beta_2}\\
		(\Xi^2_{\beta_1\alpha})^{\top} & 0 & 0\\
		(\Xi^2_{\alpha\beta_2})^{\top} & 0 & 0
	\end{array}
	\right], \ \Xi^3=\left[
	\begin{array}{c}
		\Xi^3_{\alpha\gamma} \\
		0\\
		0
	\end{array}
	\right], \label{eq:B_Xi^3}
\end{align}
where $E\in\mathbb{R}^{p\times p}$ is the matrix whose elements are all ones, and
\begin{align*}
	&(\Xi^1_{\alpha\beta_2})_{ij}=\frac{\nu_i-\tau}{\nu_i-\nu_j},\ \mbox{for}\ i\in\alpha, j\in\beta_2;\quad (\Xi^2_{\alpha\alpha})_{ij}=1-\frac{2\tau}{\nu_i+\nu_j}, \ \mbox{for}\ i, j\in\alpha;\\[2mm]
	&(\Xi^2_{\alpha\beta})_{ij}=\frac{\nu_i-\tau}{\nu_i+\nu_j}, \ \mbox{for}\ i\in\alpha, j\in\beta; \quad
	(\Xi^3_{\alpha\gamma})_{ij}=1-\frac{\tau}{\nu_i},\ \mbox{for}\ i\in\alpha, j\in\gamma.
\end{align*}
As a result, If $\|X_k(\widetilde{W})\|_2>\tau$, $\G d_W$ can be expressed as (see \cite[Theorem 2.5]{kaifeng2011algorithms} or \cite[Proposition 2]{jiang2013solving})
\begin{equation}\label{eq:GW}
	\G d_W=d_{W}-\smash{\underbrace{U\left[ \Xi^1\circ S(\widetilde{H}_1)+\Xi^2\circ T(\widetilde{H}_1) \right]V^{\top}_1}_{\G_1d_W}}-\smash{\underbrace{U(\Xi^3\circ\widetilde{H}_2)V^{\top}_2}_{\G_2d_W}}, \vspace{4mm}
\end{equation}
where $\widetilde{H}_1:=U^{\top}d_WV_1$ and  $\widetilde{H}_2:=U^{\top}d_WV_2$. Here, $S:\mathbb{R}^{p\times p}\rightarrow \mathbb{S}^p$ and $T:\mathbb{R}^{p\times p}\rightarrow\mathbb{R}^{p\times p}$ are linear matrix operators defined by
\[
S(X):=(X+X^{\top})/2\quad \mbox{and}\quad T(X):=(X-X^{\top})/2,\quad \forall X\in\mathbb{R}^{p\times p}.
\]
%

The primary computational cost of $\G d_W$  is approximately $4p^2q+4pq^2$ flops. However, by leveraging the sparsity of the matrices $\Xi^1$, $\Xi^2$, and $\Xi^3$, the computation of $\G_1d_W$ and $\G_2d_W$ in (\ref{eq:GW}) can be performed more efficiently.

\vspace{2mm}

\noindent\underline{\bf The term $\G_1d_W$}. Based on the definitions of  $\Xi^1$ and $\Xi^2$ in (\ref{eq:B_Xi^3}), $\G_1d_W$ in (\ref{eq:GW}) can be computed by
\begin{align*}
	\G_1d_W=&U\left[ \Xi^1\circ S(\widetilde{H}_1)+\Xi^2\circ T(\widetilde{H}_1) \right]V^{\top}_1\\
	=&U_\alpha\left[(S(\widetilde{H}_1))_{\alpha\beta_1} + \Xi^2_{\alpha\beta_1}\circ (T(\widetilde{H}_1))_{\alpha\beta_1}\right] V^{\top}_{1\beta_1} +U_\alpha\left[\Xi^1_{\alpha\beta_2}\circ(S(\widetilde{H}_1))_{\alpha\beta_2}\right.\\
	&\left.+\Xi^2_{\alpha\beta_2}\circ (T(\widetilde{H}_1))_{\alpha\beta_2}\right] V^{\top}_{1\beta_2}
	+\left\{U_{\alpha}\left[(S(\widetilde{H}_1))_{\alpha\alpha}+\Xi^1_{\alpha\alpha}\circ(T(\widetilde{H}_1))_{\alpha\alpha}  \right]\right.\\
	&+\left.U_{\beta_2}\left[(\Xi^1_{\alpha\beta_2})^{\top}\circ(S(\widetilde{H}_1))_{\beta_2\alpha} + (\Xi^2_{\alpha\beta_2})^{\top}\circ(T(\widetilde{H}_1))_{\beta_2\alpha}\right] \right.\\
	&+\left. U_{\beta_1}\left[(S(\widetilde{H}_1))_{\beta_1\alpha} + (\Xi^2_{\alpha\beta_1})^{\top}\circ(T(\widetilde{H}_1))_{\beta_1\alpha}\right]
	\right\} V^{\top}_{1\alpha}.
\end{align*}
It shows that the computational cost for computing $\G_1d_W$
can be reduced from $4p^3+4p^2q$ flops to $14|\alpha|pq+4|\alpha|p^2$ flops.
From the definition of $\alpha$ in (\ref{eq:alp_beta_gamma}), if the tuple 
$(W^{k+1}, b^{k+1}, v^{k+1}, U^{k+1}, \lambda^{k+1}, \Lambda^{k+1})$ generated by Algorithm \ref{alg_ALM_SMM} is a KKT solution to (\ref{eq:KKT_system_SMM}), then 
$|\alpha|=\mbox{rank}(W^{k+1})$ follows immediately.
As mentoned in Subsection \ref{subsec:low-rank}, for sufficient large $\tau$, the rank of $W^{k+1}$ is generally much less than $p$, i.e.,  $|\alpha|\ll p$. Therefore, the low-rank structure of 
$W^{k+1}$ can be effectively leveraged within our computational framework.

%
%
%

\vspace{2mm}

\noindent\underline{\bf The term $\G_2d_W$}. According to the definition of $\Xi^3$ in (\ref{eq:B_Xi^3}), we have
$$
\begin{aligned}
	\G_2d_W=&U(\Xi^3\circ\widetilde{H}_2)V^{\top}_2=U_{\alpha}[\Xi^3_{\alpha\gamma}\circ(U^{\top}_{\alpha}d_W V_2)]V^{\top}_2	=U_{\alpha}\mbox{Diag}(d)U^{\top}_{\alpha}d_WV_2V_2^{\top}\\[2mm]
	=&U_{\alpha}\mbox{Diag}(d)U^{\top}_{\alpha}d_W(I_q-V_1V^{\top}_1)=U_{\alpha}\mbox{Diag}(d)[U^{\top}_{\alpha}d_W-(U^{\top}_{\alpha}d_WV_1)V^{\top}_1]
\end{aligned}
$$
with $d:=(d_1,\ldots,d_{|\alpha|})^{\top}\in\mathbb{R}^{|\alpha|}$ and $d_{\ell}:=1-\tau/\nu_{\ell}$ for $\ell=1,\ldots, |\alpha|$.
The last equality shows that by the use of the sparsity in $\Xi^3$, the cost for computing
$U(\Xi^3\circ\widetilde{H}_2)V^{\top}_2$ can also be notably decreased from $4p^2(q-p)+4pq(q-p)$ flops to $4|\alpha|pq$ flops when $|\alpha|\ll p$.

The preceding arguments show that exploiting the sparsity in $\mathcal{G}$, induced by the low rank of $W^{k+1}$, reduces the computational cost of $\mathcal{G} d_W$ from $4p^2q + 4pq^2$ flops to $18|\alpha|pq + 4|\alpha|p^2$ flops, with $|\alpha|$ typically much smaller than $p$.

\subsection{The computation of $\mathcal{H} d_W$}

By the definition of $\mathcal{H}$ in (\ref{eq:def_H}), we have
\begin{equation}\label{temp-1}
	\mathcal{H} d_W=\sigma\A^*M\A d_W-\sigma^2(\sigma y^{\top}My+\rho)^{-1}\A^*Myy^{\top}M\A d_W.
\end{equation}
Notice that in the above expression, $M$ can be an arbitrary element in  $\partial\Pi_{S}(\omega(\widetilde{W},\widetilde{b}))$.
To save computational cost, we select a simple  $M\in \partial\Pi_{S}(\omega(\widetilde{W},\widetilde{b}))$ as follows:
$$
M=\mbox{Diag}(v)\quad \mbox{with}\quad v_j=\left\{
\begin{array}{ll}
	1, &\quad \mbox{if}\ 0<(\omega(\widetilde{W},\widetilde{b}))_j<C, \\
	0, &\quad \mbox{otherwise},
\end{array}
\right. \quad j=1,2,\ldots,n.
$$
Let
\begin{equation}\label{eq:def_J_1}
	\J_1=\J_1(\omega(\widetilde{W},\widetilde{b})):=\{j\in[n]\ |\ 0<(\omega(\widetilde{W},\widetilde{b}))_j<C\}.
\end{equation}
It is easy to get $y^{\top}My=|\J_1|$, and
\begin{align*}
	\A^*My=\A^*_{\scriptscriptstyle\J_1}y_{\scriptscriptstyle\J_1},\quad
	y^{\top}M\A\,d_W=y^{\top}_{\scriptscriptstyle\J_1}\A_{\scriptscriptstyle\J_1} d_W,\quad
	\A^*M\A\, d_W=\A^*_{\scriptscriptstyle\J_1}\A_{\scriptscriptstyle\J_1}d_W,
\end{align*}
where the linear mapping $\A_{\scriptscriptstyle\I}$ and its adjoint mapping $\A^*_{\scriptscriptstyle\I}$ with  $\I\subseteq[n]$ are defined by
\begin{equation}\label{eq:def_AI}
\A_{\scriptscriptstyle\I}Y:=(\A Y)_{\scriptscriptstyle\I}\quad \mbox{and}\quad \A^*_{\scriptscriptstyle\I}z:=\sum_{j\in\I}z_jy_jX_j, \quad \forall\ Y\in\mathbb{R}^{p\times q}, \quad\forall z\in\mathbb{R}^n.
\end{equation}
Therefore, it follows from (\ref{temp-1})  that
\[
\mathcal{H} d_W=\sigma\A^*_{\scriptscriptstyle\J_1}\A_{\scriptscriptstyle\J_1}d_W-\sigma^2(\sigma|\J_1|
+\rho)^{-1}\A^*_{\scriptscriptstyle\J_1}y_{\scriptscriptstyle\J_1}y^{\top}_{\scriptscriptstyle\J_1}\A_{\scriptscriptstyle\J_1}d_W.
\]
It means that the cost for computing  $\mathcal{H} d_W$ can be decreased from $O(npq)$ to $O(|\J_1|pq)$.
Observe that if $(W^{k+1}, b^{k+1}, v^{k+1}, U^{k+1}, \lambda^{k+1}, \Lambda^{k+1})$ is generated by Algorithm \ref{alg_ALM_SMM} as a solution of the KKT system (\ref{eq:KKT_system_SMM}), then the index set $\J_1$ corresponds to the active support matrices at $(W^{k+1}, b^{k+1})$. In fact, based on Step 2 in Algorithm \ref{alg_ALM_SMM} and (\ref{subprob:ALM_primal_v_U}), we deduce from the Moreau identity that
$$
\begin{aligned}
	\lambda^{k+1}=-\omega_k(W^{k+1},b^{k+1})+\mbox{Prox}_{\delta^*_S}(\omega_k(W^{k+1},b^{k+1}))
	=-\Pi_{S}(\omega_k(W^{k+1},b^{k+1})),
\end{aligned}
$$
where $\omega_k(\cdot,\cdot)$ is defined in (\ref{def:wk_X_k}). It further implies from (\ref{eq:def_J_1}) that
\begin{equation}\label{eq:relation_J1_ASM}
		\J_1=\{j\in[n]\ |\ (\omega_k(W^{k+1},b^{k+1}))_j\in(0,C)\}
		=\{j\in[n]\ |\ (-\lambda^{k+1})_j\in(0,C)\}.
\end{equation}
The analysis in Subsection \ref{subsec:sample_sparsity} reveals that the cardinality of the index set $\J_1$ is typically much smaller than $n$, thereby providing a positive answer to the question posed earlier. This results in a significant decrease in the computational cost for calculating $\mathcal{H} d_W$.

\subsection{Solving equation (\ref{eq:Newton_simple2})}

The discussion above has shown that  the equation (\ref{eq:Newton_simple2}) can be reduced as
\begin{equation}\label{eq:Newton_simple3}
	\left\{
	\begin{array}{l}
		\widetilde{\V}d_W=\mbox{Rhs1}-(\sigma\cdot\mbox{rhs2})(\sigma|\J_1|+\rho)^{-1}\A^*_{\scriptscriptstyle\J_1}y_{\scriptscriptstyle\J_1},\\[2mm]
		d_b=(\sigma|\J_1|+\rho)^{-1}\left( \mbox{rhs2}-\sigma y^{\top}_{\scriptscriptstyle\J_1}\A_{\scriptscriptstyle\J_1}d_W \right)
	\end{array}
	\right.
\end{equation}
with
\begin{equation}\label{eq:V_J}
	\widetilde{\V}=\I+\sigma\G+\sigma\A^*_{\scriptscriptstyle\J_1}\A_{\scriptscriptstyle\J_1}
	-\sigma^2(\sigma|\J_1|+\rho)^{-1}\A^*_{\scriptscriptstyle\J_1}y_{\scriptscriptstyle\J_1}y^{\top}_{\scriptscriptstyle\J_1}\A_{\scriptscriptstyle\J_1}.
\end{equation}
Accordingly, the inexact rule in Step 1 of Algorithm \ref{alg_SSNCG} is written as
$$
\left\|\widetilde{\V}d_W-\left[ \mbox{Rhs1}-(\sigma\cdot\mbox{rhs2})(\sigma |\J_1|+\rho)^{-1}\A^*_{\scriptscriptstyle\J_1}y_{\scriptscriptstyle\J_1} \right]\right\|\leq\min(\bar{\eta}, \|\nabla\varphi_k(\widetilde{W},\widetilde{b})\|^{1+\varrho}).
$$
The positive definiteness of the above linear operator $\widetilde{\V}$ is proved in the next proposition.
\begin{proposition}
	The linear operator $\widetilde{\V}:\mathbb{R}^{p\times q}\times\mathbb{R}\rightarrow \mathbb{R}^{p\times q}\times\mathbb{R}$ defined in (\ref{eq:V_J}) is a self-adjoint and positive definite operator.
\end{proposition}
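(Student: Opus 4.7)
The plan is to decompose $\widetilde{\V}$ into four summands and verify self-adjointness termwise, then lower bound the quadratic form $\langle d_W,\widetilde{\V} d_W\rangle$ using Cauchy--Schwarz together with the fact that $|y_j|=1$ for all $j$.

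For self-adjointness, I would argue as follows. The identity $\I$ is trivially self-adjoint. The element $\G\in\partial\Pi_{\mathbb{B}^\tau_2}(X_k(\widetilde{W}))$ is self-adjoint because the projection onto a closed convex set has a symmetric generalized Jacobian (this is also visible from the explicit formula in (\ref{eq:GW}), where $\Xi^1\circ S(\cdot)+\Xi^2\circ T(\cdot)$ and $\Xi^3\circ(\cdot)$ yield symmetric bilinear forms). The operator $\A^*_{\scriptscriptstyle\J_1}\A_{\scriptscriptstyle\J_1}$ is self-adjoint by construction, and for the rank-one term,
\[
\langle d_1,\A^*_{\scriptscriptstyle\J_1}y_{\scriptscriptstyle\J_1}y^{\top}_{\scriptscriptstyle\J_1}\A_{\scriptscriptstyle\J_1} d_2\rangle=(y^\top_{\scriptscriptstyle\J_1}\A_{\scriptscriptstyle\J_1}d_1)(y^\top_{\scriptscriptstyle\J_1}\A_{\scriptscriptstyle\J_1}d_2),
\]
which is symmetric in $(d_1,d_2)$. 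Linearity of self-adjointness finishes this part.

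For positive definiteness, fix any nonzero $d_W\in\mathbb{R}^{p\times q}$ and set $u:=\A_{\scriptscriptstyle\J_1}d_W\in\mathbb{R}^{|\J_1|}$. Expanding yields
\[
\langle d_W,\widetilde{\V} d_W\rangle=\|d_W\|_{\F}^2+\sigma\langle d_W,\G d_W\rangle+\sigma\|u\|^2-\sigma^2(\sigma|\J_1|+\rho)^{-1}(y^{\top}_{\scriptscriptstyle\J_1}u)^2.
\]
Since $\G$ lies in the Clarke generalized Jacobian of a projection onto a nonempty closed convex set, $\langle d_W,\G d_W\rangle\ge 0$. The critical step is controlling the negative rank-one correction. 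Using $|y_j|=1$ and Cauchy--Schwarz,
\[
(y^{\top}_{\scriptscriptstyle\J_1}u)^2\le \|y_{\scriptscriptstyle\J_1}\|^2\|u\|^2=|\J_1|\,\|u\|^2,
\]
so
\[
\sigma\|u\|^2-\sigma^2(\sigma|\J_1|+\rho)^{-1}(y^{\top}_{\scriptscriptstyle\J_1}u)^2\ge \sigma\|u\|^2\Bigl(1-\tfrac{\sigma|\J_1|}{\sigma|\J_1|+\rho}\Bigr)=\tfrac{\sigma\rho}{\sigma|\J_1|+\rho}\|u\|^2\ge 0,
\]
where the last inequality uses $\sigma,\rho>0$.

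Combining the above, $\langle d_W,\widetilde{\V} d_W\rangle\ge \|d_W\|_{\F}^2>0$ for any $d_W\neq 0$, which establishes positive definiteness. The only potential obstacle is the sign of the subtracted rank-one term, but the structural identity $y_j^2=1$ together with the strictly positive regularizer $\rho$ absorbs it cleanly, and no deeper property of $\G$ beyond positive semidefiniteness is needed.
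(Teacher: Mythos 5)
Your proposal is correct and follows essentially the same route as the paper: self-adjointness and positive semidefiniteness of $\G$ as an element of the generalized Jacobian of a projection, plus a lower bound on the quadratic form that reduces the negative rank-one correction via $(y^{\top}_{\scriptscriptstyle\J_1}u)^2\leq|\J_1|\,\|u\|^2$. The paper packages this last inequality as the statement that $I_{|\scriptscriptstyle\J_1|}-y_{\scriptscriptstyle\J_1}y^{\top}_{\scriptscriptstyle\J_1}/|\J_1|$ is an orthogonal projection, which is the same fact as your Cauchy--Schwarz step, and both arguments conclude with $\langle d_W,\widetilde{\V}d_W\rangle\geq\langle d_W,(\I+\sigma\G)d_W\rangle>0$.
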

\begin{proof}
	From \cite[Proposition 1]{meng2005semismoothness}, we know that each element $\G$ in $\partial\Pi_{\mathbb{B}^\tau_2}(\widetilde{W})$ is self-adjoint
	and positive semidefinite. So $\widetilde{\V}$ is self-adjoint.
	If the index set $\J_1$ is empty, then $\widetilde{\V}=\I+\sigma\G$ is positive definite. Otherwise, for any $d_W\in\mathbb{R}^{p\times q}\setminus\{0\}$, it follows from (\ref{eq:V_J}) that
	\begin{align*}
		&\left\langle d_W,\widetilde{\V}d_W\right\rangle
		=\left\langle d_W,\left[\I+\sigma\G+\sigma\A^*_{\scriptscriptstyle\J_1}\left( I_{|\scriptscriptstyle\J_1|}-\sigma(\sigma|\J_1|+\rho)^{-1}y_{\scriptscriptstyle\J_1}y^{\top}_{\scriptscriptstyle\J_1} \right)\A_{\scriptscriptstyle\J_1}\right]d_W\right\rangle\\
		=&\left\langle d_W,(\I+\sigma\G)d_W \right\rangle+\sigma\left\langle\A_{\J_1}d_W, \left(I_{|\J_1|}-\sigma(\sigma|\J_1|+\rho)^{-1}y_{\J_1}y^{\top}_{\J_1}\right)(\A_{\J_1}d_W)\right\rangle\\
		\geq&\left\langle d_W,(\I+\sigma\G)d_W \right\rangle+\sigma\left\langle\A	_{\J_1}d_W, \left(I_{|\J_1|}-y_{\J_1}y^{\top}_{\J_1}/|\J_1|\right)(\A_{\J_1}d_W)\right\rangle\\
		\geq&\langle d_W, (\I+\sigma\G)d_W \rangle>0,
	\end{align*}
	where the last second inequality follows from the fact
	that $I_{|\scriptscriptstyle\J_1|}-y_{\scriptscriptstyle\J_1}y^{\top}_{\scriptscriptstyle\J_1}/|\J_1|$ is an orthogonal projection matrix in $\mathbb{R}^{\scriptscriptstyle|\J_1|\times|\J_1|}$.
	The desired conclusion is achieved. \qed
\end{proof}


Table \ref{tab_cost_VH} summarizes the main computational costs of $\widetilde{\V}H$ obtained by a traditional manner in (\ref{eq:Newton_simple2}) and by exploiting the solution's low rank and sample sparsity in (\ref{eq:Newton_simple3}).
This indicates that when calculating the term $\widetilde{\V}H$, the main expenses are only $O(\max\{|\J_1|,|\alpha|\}pq)$, which is always significantly lower than $O(npq)$ when $n\gg q\geq p$.

\begin{table}[H]
	\caption{The main computational costs of $\widetilde{\V}H$ for a given $H$ under two different manners with $n\geq q\geq p$}
	\centering
	\label{tab_cost_VH}
	\footnotesize
		\vspace{1mm}
		\begin{spacing}{1.50}
			\begin{tabular}{ l c|l c}
				\hline
				\multicolumn{2}{ l| }{Traditional manner in (\ref{eq:Newton_simple2})} &
				\multicolumn{2}{ c }{Exploiting the sparsity of $M$ and $\G$ in (\ref{eq:Newton_simple3})} \\ \hline
				main terms & cost & main terms & cost \\ \hline
				$y^{\top}My$ & $O(n)$ & $y^{\top}_{\scriptscriptstyle\J_1}y_{\scriptscriptstyle\J_1}$ & $O(|\J_1|)$\\ \hline
				$\A^{*}My$ & $O(npq)$ & ${\A}^{*}_{\scriptscriptstyle\J_1}y_{\scriptscriptstyle\J_1}$ & $O(|\J_1|pq)$\\ \hline
				$y^{\top}M\A H$ & $O(npq)$ & $y^{\top}_{\scriptscriptstyle\J_1}\A_{\scriptscriptstyle\J_1}H$ & $O(|\J_1|pq)$\\ \hline
				$\A^{*}M\A H$ & $O(npq)$ & $\A^{*}_{\scriptscriptstyle\J_1}\A_{\scriptscriptstyle\J_1}H$ & $O(|\J_1|pq)$\\ \hline
				$\G H$ & $O(pq^2)$ & $\G H$ & $O(|\alpha|pq)$\\ \hline
				total cost & $O(npq)$ & total cost & $O(\max\{|\J_1|,|\alpha|\}pq)$ \\ \hline
			\end{tabular}
	\end{spacing}
\end{table}

\section{An adaptive strategy}\label{Section:AS}
The previous section concentrated on solving the SMM model (\ref{Model:SMM_eq2}) with fixed parameters $C$ and $\tau$.
In many cases, one needs to compute a solution path $\{W(C_i), b(C_i)\}^{N}_{i=1}$ across a specified sequence of grid points $0<C_1<C_2<\ldots<C_N$.
A typical example   is to optimize the hyperparameter $C$ through cross-validation while maintaining $\tau$ constant in model (\ref{Model:SMM_eq2}).
This section presents an adaptive sieving (AS) strategy that iteratively reduces the sample size, thereby facilitating the efficient computation of the solution path for $C$
 in models (\ref{Model:SMM_eq2}), particularly for extremely large $n$.

We develop the AS strategy based on the following two key observations.
First, the solution $(\overline{W}, \overline{b})$ of  (\ref{Model:SMM}) only depends  on the
samples corresponding to its support matrices (abbreviated as active samples).
Identifying these active samples can reduce the scale of the problem and thereby reduce its computational cost.
Second, as shown in  Figure \ref{fig:SM_ASM}, when  $C_{i-1}<C_{i}$, the support matrices  at $C=C_{i-1}$ is always a superset of those at $C=C_{i}$.
Thus, to solve model (\ref{Model:SMM}) at $C=C_i$, it is reasonable to  initially  restrict  it to the active sample set at $(W(C_{i-1}),b(C_{i-1}))$.

The core idea behind the AS strategy can be viewed as a specialized warm-start approach. It aggressively guesses
and adjusts the active samples at $(W(C_i), b(C_i))$ for a new grid point $C_i$, using the solution
$(W(C_{i-1}), b(C_{i-1}))$ obtained from the previous grid point $C_{i-1}$. Specifically, we initiate
the $i$th problem within a restricted sample space that includes all active samples at $(W(C_{i-1}), b(C_{i-1}))$.
After solving the restricted SMM model, we update active samples at the computed solution.
The process is  repeated until no additional active samples need to be included.

The similar strategy has been employed to efficiently compute solution paths for convex composite sparse
machine learning problems by leveraging the inherent sparsity of solutions
\cite{yuan2023adaptive,yuan2022dimension,li2023mars,wu2023convex}. However, since the SMM model
 (\ref{Model:SMM_eq2}) does not guarantee solution's sparsity, the method proposed in \cite{yuan2023adaptive,lin2020adaptive}
 cannot be directly applied here. 
 We will develop an adaptive sieving strategy that efficiently generates a solution path for the SMM model (\ref{Model:SMM_eq2}) by effectively exploiting the intrinsic sparsity of the samples.
For a specified index set $\I\subseteq[n]$, we define
\[
S^{\,i}_{\I}:=\{ x\in\mathbb{R}^{|\I|}\ |\ 0\leq x_j\leq C_i, j=1,\ldots,|\I| \}.
\]
The subscript $\I$ is omitted when $\I=[n]$.
Detailed framework of the AS strategy is presented in Algorithm \ref{alg_AS}.

\begin{algorithm}[htbp]
	\caption{An AS strategy } 
	\vskip 0.05in
	\noindent
	Choose an initial point $(\overline{W}(C_0),\overline{b}(C_0))\in\mathbb{R}^{p\times q}\times\mathbb{R}$
	, a sequence of grid points $0<C_0<C_1<\ldots<C_N$, a tolerance $\varepsilon\geq0$, a scalar $\widehat{\varepsilon}\geq0$, and a positive integer $d_{\max}$. Compute the initial index set $\I^*(C_0):=\{j\in[n]\ |\ y_j(\langle \overline{W}(C_0),X_j \rangle + \overline{b}(C_0))\leq 1+\widehat{\varepsilon}\}$. Execute the following steps for $i=1,2,\ldots,N$ with the initialization $\I^0(C_i)=\I^*(C_{i-1})$ and $k=0$.
	\vskip 0.1in
	\begin{algorithmic}[0]
		\State {\bf Step 1}. Find a KKT tuple $(W^k(C_i), b^k(C_i), v^{k}_{\scriptscriptstyle\I}(C_i), U^k(C_i), \lambda^k_{\scriptscriptstyle\I}(C_i), \Lambda^k(C_i))$ $\in\mathbb{R}^{p\times q}\times\mathbb{R}\times\mathbb{R}^{|\I^k(C_i)|}\times\mathbb{R}^{p\times q}\times\mathbb{R}^{|\I^k(C_i)|}\times\mathbb{R}^{p\times q}$ of the following problem
		\begin{equation}\label{model:AS_subpro}
			\begin{array}{cc}
				\mathop{\mbox{minimize}}\limits_{\tiny W, b, v_{\scriptscriptstyle\I}, U} & {\displaystyle\frac{1}{2}}\|W\|^2_{\F}+\tau\|U\|_*+\delta^*_{S^{\,i}_{\scriptscriptstyle\I^ k}}(v_{\scriptscriptstyle\I})-\langle \delta_W,W \rangle-b\,\delta_b-\langle \delta_{v_{\scriptscriptstyle\I}},v_{\scriptscriptstyle\I} \rangle-\langle \delta_U,U \rangle\\
				\mbox{subject to} & \A_{\I^k(C_i)} W +by_{\scriptscriptstyle\I^k(C_i)}+v_{\scriptscriptstyle\I}-(e_n)_{\I^k(C_i)}=\delta_{\lambda_{\scriptscriptstyle\I}},\ W-U=\delta_{\Lambda},
			\end{array}
		\end{equation}
		where $\A_{\I^k(C_i)}$ is defined in (\ref{eq:def_AI}) and
		$(\delta_W, \delta_b,\delta_{v_{\scriptscriptstyle\I}},\delta_U,\delta_{\lambda_{\scriptscriptstyle\I}},\delta_{\Lambda})\in\mathbb{R}^{p\times q}\times\mathbb{R}\times\mathbb{R}^{|\I^k(C_i)|}\times\mathbb{R}^{p\times q}\times\mathbb{R}^{|\I^k(C_i)|}\times\mathbb{R}^{p\times q}$ is an error satisfying
		\begin{equation}\label{eq:error_AS}
			\max(\|\delta_W\|, |\delta_b|, \|\delta_{v_{\scriptscriptstyle\I}}\|, \|\delta_U\|, \|\delta_{\lambda_{\scriptscriptstyle\I}}\|, \|\delta_{\Lambda}\|)\leq\varepsilon.
		\end{equation}
		\State {\bf Step 2}. Compute  index set
		\begin{equation}\label{def:indexJ}
			\J^k(C_i):=\{j\in\overline{\I}^k(C_i)\ |\ v^k_j(C_i)\geq0\},
		\end{equation}
		where $\overline{\I}^k(C_i):=[n]\setminus\I^k(C_i)$.
Extend $v^k_{\scriptscriptstyle\I}(C_i)$ and $\lambda^k_{\I}(C_i)$ to $n$-dimensional vectors $v^k(C_i)$ and $\lambda^k(C_i)$ by setting
$v^k_j(C_i)=1-y_j(\langle W^k(C_i),X_j \rangle+b^k(C_i))$ for $j\in\overline{\I}^k(C_i)$, and $\lambda^k_{j}(C_i) = -C$ if $j\in\J^k(C_i)$, otherwise $\lambda^k_{j}(C_i)=0$.
		\vskip 0.1in
		\State {\bf Step 3}. If $\J^k(C_i)=\emptyset$, stop and set
$(\overline{W}(C_i), \overline{b}(C_i), \overline{v}(C_i), \overline{U}(C_i), \overline{\lambda}(C_i)$, $\overline{\Lambda}(C_i))=(W^k(C_i), b^k(C_i), v^k(C_i), U^k(C_i), \lambda^k(C_i), \Lambda^k(C_i))$ with $\I^*(C_i)=\{j\in[n]\ |\ y_j(\langle \overline{W}(C_i),X_j \rangle + \overline{b}(C_i))\leq 1+\widehat{\varepsilon}\}$. Let $i\leftarrow i+1$ (unless $i=N$ already so that the algorithm should be stopped). Otherwise compute the index set
		$$
		\widehat{\J}^{k+1}(C_i):=\left\{j\in\J^{k}(C_i)\ \bigg|
		\begin{array}{l}
			(v^k(C_i))_j \ \mbox{is among the first}\ d\ \mbox{largest} \\
			\mbox{values}\ \mbox{in}\ \{v^k_t(C_i)\}_{t\in\J^{k+1}(C_i)}
		\end{array}
		\right\},
		$$
		where $d$ is a given positive integer satisfying $d\leq\min\{|\J^{k}(C_i)|, d_{\max}\}$. Let $\I^{k+1}(C_i):=\I^k(C_i)\cup\widehat{\J}^{k+1}(C_i)$.
Set $k\leftarrow k+1$ and go to Step 1.
	\end{algorithmic}
	\label{alg_AS}
\end{algorithm}

\begin{remark}	
	\begin{itemize}
	\item[(a)] In Step 2 of Algorithm \ref{alg_AS}, the index set $\J^k(C_i)$ can be expressed as
	$$
	\J^k(C_i)=\{j\in\overline{\I}^k(C_i)\ |\ y_j(\langle W^k(C_i),X_j\rangle+b^k(C_i))\leq 1\}.
	$$
It aims to identify all indices of the support matrices at $(W^k(C_i), b^k(C_i))$ beyond $\I^k(C_i)$, as described by (\ref{eq:inclusion_SM}).
	\item[(b)] In Step 3 of Algorithm \ref{alg_AS}, the nonnegative scalar $\widehat{\varepsilon}$ provides
flexibility in adjusting the size of the index set $\I^*(C_i)$. Specifically,
if $\widehat{\varepsilon}=0$, $\I^*(C_i)$ includes all indices of active samples at $(\overline{W}(C_i),\overline{b}(C_i))$.
In general, by selecting an appropriate positive scalar $\widehat{\varepsilon}>0$, we aim to ensure that
$\I^*(C_{i})$ includes as many active indices as possible at the solution for the next grid point
$C_{i+1}$.
	\item[(c)] The reduced subproblem (\ref{model:AS_subpro}) shares the same mathematical structure as (\ref{Model:SMM_eq2})
 with a reduced subset of training samples $\{X_j, y_j\}_{j\in\I^k(C_i)}$. This similarity allows us to efficiently apply Algorithm \ref{alg_ALM_SMM} to solve (\ref{model:AS_subpro}).
	\end{itemize}
\end{remark}

Lastly, we establish the finite convergence of Algorithm \ref{alg_AS} in the following theorem. Its proof is given in Appendix \ref{Appendix:Theorem_AS}.
\begin{theorem}\label{Thm:AS_convergence}
	For each $C_i$, Algorithm \ref{alg_AS} terminates within  a finite number of iterations. That is,
 there exists an integer $\bar{k}\in[1,n]$ such that $\J^{\,\bar{k}}(C_i)=\emptyset$. Furthermore, any output solution
 $\left(\overline{W}(C_i), \overline{b}(C_i), \overline{v}(C_i), \overline{U}(C_i), \overline{\lambda}(C_i), \overline{\Lambda}(C_i)\right)$
 is a KKT tuple for the following problem
	\begin{equation}\label{eq:AS_whole_model}
		\begin{array}{cc}
			\mathop{\mbox{minimize}}\limits_{(W,b,v,U)\in\X} & {\displaystyle\frac{1}{2}}\|W\|^2_{\F}+\tau\|U\|_*+\delta^*_{S^{i}}(v)-\langle \delta_W,W \rangle-b\,\delta_b-\langle \delta_{v},v \rangle-\langle \delta_U,U \rangle\\
			\mbox{subject to} & \A W +by+v-e_n=\delta_{\lambda},\ W-U=\delta_{\Lambda},
		\end{array}
	\end{equation}
	where the error $(\delta_W, \delta_b, \delta_v, \delta_U, \delta_{\lambda}, \delta_{\Lambda})$ satisfies $\max(\|\delta_{W}\|, |\delta_b|, \|\delta_v\|,$\ $\|\delta_U\|, \|\delta_{\lambda}\|$, $\|\delta_{\Lambda}\|)\leq\varepsilon$.
\end{theorem}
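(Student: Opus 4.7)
The plan is to prove the two assertions separately: finite termination first, then the KKT characterization of the output.

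For finite termination, I would observe that whenever $\mathcal{J}^k(C_i)\neq\emptyset$, Step~3 forms $\widehat{\mathcal{J}}^{k+1}(C_i)\subseteq\mathcal{J}^k(C_i)\subseteq\overline{\mathcal{I}}^k(C_i)$ with cardinality $d\geq 1$. Consequently $\mathcal{I}^{k+1}(C_i)=\mathcal{I}^k(C_i)\cup\widehat{\mathcal{J}}^{k+1}(C_i)$ is a strict superset of $\mathcal{I}^k(C_i)$, so $|\mathcal{I}^k(C_i)|$ grows by at least one at every non-terminating iteration. Since $\mathcal{I}^k(C_i)\subseteq[n]$, the algorithm must reach $\mathcal{J}^{\bar{k}}(C_i)=\emptyset$ for some $\bar{k}\in[1,n]$.

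For the KKT claim, the approach is to take the exact-KKT tuple for the reduced problem (\ref{model:AS_subpro}) at the terminating iteration $\bar k$, and extend the index-$\mathcal{I}$ components of the dual variable $\lambda_{\mathcal{I}}$ and of the inexactness $\delta_{\lambda_{\mathcal{I}}},\delta_{v_{\mathcal{I}}}$ by zero on $\overline{\mathcal{I}}^{\bar{k}}(C_i)$, while extending $v_\mathcal{I}$ through the formula $v_j=1-y_j(\langle W^{\bar k}(C_i),X_j\rangle+b^{\bar k}(C_i))$ used in Step~2. I would then verify each of the six KKT relations for (\ref{eq:AS_whole_model}) in turn. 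The equations $W+\mathcal{A}^\ast\lambda+\Lambda-\delta_W=0$ and $y^\top\lambda-\delta_b=0$ follow immediately from the zero extension of $\lambda$, which makes $\mathcal{A}^\ast\lambda=\mathcal{A}^\ast_{\mathcal{I}^{\bar k}(C_i)}\lambda_\mathcal{I}$ and $y^\top\lambda=y^\top_{\mathcal{I}^{\bar k}(C_i)}\lambda_\mathcal{I}$. The inclusion $0\in\partial\tau\|U\|_\ast-\Lambda-\delta_U$ is unchanged. The primal feasibility relation $\mathcal{A}W+by+v-e_n=\delta_\lambda$ splits componentwise: for $j\in\mathcal{I}^{\bar k}(C_i)$ it reduces to the reduced-problem KKT, and for $j\in\overline{\mathcal{I}}^{\bar k}(C_i)$ it holds exactly by construction of $v_j$, compatible with $(\delta_\lambda)_j=0$.

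The most delicate step, and the one I would treat carefully, is the subgradient inclusion $0\in\partial\delta^*_{S^{\,i}}(v)+\lambda-\delta_v$. Since $\delta^*_{S^{\,i}}$ is separable, I would use its componentwise subdifferential
\[
\partial\delta^*_{[0,C_i]}(t)=\begin{cases}\{C_i\}&t>0,\\ [0,C_i]&t=0,\\ \{0\}&t<0.\end{cases}
\]
For $j\in\mathcal{I}^{\bar k}(C_i)$ the inclusion is inherited from (\ref{model:AS_subpro}). For $j\in\overline{\mathcal{I}}^{\bar k}(C_i)$, termination means $\mathcal{J}^{\bar k}(C_i)=\emptyset$, i.e.\ $v_j<0$ strictly; hence $\partial\delta^*_{[0,C_i]}(v_j)=\{0\}$, and with $\lambda_j=0$ and $(\delta_v)_j=0$ the inclusion holds automatically. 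Finally, the zero-extension guarantees $\|\delta_v\|=\|\delta_{v_\mathcal{I}}\|$ and $\|\delta_\lambda\|=\|\delta_{\lambda_\mathcal{I}}\|$, so the error bound $\max(\|\delta_W\|,|\delta_b|,\|\delta_v\|,\|\delta_U\|,\|\delta_\lambda\|,\|\delta_\Lambda\|)\leq\varepsilon$ is inherited from (\ref{eq:error_AS}), completing the verification. The main conceptual obstacle is precisely this reliance on the strict sign $v_j<0$ on the complement, which is exactly what the stopping test $\mathcal{J}^{\bar k}(C_i)=\emptyset$ (using ``$v_j\geq 0$'', not ``$v_j>0$'') is designed to guarantee.
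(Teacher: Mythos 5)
Your proposal is correct and follows essentially the same route as the paper's proof in Appendix C: finite termination via the strict growth of $\I^k(C_i)$ inside the finite set $[n]$, and the KKT verification by zero-extension of $\lambda_{\I}$, $\delta_{\lambda_{\I}}$, $\delta_{v_{\I}}$ together with the observation that $\J^{\bar k}(C_i)=\emptyset$ forces $\overline{v}_j<0$ on $\overline{\I}^{\bar k}(C_i)$, so the subgradient inclusion holds trivially there. The only cosmetic difference is that you spell out the cardinality argument ($d\geq1$ new indices per non-terminating round) slightly more explicitly than the paper does.
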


\section{Numerical experiments}\label{Sec:Experiments}
We have executed extensive numerical experiments to demonstrate the efficiency of our proposed algorithm and strategy on both large-scale synthetic and real datasets.
All the experiments were implemented in MATLAB on a windows workstation (8-core, Intel(R) Core(TM) i7-10700 @ 2.90GHz, 64G RAM).

\subsection{Implementation details}
For the SMM model (\ref{Model:SMM_eq2}), we measure the qualities of the computed solutions via the following relative KKT residual of the iterates:
\begin{equation}\label{eq:relkkt}
	\eta_{kkt}=\max\{ \eta_{W}, \eta_b, \eta_v, \eta_U, \eta_{\lambda}, \eta_{\Lambda} \},
\end{equation}
where
\begin{align*}
	& \eta_{W}:=\frac{\|W+\A^*\lambda+\Lambda\|_{\F}}{1+\|W\|_{\F}+\|\A^*\lambda\|_{\F}+\|\Lambda\|_{\F}}, \ \eta_b:=\frac{|\lambda^{\top}y|}{1+\sqrt{n}}, \ \eta_{v}:=\frac{\|\lambda+\Pi_S(v-\lambda)\|}{1+\|\lambda\|+\|v\|},\\
	& \eta_{U}=\frac{\|\Lambda-\Pi_{\mathbb{B}^{\tau}_2}(U+\Lambda)\|_{\F}}{1+\|\Lambda\|_{\F}+\|U\|_{\F}}, \ \eta_{\lambda}=\frac{\|\ \A W+by+v-e_n\|}{1+\sqrt{n}}, \ \eta_{\Lambda}=\frac{\|W-U\|_{\F}}{1+\|W\|_{\F}+\|U\|_{\F}}.
\end{align*}
By default, the termination criteria for Algorithm \ref{alg_ALM_SMM} (denoted as ALM-SNCG) involves either satisfying
$\eta_{kkt}\leq\varepsilon$ or reaching the maximum number 500 of iterations.
In the case where both  algorithms stopped with  different criteria, we used the value of the objective functions obtained
from the ALM-SNCG satisfying $\eta_{kkt}\leq 10^{-8}$ (denoted as $\mbox{obj}_{\scriptsize\mbox{opt}}$) as
benchmarks to evaluate their quality (denoted as $\mbox{obj}_{\scriptsize\mbox{computed}}$):
\begin{equation}\label{eq:Relobj}
	\mbox{Relobj}:=\frac{|\mbox{obj}_{\scriptsize\mbox{computed}}-\mbox{obj}_{\scriptsize\mbox{opt}}|}{1+|\mbox{obj}_{\scriptsize\mbox{opt}}|}.
\end{equation}

Both synthetic and real datasets were utilized in the subsequent experiments.
The sampling data $\{(X_i,y_i)\}^n_{i=1}\subseteq\mathbb{R}^{p\times q}\times\{-1,1\}$ was generated following the process  in \cite{luo2015support}.
For any $k\in[p]$ and $\ell\in[q]$, denote $a_{k\ell}:=\left[ (X_1)_{k\ell},\ldots,(X_n)_{k\ell} \right]^{\top}\in\mathbb{R}^n$. Each vector $a_{k\ell}$ is constructed as:
$$
a_{k\ell}=b_{\lceil r\ell/q\rceil}+\epsilon_{k\ell}\ \mbox{with}\ \epsilon_{k\ell}\sim N(0,\delta^2I_n),\qquad k=1,\ldots,p, \quad \ell=1,\ldots,q.
$$
Here, $b_1, b_2, \ldots, b_r$ are $r$
orthonormal vectors in $\mathbb{R}^n$, and $0<r\leq\min\{p,q\}$.
Given a $p\times q$ matrix $W$ with rank $r$,
each sample's class label is determined by $y_i=\mbox{sign}(\langle W,X_i \rangle)$ for $i\in[n]$, where $\mbox{sign}(x)$ equals $1$ if $x\geq0$ and $-1$ otherwise.
In our experiments, we set $r=20$ and $\delta=2\times 10^{-4}$.
For each data configuration,  the sample sizes $n$ vary from $1.25\times 10^4$ to $1.25\times 10^6$, with $80\%$ used as training data and the remaining $20\%$ used for testing.

We also utilized four distinct real datasets. They are
\begin{itemize}
\item  the electroencephalogram alcoholism dataset
(EEG)\footnote{\url{http://kdd.ics.uci.edu/databases/eeg/eeg.html}} for classifying subjects based on EEG signals as alcoholics or non-alcoholics;
\item the INRIA person dataset (INRIA)\footnote{\url{ftp://ftp.inrialpes.fr/pub/lear/douze/data/INRIAPerson.tar}}
for detecting the presence of individuals in an image;
\item the CIFAR-10 dataset (CIFAR-10)\footnote{\url{https://www.cs.toronto.edu/~kriz/cifar.html}}
for classifying images as depicting either a dog or a truck;
\item the MNIST handwritten digit dataset (MNIST)\footnote{\url{https://yann.lecun.com/exdb/mnist}}
for classifying handwritten digit images as the number 0 or not.
\end{itemize}
Additional information for these datasets is provided in Tabel \ref{Table:real_data},
 where $n_{\scriptsize\mbox{train}}$ and $n_{\scriptsize\mbox{test}}$ denote the number of training and test samples, respectively.

\begin{table}[H]
	\caption{Summary of four real datasets}
	\centering
	\label{Table:real_data}
	\small
		\vspace{1mm}
		\begin{spacing}{1.50}
			\begin{tabular}{ c c c c | c c c c}
				\hline
				datasets & $p\times q$ & $n_{\scriptsize\mbox{train}}$ & $n_{\scriptsize\mbox{test}}$ & datasets & $p\times q$ & $n_{\scriptsize\mbox{train}}$ & $n_{\scriptsize\mbox{test}}$\\ \hline
				EEG & $256\times 64$ & 300 & 66 & CIFAR-10 & $32\times 32$ & 10000 & 2000\\ \hline
				INRIA & $70\times 134$ & 3634 & 1579 & MNIST & $28\times 28$ & 60000 & 10000\\ \hline
			\end{tabular}
	\end{spacing}
\end{table}

In subsequent experiments, we set the parameter $\tau$ in the SMM model (\ref{Model:SMM}) to 10 and 100 for random data, and to 1 and 10 for real data.

\subsection{Solving the SMM model at a fixed parameter value $C$}\label{Section:exp_SMM_fixed_C}

In this subsection, we evaluate four algorithms - ALM-SNCG, the inexact semi-proximal ADMM (isPADMM), the symmetric Gauss-Seidel based isPADMM (sGS-isPADMM),
and Fast ADMM with restart (F-ADMM) - for solving the SMM model (\ref{Model:SMM}) with fixed parameters $\tau$
and $C$.  Additional details on isPADMM and sGS-isPADMM can be found in Appendices \ref{Appendex:isPADMM} and \ref{Appendex:sGS-isPADMM}, respectively.
The F-ADMM is based on the works of Luo et al. \cite{luo2015support}\footnote{Its code is available for download at:
 \url{http://bcmi.sjtu.edu.cn/\textasciitilde luoluo/code/smm.zip}} and Goldstein et al. \cite{goldstein2014fast}.
Each method is subject to a two-hour computational time limit.
For consistent comparison, we adopt the termination criterion $\mbox{Relobj}\leq\varepsilon$, where $\mbox{Relobj}$ is defined in (\ref{eq:Relobj}).
We assess their performance at two accuracy levels: $\varepsilon=10^{-4}$ and $\varepsilon=10^{-6}$.

Firstly, we evaluate the performance of two-block ADMM algorithms (F-ADMM and isPADMM) and three-block ADMM (sGS-isPADMM) for solving the SMM model (\ref{Model:SMM}) on the EEG training dataset. The ``Iteration time" column in Table \ref{tab:Real_data_fixed_C_sGS} represents the average time per iteration for each algorithm.
Numerical results in Table \ref{tab:Real_data_fixed_C_sGS} show that, on average, sGS-isPADMM is 2.6 times faster per iteration than F-ADMM and 52.2 times faster than isPADMM. However, sGS-isPADMM is 7.2 times slower than isPADMM in terms of total time consumed due to having 184.8 times more iteration steps on average, leading to higher SVD costs from the soft thresholding operator. Henceforth, we will focus on comparing the performance of two-block ADMM algorithms and ALM-SNCG on synthetic and real datasets.

\begin{table}[http]
	\caption{Results of sGS-isPADMM (sGS), F-ADMM (F), and isPADMM (isP) with {$\mbox{Relobj}\leq \varepsilon$} on EEG training dataset}
	\centering
	\label{tab:Real_data_fixed_C_sGS}
	\scriptsize
	\resizebox{\linewidth}{!}{
		\begin{tabular}{|c|c|c|c|c|c|c|c|c|c|c|c|c|c|c|}
			\hline
			\multicolumn{1}{ |c| }{\multirow{2}{*}{$\varepsilon$}} & \multicolumn{1}{ c| }{\multirow{2}{*}{$\tau$}} &  \multicolumn{1}{ c| }{\multirow{2}{*}{$C$}} &  \multicolumn{3}{ c| }{\multirow{1}{*}{Relobj}} & \multicolumn{3}{ c| }{\multirow{1}{*}{Iteration numbers}} & \multicolumn{3}{ c| }{\multirow{1}{*}{Time (seconds)}} & \multicolumn{3}{ c| }{\multirow{1}{*}{Iteration time (seconds)}} \\ \cline{4-15}
			& & & sGS & F & isP & sGS & F & isP & sGS & F & isP & sGS & F & isP \\ \hline
			\multicolumn{1}{ |c| }{\multirow{8}{*}{1e-4}} &
			\multicolumn{1}{ c| }{\multirow{4}{*}{1}} &
			1e-4 & 1e-4 & 6e-5 & 9e-5 & 3876 & 477 & 19 & 26.1 & 5.0 & 2.0 & 7e-3 & 1e-2 & 1e-1 \\ \cline{3-15}
			& & 1e-3 & 1e-4 & 1e-4 & 5e-5 & 4251 & 285 & 16 & 27.5 & 4.4 & 3.8 & 6e-3 & 2e-2 & 2e-1\\ \cline{3-15}
			& & 1e-2 & 1e-4 & 5e-4 & 1e-4 & 4329 & 30000 & 22 & 28.5 & 616.3 & 11.3 & 7e-3 & 2e-2 & 5e-1\\ \cline{3-15}
			& & 1e-1 & 1e-4 & 5e-3 & 8e-5 & 4317 & 30000 & 23 & 27.7 & 616.2 & 16.1 & 6e-3 & 2e-2 & 7e-1\\ \cline{2-15}
			& \multicolumn{1}{ c| }{\multirow{4}{*}{10}} &
			1e-4 & 6e-5 & 6e-5 & 6e-5 & 3324 & 117 & 20 & 20.6 & 1.2 & 6.0 & 6e-3 & 1e-2 & 3e-1\\ \cline{3-15}
			& & 1e-3 & 1e-4 & 1e-4 & 1e-4 & 8818 & 8141 & 111 & 55.0 & 100.2 & 9.7 & 6e-3 & 1e-2 & 9e-2\\ \cline{3-15}
			& & 1e-2 & 1e-4 & 1e-4 & 9e-5 & 7986 & 2841 & 93 & 49.9 & 58.5 & 22.1 & 6e-3 & 2e-2 & 2e-1\\ \cline{3-15}
			& & 1e-1 & 1e-4 & 1e-1 & 1e-4 & 5428 & 30000 & 63 & 33.9 & 621.1 & 31.1 & 6e-3 & 2e-2 & 5e-1\\ \hline
			
			\multicolumn{1}{ |c| }{\multirow{8}{*}{1e-6}} &
			\multicolumn{1}{ c| }{\multirow{4}{*}{1}} &
			1e-4 &  1e-6 & 1e-6 & 9e-7 & 23861 & 25759 & 45 & 151.5 & 279.1 & 3.3 & 6e-3 & 1e-2 & 7e-2\\ \cline{3-15}
			& & 1e-3 & 1e-6 & 1e-6 & 8e-7 & 7147 & 501 & 25 & 44.2 & 9.0 & 5.5 & 6e-3 & 2e-2 & 2e-1\\ \cline{3-15}
			& & 1e-2 & 1e-6 & 5e-4 & 7e-7 & 7614 & 30000 & 48 & 48.5 & 621.7 & 21.7 & 6e-3 & 2e-2 & 5e-1\\ \cline{3-15}
			& & 1e-1 & 1e-6 & 5e-3 & 9e-7 & 7544 & 30000 & 44 & 47.1 & 623.7 & 29.4 & 6e-3 & 2e-2 & 7e-1\\ \cline{2-15}
			& \multicolumn{1}{ c| }{\multirow{4}{*}{10}} &
			1e-4 & 8e-7 & 7e-7 & 6e-7 & 4268 & 121 & 28 & 26.4 & 1.3 & 11.6 & 6e-3 & 1e-2 & 4e-1\\ \cline{3-15}
			& & 1e-3 & 1e-6 & 1e-6 & 1e-6 & 26985 & 21117 & 151 & 168.4 & 259.1 & 12.5 & 6e-3 & 1e-2 & 8e-2\\ \cline{3-15}
			& & 1e-2 & 1e-6 & 1e-6 & 7e-7 & 11853 & 5363 & 111 & 74.2 & 109.6 & 25.9 & 6e-3 & 2e-2 & 2e-1\\ \cline{3-15}
			& & 1e-1 & 1e-6 & 6e-2 & 4e-7 & 9044 & 30000 & 89 & 56.5 & 617.2 & 41.1 & 6e-3 & 2e-2 & 5e-1\\ \hline
	\end{tabular}}
\end{table}

It is worth mentioning that ALM-SNCG is initialized with a lower-accuracy starting point from isPADMM, using up to four iterations for synthetic datasets and up to ten iterations for real datasets. Otherwise, the origin is used as the default starting point for the algorithms.
Tables \ref{tab: random_fixed_C} and \ref{tab:Real_data_fixed_C} present the numerical performance of the above three algorithms,  including the following information:
\begin{itemize}
	\item $|\J_1|$: the cardinality of the index set $\J_1$ defined in (\ref{eq:def_J_1});
	\item $|\alpha|$: the cardinality of the index set $\alpha$ defined in (\ref{eq:alp_beta_gamma});
	\item $\mbox{Accuracy}{\scriptsize\mbox{test}}$: the classification accuracy on test set for solutions from each algorithm, i.e.,
	\begin{equation}\label{eq:accu_test}
		\mbox{Accuracy}_{\scriptsize\mbox{test}}:=|\{i\ |\ \hat{y}_i=(y_{\scriptsize\mbox{test}})_i,\ i=1,\ldots,n_{\scriptsize\mbox{test}}\}|/n_{\scriptsize\mbox{test}}.
	\end{equation}
	Here, $\hat{y}$ represents the predicted class label vector, and $y_{\scriptsize\mbox{test}}$ denotes the true class label vector for the test set.
\end{itemize}

\subsubsection{Numerical results on synthetic data}\label{Section:exp_SMM_fixed_C_random}

Table \ref{tab: random_fixed_C} shows the computational results for ALM-SNCG, isPADMM, and F-ADMM on randomly
generated data with $C$ ranging from $0.1$ to $100$.
The results indicate that ALM-SNCG and isPADMM can solve all problems
within two hours. 
 In contrast, F-ADMM encountered memory trouble when the sample sizes exceed $10^5$. 
For $(n, p, q, \tau, C)=(10000, 1000, 500, 100, 100)$, F-ADMM exibited lower classification accuracy
($\mbox{Accuracy}_{\scriptsize\mbox{test}}$) compared to isPADMM and ALM-SNCG due to its inability to achive the desired solution accuracy.
Even for the smallest instance $(n, p, q)=(10000, 100, 100)$, F-ADMM is, on average, 101.2 times slower
than ALM-SNCG to reach $\mbox{Relobj}\leq 10^{-4}$ and 210.2 times slower to reach $\mbox{Relobj}\leq 10^{-6}$.
On the other hand, ALM-SNCG is on average 8.7 times faster than isPADMM for $\varepsilon=10^{-4}$ and 13.0 times
faster for $\varepsilon=10^{-6}$. It shows that  ALM-SNCG is  more efficient and stable, particularly for high-accuracy solutions.

\begin{table}[http]
	\caption{Results of F-ADMM (F), isPADMM (isP), and ALM-SNCG (A) with $\mbox{Relobj}\leq\varepsilon$ on random data}
	\centering
	\label{tab: random_fixed_C}
	\scriptsize
	\resizebox{\linewidth}{!}{
		\begin{tabular}{|c|c|c|c|c|c|c|c|c|c|c|c|c|c|c|}
			\hline
			Data & \multicolumn{1}{ c| }{\multirow{2}{*}{$\varepsilon$}} & \multicolumn{1}{ c| }{\multirow{2}{*}{$\tau$}} & \multicolumn{1}{ c| }{\multirow{2}{*}{$C$}} & \multicolumn{1}{ c| }{\multirow{2}{*}{$|\mathcal{J}_1|$}}  & \multicolumn{1}{ c| }{\multirow{2}{*}{$|\alpha|$}} & \multicolumn{3}{ c| }{\multirow{1}{*}{$\mbox{Accuracy}_{\scriptsize\mbox{test}}$}} & \multicolumn{3}{ c| }{\multirow{1}{*}{Relobj}}& \multicolumn{3}{ c| }{\multirow{1}{*}{Time (seconds)}} \\ \cline{7-15}
			$(n, p, q)$ & & & & & & F & isP & A & F & isP & A & F & isP & A  \\ \hline
			
			\multicolumn{1}{ |c| }{\multirow{16}{*}{(1e4, 1e2, 1e2)}} & \multicolumn{1}{ c| }{\multirow{8}{*}{1e-4}} &
			\multicolumn{1}{ c| }{\multirow{4}{*}{10}} &
			0.1 & 27 & 1 & 0.9876 & 0.9872 & 0.9872 & 8e-5 & 8e-5 & 3e-5 & 11.2 & 18.2 & 1.6\\ \cline{4-15}
			& & & 1 & 65 & 1 & 0.9932 & 0.9932 & 0.9936 & 8e-5 & 1e-4 & 8e-5 & 13.5 & 13.5 & 1.7\\ \cline{4-15}
			& & & 10 & 46 & 1 & 0.9928 & 0.9928 & 0.9928 & 4e-5 & 3e-5 & 1e-4 & 26.0 & 20.6 & 1.6\\ \cline{4-15}
			& & & 100 & 26 & 1 & 0.9936 & 0.9936 & 0.9936 & 9e-5 & 6e-5 & 4e-5 & 28.1 & 21.7 & 7.9\\ \cline{3-15}
			& & \multicolumn{1}{ c| }{\multirow{4}{*}{100}} &
			0.1 & 19 & 1 & 0.9844 & 0.9844 & 0.9848 & 8e-5 & 8e-5 & 1e-4 & 14.8 & 10.4 & 2.2\\ \cline{4-15}
			& & & 1 & 42 & 1 & 0.9880 & 0.9880 & 0.9880 & 3e-5 & 6e-5 & 8e-5 & 10.5 & 17.1 & 2.3\\ \cline{4-15}
			& & & 10 & 39 & 1 & 0.9932 & 0.9932 & 0.9932 & 1e-4 & 9e-5 & 7e-5 & 26.3 & 69.1 & 1.7\\ \cline{4-15}
			& & & 100 & 37 & 1 & 0.9932 & 0.9932 & 0.9932 & 1e-4 & 9e-5 & 8e-5 & 1526.2 & 37.5 & 2.0\\ \cline{2-15}
			
			& \multicolumn{1}{ c| }{\multirow{8}{*}{1e-6}} &
			\multicolumn{1}{ c| }{\multirow{4}{*}{10}} &
			0.1 & 21 & 1 & 0.9872 & 0.9872 & 0.9872 & 4e-7 & 8e-7 & 8e-7 & 21.7 & 29.4 & 2.4\\ \cline{4-15}
			& & & 1 & 20 & 1 & 0.9932 & 0.9932 & 0.9932 & 4e-7 & 8e-7 & 8e-7 & 21.2 & 27.1 & 5.5\\ \cline{4-15}
			& & & 10 & 23 & 1 & 0.9928 & 0.9928 & 0.9928 & 7e-7 & 3e-7 & 8e-7 & 33.1 & 37.0 & 4.3\\ \cline{4-15}
			& & & 100 & 22 & 1 & 0.9936 & 0.9936 & 0.9936 & 3e-5 & 7e-7 & 9e-7 & {\bf7200.6} & 42.4 & 14.0\\ \cline{3-15}
			& & \multicolumn{1}{ c| }{\multirow{4}{*}{100}} &
			0.1 & 11 & 1 & 0.9848 & 0.9848 & 0.9848 & 9e-7 & 7e-7 & 8e-7 & 29.5 & 18.9 & 4.9\\ \cline{4-15}
			& & & 1 & 18 & 1 & 0.9880 & 0.9880 & 0.9880 & 8e-7 & 8e-7 & 9e-7 & 15.4 & 25.6 & 5.3\\ \cline{4-15}
			& & & 10 & 23 & 1 & 0.9932 & 0.9932 & 0.9932 & 7e-7 & 4e-7 & 1e-6 & 38.7 & 46.4 & 6.0\\ \cline{4-15}
			& & & 100 &22 & 1 & 0.9932 & 0.9932 & 0.9932 & 5e-4 & 8e-7 & 7e-7 & {\bf7200.0} & 116.7 & 6.4\\ \hline
			
			\multicolumn{1}{ |c| }{\multirow{16}{*}{(1e4, 1e3, 5e2)}} & \multicolumn{1}{ c| }{\multirow{8}{*}{1e-4}} &
			\multicolumn{1}{ c| }{\multirow{4}{*}{10}} &
			0.1 & 22 & 1 & 0.9940 & 0.9940 & 0.9940 & 6e-5 & 9e-5 & 8e-5 & 421.3 & 2020.6 & 237.8\\ \cline{4-15}
			& & & 1 & 23 & 1 & 0.9924 & 0.9924 & 0.9924 & 3e-6 & 8e-5 & 9e-5 & 536.0 & 1082.0 & 295.1\\ \cline{4-15}
			& & & 10 & 23 & 1 & 0.9924 & 0.9924 & 0.9924 & 2e-3 & 8e-5 & 9e-5 & {\bf7200.1} & 1565.1 & 255.8\\ \cline{4-15}
			& & & 100 & 32 & 107 & 0.9888 & 0.9936 & 0.9936 & 2e+0 & 9e-5 & 8e-5 & {\bf7202.5} & 1249.0 & 864.5\\ \cline{3-15}
			& & \multicolumn{1}{ c| }{\multirow{4}{*}{100}} &
			0.1 & 20 & 1 & 0.9880 & 0.9880 & 0.9880 & 9e-5 & 8e-5 & 2e-5 & 556.5 & 2030.1 & 243.7\\ \cline{4-15}
			& & & 1 & 24 & 1 & 0.9940 & 0.9940 & 0.9940 & 4e-3 & 7e-5 & 7e-5 & {\bf7202.7} & 1229.2 & 242.3\\ \cline{4-15}
			& & & 10 & 23 & 1 & 0.9936 & 0.9936 & 0.9936 & 3e-2 & 4e-5 & 6e-5 & {\bf7202.5} & 2050.7 & 258.6\\ \cline{4-15}
			& & & 100 & 23 & 1 & 0.9836 & 0.9920 & 0.9920 & 2e+0 & 7e-5 & 8e-5 & {\bf7200.7} & 2118.4 & 510.0\\ \cline{2-15}
			
			& \multicolumn{1}{ c| }{\multirow{8}{*}{1e-6}} &
			\multicolumn{1}{ c| }{\multirow{4}{*}{10}} &
			0.1 & 20 & 1 & 0.9940 & 0.9940 & 0.9940 & 9e-7 & 1e-6 & 2e-7 & 525.8 & 2353.6 & 334.3\\ \cline{4-15}
			& & & 1 & 21 & 1 & 0.9924 & 0.9924 & 0.9924 & 7e-5 & 3e-7 & 4e-7 & {\bf7203.5} & 1580.2 & 517.2\\ \cline{4-15}
			& & & 10 & 21 & 1 & 0.9924 & 0.9924 & 0.9924 & 2e-3 & 4e-7 & 7e-7 & {\bf7201.9} & 3531.0 & 413.7\\ \cline{4-15}
			& & & 100 & 32 & 107 & 0.9916 & 0.9936 & 0.9936 & 2e+0 & 7e-7 & 6e-7 & {\bf7202.8} & 2177.1 & 1229.5\\ \cline{3-15}
			& & \multicolumn{1}{ c| }{\multirow{4}{*}{100}} &
			0.1 & 20 & 1 & 0.9880 & 0.9880 & 0.9880 & 9e-7 & 8e-7 & 7e-7 & 969.2 & 2685.1 & 265.5\\ \cline{4-15}
			& & & 1 & 21 & 1 & 0.9940 & 0.9940 & 0.9940 & 4e-3 & 7e-7 & 4e-7 & {\bf7201.5} & 3305.9 & 411.1\\ \cline{4-15}
			& & & 10 & 22 & 1 & 0.9936 & 0.9936 & 0.9936 & 3e-2 & 9e-7 & 9e-7 & {\bf7204.6} & 6706.1 & 371.6\\ \cline{4-15}
			& & & 100 & 21 & 1 & 0.9792 & 0.9920 & 0.9920 & 3e+0 & 6e-7 & 4e-7 & {\bf7200.6} & 3331.2 & 628.9\\ \hline
			
			\multicolumn{1}{ |c| }{\multirow{16}{*}{(1e5, 50, 1e2)}} & \multicolumn{1}{ c| }{\multirow{8}{*}{1e-4}} &
			\multicolumn{1}{ c| }{\multirow{4}{*}{10}} &
			0.1 & 197 & 1 & - & 0.9681 & 0.9683 & - & 7e-5 & 5e-5 & - & 27.6 & 3.6\\ \cline{4-15}
			& & & 1 & 636 & 1 & - & 0.9682 & 0.9682 & - & 5e-5 & 2e-5 & - & 33.1 & 6.8\\ \cline{4-15}
			& & & 10 & 502 & 1 & - & 0.9686 & 0.9687 & - & 4e-5 & 4e-5 & - & 38.5 & 15.2\\ \cline{4-15}
			& & & 100 & 430 & 45 & - & 0.9692 & 0.9693 & - & 7e-5 & 4e-5 & - & 93.4 & 88.8\\ \cline{3-15}
			& & \multicolumn{1}{ c| }{\multirow{4}{*}{100}} &
			0.1 & 189 & 1 & - & 0.9682 & 0.9681 & - & 7e-5 & 5e-5 & - & 56.9 & 3.6\\ \cline{4-15}
			& & & 1 & 568 & 1 & - & 0.9685 & 0.9685 & - & 9e-5 & 7e-5 & - & 64.6 & 6.7\\ \cline{4-15}
			& & & 10 & 590 & 1 & - & 0.9685 & 0.9685 & - & 6e-5 & 7e-5 & - & 76.9 & 4.8\\ \cline{4-15}
			& & & 100 & 388 & 1 & - & 0.9690 & 0.9690 & - & 7e-5 & 2e-5 & - & 111.5 & 38.1\\ \cline{2-15}
			
			& \multicolumn{1}{ c| }{\multirow{8}{*}{1e-6}} &
			\multicolumn{1}{ c| }{\multirow{4}{*}{10}} &
			0.1 & 64 & 1 & - & 0.9683 & 0.9683 & - & 1e-6 & 9e-7 & - & 365.7 & 7.0\\ \cline{4-15}
			& & & 1 & 97 & 1 & - & 0.9682 & 0.9682 & - & 1e-6 & 9e-7 & - & 278.2 & 13.9\\ \cline{4-15}
			& & & 10 & 93 & 1 & - & 0.9687 & 0.9686 & - & 9e-7 & 8e-7 & - & 96.6 & 22.4\\ \cline{4-15}
			& & & 100 & 117 & 45 & - & 0.9693 & 0.9693 & - & 1e-6 & 9e-7 & - & 282.0 & 104.6\\ \cline{3-15}
			& & \multicolumn{1}{ c| }{\multirow{4}{*}{100}} &
			0.1 & 36 & 1 & - & 0.9682 & 0.9682 & - & 7e-7 & 9e-7 & - & 324.6 & 10.3\\ \cline{4-15}
			& & & 1 & 48 & 1 & - & 0.9684 & 0.9685 & - & 9e-7 & 9e-7 & - & 654.4 & 19.8\\ \cline{4-15}
			& & & 10 & 66 & 1 & - & 0.9685 & 0.9685 & - & 6e-7 & 8e-7 & - & 171.1 & 15.2\\ \cline{4-15}
			& & & 100 & 87 & 1 & - & 0.9690 & 0.9690 & - & 8e-7 & 1e-6 & - & 417.5 & 45.2\\ \hline
			
			\multicolumn{1}{ |c| }{\multirow{16}{*}{(1e6, 50, 1e2)}} & \multicolumn{1}{ c| }{\multirow{8}{*}{1e-4}} &
			\multicolumn{1}{ c| }{\multirow{4}{*}{10}} &
			0.1 & 1935 & 1 & - & 0.9018 & 0.9017 & - & 5e-5 & 5e-5 & - & 194.2 & 29.0\\ \cline{4-15}
			& & & 1 & 11357 & 1 & - & 0.9017 & 0.9017 & - & 4e-5 & 3e-5 & - & 167.5 & 60.0\\ \cline{4-15}
			& & & 10 & 9267 & 30 & - & 0.9074 & 0.9078 & - & 7e-5 & 4e-6 & - & 694.4 & 271.5\\ \cline{4-15}
			& & & 100 & 13027 & 50 & - & 0.9445 & 0.9448 & - & 7e-5 & 5e-5 & - & 2346.5 & 67.0\\ \cline{3-15}
			& & \multicolumn{1}{ c| }{\multirow{4}{*}{100}} &
			0.1 & 1348 & 1 & - & 0.9017 & 0.9017 & - & 8e-5 & 4e-5 & - & 339.2 & 34.7\\ \cline{4-15}
			& & & 1 & 7545 & 1 & - & 0.9018 & 0.9018 & - & 8e-5 & 6e-5 & - & 370.5 & 49.4\\ \cline{4-15}
			& & & 10 & 8750 & 1 & - & 0.9018 & 0.9018 & - & 7e-5 & 7e-6 & - & 327.2 & 154.8\\ \cline{4-15}
			& & & 100 & 12252 & 29 & - & 0.9408 & 0.9410 & - & 9e-5 & 3e-5 & - & 1702.6 & 1136.2\\ \cline{2-15}
			
			& \multicolumn{1}{ c| }{\multirow{8}{*}{1e-6}} & \multicolumn{1}{ c| }{\multirow{4}{*}{10}} &
			0.1 & 862 & 1 & - & 0.9018 & 0.9017 & - & 7e-7 & 7e-7 & - & 372.7 & 42.7\\ \cline{4-15}
			& & & 1 & 3072 & 2 & - & 0.9017 & 0.9017 & - & 1e-6 & 9e-7 & - & 1222.0 & 89.9\\ \cline{4-15}
			& & & 10 & 4294 & 30 & - & 0.9077 & 0.9077 & - & 1e-6 & 9e-7 & - & 2003.1 & 290.6\\ \cline{4-15}
			& & & 100 & 2409 & 50 & - & 0.9450 & 0.9450 & - & 7e-7 & 9e-7 & - & 5413.7 & 224.1\\ \cline{3-15}
			& & \multicolumn{1}{ c| }{\multirow{4}{*}{100}} &
			0.1 & 312 & 1 & - & 0.9017 & 0.9017 & - & 9e-7 & 8e-7 & - & 1703.4 & 72.9\\ \cline{4-15}
			& & & 1 & 835 & 1 & - & 0.9018 & 0.9018 & - & 8e-7 & 1e-6 & - & 4299.5 & 127.2\\ \cline{4-15}
			& & & 10 & 2319 & 1 & - & 0.9018 & 0.9018 & - & 6e-7 & 9e-7 & - & 2320.4 & 187.4\\ \cline{4-15}
			& & & 100 & 1833 & 29 & - & 0.9413 & 0.9413 & - & 8e-7 & 9e-7 & - & 5371.3 & 1289.7\\ \hline
	\end{tabular}}
\end{table}


\subsubsection{Numerical results on real data}\label{SMM_fixed_C_real_data}

Table \ref{tab:Real_data_fixed_C} lists the results for ALM-SNCG, F-ADMM, and isPADMM on the model (\ref{Model:SMM})
using four real datasets described in Table \ref{Table:real_data}, with the parameter $C$ ranging from $10^{-4}$ to $100$. 
The results in the table show that both ALM-SNCG and isPADMM achieved high accuracy ($\mbox{Relobj}\leq 10^{-6}$) in all instances,
while F-ADMM only stopped successfully for $56\%$ problems with accuracy $\mbox{Relobj}\leq 10^{-4}$, and for  $47\%$
problems with accuracy  $\mbox{Relobj}\leq 10^{-6}$.
Table \ref{tab:Real_data_fixed_C} also reveals that ALM-SNCG performed on average 2.6 times (5.9 times) faster than
isPADMM and 422.7 times (477.2 times) faster than F-ADMM when the  stop criteria was set to  $\mbox{Relobj}\leq 10^{-4}$ ($\mbox{Relobj}\leq 10^{-6}$).

	\begin{table}[http]
	\caption{Results of F-ADMM (F), isPADMM (isP), and ALM-SNCG (A) with {$\mbox{Relobj}\leq \varepsilon$} on real data}
	\centering
	\label{tab:Real_data_fixed_C}
	\scriptsize
	\resizebox{\linewidth}{!}{
		\begin{tabular}{|c|c|c|c|c|c|c|c|c|c|c|c|c|c|c|}
			\hline
			Prob & \multicolumn{1}{ c| }{\multirow{2}{*}{$\varepsilon$}} & \multicolumn{1}{ c| }{\multirow{2}{*}{$\tau$}} &  \multicolumn{1}{ c| }{\multirow{2}{*}{$C$}} & \multicolumn{1}{ c| }{\multirow{2}{*}{$|\mathcal{J}_1|$}}  & \multicolumn{1}{ c| }{\multirow{2}{*}{$|\alpha|$}} & \multicolumn{3}{ c| }{\multirow{1}{*}{$\mbox{Accuracy}_{\scriptsize\mbox{test}}$}} & \multicolumn{3}{ c| }{\multirow{1}{*}{Relobj}} & \multicolumn{3}{ c| }{\multirow{1}{*}{Time (seconds)}} \\ \cline{7-15}
			$(n, p, q)$ & & & & & & F & isP & A & F & isP & A & F & isP & A \\ \hline
			\multicolumn{1}{ |c| }{\multirow{16}{*}{$\begin{array}{c}
						\mbox{EEG}\\(300,256,64)\end{array}$}} & \multicolumn{1}{ c| }{\multirow{8}{*}{1e-4}} &
			\multicolumn{1}{ c| }{\multirow{4}{*}{1}} &
			1e-4 & 6 & 2 & 0.7424 & 0.7424 & 0.7424 & 6e-5 & 9e-5 & 6e-5 & 5.0 & 2.0 & 2.1 \\ \cline{4-15}
			& & & 1e-3 & 60 & 5 & 0.8636 & 0.8636 & 0.8636 & 1e-4 & 5e-5 & 6e-5 & 4.4 & 3.8 & 4.5\\ \cline{4-15}
			& & & 1e-2 & 123 & 5 & 0.9394 & 0.9545 & 0.9394 & 5e-4 & 1e-4 & 7e-5 & 616.3 & 11.3 & 6.9\\ \cline{4-15}
			& & & 1e-1 & 123 & 5 & 0.9545 & 0.9545 & 0.9394 & 5e-3 & 8e-5 & 9e-5 & 616.2 & 16.1 & 12.1\\ \cline{3-15}
			& & \multicolumn{1}{ c| }{\multirow{4}{*}{10}} &
			1e-4 & 95 & 0 & 0.6667 & 0.6667 & 0.6667 & 6e-5 & 6e-5 & 4e-6 & 1.2 & 6.0 & 2.7\\ \cline{4-15}
			& & & 1e-3 & 6 & 2 & 0.7424 & 0.7424 & 0.7424 & 1e-4 & 1e-4 & 9e-5 & 100.2 & 9.7 & 5.7\\ \cline{4-15}
			& & & 1e-2 & 60 & 5 & 0.8636 & 0.8636 & 0.8636 & 1e-4 & 9e-5 & 6e-5 & 58.5 & 22.1 & 9.2\\ \cline{4-15}
			& & & 1e-1 & 124 & 5 & 0.9394 & 0.9394 & 0.9394 & 1e-1 & 1e-4 & 6e-5 & 621.1 & 31.1 & 10.8\\ \cline{2-15}
			
			& \multicolumn{1}{ c| }{\multirow{8}{*}{1e-6}} &
			\multicolumn{1}{ c| }{\multirow{4}{*}{1}} &
			1e-4 & 6 & 2 & 0.7424 & 0.7424 & 0.7424 & 1e-6 & 9e-7 & 7e-7 & 279.1 & 3.3 & 2.2\\ \cline{4-15}
			& & & 1e-3 & 60 & 5 & 0.8636 & 0.8636 & 0.8636 & 1e-6 & 8e-7 & 9e-7 & 9.0 & 5.5 & 4.7\\ \cline{4-15}
			& & & 1e-2 &123 & 5 & 0.9394 & 0.9394 & 0.9394 & 5e-4 & 7e-7 & 6e-7 & 621.7 & 21.7 & 8.2\\ \cline{4-15}
			& & & 1e-1 & 123 & 5 & 0.9394 & 0.9394 & 0.9394 & 5e-3 & 9e-7 & 5e-7 & 623.7 & 29.4 & 12.2\\ \cline{3-15}
			& & \multicolumn{1}{ c| }{\multirow{4}{*}{10}} &
			1e-4 & 91 & 0 & 0.6667 & 0.6667 & 0.6667 & 7e-7 & 6e-7 & 5e-8 & 1.3 & 11.6 & 2.6\\ \cline{4-15}
			& & & 1e-3 & 6 & 2 & 0.7424 & 0.7424 & 0.7424 & 1e-6 & 1e-6 & 7e-7 & 259.1 & 12.5 & 6.1\\ \cline{4-15}
			& & & 1e-2 & 60 & 5 & 0.8636 & 0.8636 & 0.8636 & 1e-6 & 7e-7 & 5e-7 & 109.6 & 25.9 & 10.2\\ \cline{4-15}
			& & & 1e-1 & 123 & 5 & 0.9394 & 0.9394 & 0.9394 & 6e-2 & 4e-7 & 5e-7 & 617.2 & 41.1 & 15.8\\ \hline

			\multicolumn{1}{ |c| }{\multirow{16}{*}{$\begin{array}{c}
						\mbox{INRIA}\\(3634,70,134)\end{array}$}} & \multicolumn{1}{ c| }{\multirow{8}{*}{1e-4}} &
			\multicolumn{1}{ c| }{\multirow{4}{*}{1}} &
			1e-3 & 12 & 2 & 0.8892 & 0.8892 & 0.8892 & 1e-4 & 6e-5 & 1e-4 & 5.4 & 6.7 & 5.6\\ \cline{4-15}
			& & & 1e-2 & 100 & 6 & 0.8898 & 0.8898 & 0.8898 & 8e-5 & 1e-4 & 1e-4 & 6.0 & 10.6 & 6.5\\ \cline{4-15}
			& & & 1e-1 & 807 & 35 & 0.8638 & 0.8645 & 0.8645 & 5e-3 & 1e-4 & 7e-5 & {\bf 7200.1} & 103.5 & 37.3\\ \cline{4-15}
			& & & 1 & 1080 & 44 & 0.8385 & 0.8379 & 0.8379 & 8e-1 & 8e-5 & 7e-5 & {\bf 7200.1} & 173.8 & 44.2\\ \cline{3-15}
			& & \multicolumn{1}{ c| }{\multirow{4}{*}{10}} &
			1e-3 & 1716 & 0 & 0.7131 & 0.7131 & 0.7131 & 9e-5 & 8e-5 & 7e-5 & 392.7 & 328.7 & 23.9\\ \cline{4-15}
			& & & 1e-2 & 17 & 2 & 0.8904 & 0.8904 & 0.8904 & 1e-4 & 9e-5 & 9e-5 & 21.8 & 13.5 & 8.2\\ \cline{4-15}
			& & & 1e-1 & 123 & 6 & 0.8879 & 0.8879 & 0.8879 & 1e-4 & 9e-5 & 9e-5 & 43.8 & 30.0 & 17.1\\ \cline{4-15}
			& & & 1 & 938 & 27 & 0.8493 & 0.8436 & 0.8461 & 3e-1 & 1e-4 & 7e-5 & {\bf 7200.2} & 148.5 & 123.4\\ \cline{2-15}
			
			& \multicolumn{1}{ c| }{\multirow{8}{*}{1e-6}} &
			\multicolumn{1}{ c| }{\multirow{4}{*}{1}} &
			1e-3 & 11 & 2 & 0.8892 & 0.8892 & 0.8892 & 9e-7 & 8e-7 & 8e-7 & 6.7 & 16.3 & 6.2\\ \cline{4-15}
			& & & 1e-2 & 98 & 6 & 0.8898 & 0.8898 & 0.8898 & 9e-7 & 9e-7 & 7e-7 & 12.6 & 26.7 & 8.2\\ \cline{4-15}
			& & & 1e-1 & 807 & 35 & 0.8645 & 0.8645 & 0.8645 & 5e-3 & 1e-6 & 9e-7 & {\bf 7200.0} & 416.1 & 88.2	\\ \cline{4-15}
			& & & 1 & 1080 & 44 & 0.8391 & 0.8379 & 0.8379 & 8e-1 & 1e-6 & 1e-6 & {\bf 7200.2} & 502.1 & 59.1\\ \cline{3-15}
			& & \multicolumn{1}{ c| }{\multirow{4}{*}{10}} &
			1e-3 &2028 & 0 & 0.7131 & 0.7131 & 0.7131 & 3e-5 & 7e-7 & 8e-7 & {\bf 7200.1} & 1508.3 & 26.0				\\ \cline{4-15}
			& & & 1e-2 & 17 & 2 & 0.8911 & 0.8911 & 0.8911 & 1e-6 & 8e-7 & 9e-7 & 49.1 & 19.8 & 9.1\\ \cline{4-15}
			& & & 1e-1 & 123 & 6 & 0.8879 & 0.8879 & 0.8879 & 1e-6 & 9e-7 & 9e-7 & 142.8 & 48.6 & 19.9\\ \cline{4-15}
			& & & 1 & 937 & 27 & 0.8436 & 0.8461 & 0.8461 & 4e-1 & 1e-6 & 8e-7 & {\bf 7200.0} & 362.8 & 174.4\\ \hline
			
			\multicolumn{1}{ |c| }{\multirow{16}{*}{$\begin{array}{c}
						\mbox{CIFAR-10}\\(10000,32,32)\end{array}$}} & \multicolumn{1}{ c| }{\multirow{8}{*}{1e-4}} &
			\multicolumn{1}{ c| }{\multirow{4}{*}{1}} &
			1e-3 & 15 & 2 & 0.8015 & 0.8015 & 0.8020 & 9e-5 & 9e-5 & 1e-4 & 14.2 & 2.0 & 1.3\\ \cline{4-15}
			& & & 1e-2 & 62 & 5 & 0.8245 & 0.8245 & 0.8245 & 9e-5 & 8e-5 & 1e-4 & 8.3 & 1.7 & 1.1\\ \cline{4-15}
			& & & 1e-1 & 319 & 19 & 0.8205 & 0.8210 & 0.8190 & 1e-4 & 9e-5 & 7e-5 & 26.5 & 4.6 & 1.5\\ \cline{4-15}
			& & & 1 & 751 & 30 & 0.7790 & 0.7990 & 0.8020 & 4e-1 & 9e-5 & 7e-5 & {\bf 7200.6} & 12.5 & 2.1\\ \cline{3-15}
			& & \multicolumn{1}{ c| }{\multirow{4}{*}{10}} &
			1e-3 & 5 & 1 & 0.7645 & 0.7645 & 0.7645 & 1e-4 & 6e-5 & 8e-5 & 25.9 & 2.2 & 1.5\\ \cline{4-15}
			& & & 1e-2 & 15 & 2 & 0.8050 & 0.8050 & 0.8050 & 1e-4 & 7e-5 & 1e-4 & 26.2 & 2.1 & 1.6	\\ \cline{4-15}
			& & & 1e-1 & 70 & 5 & 0.8235 & 0.8235 & 0.8235 & 1e-4 & 9e-5 & 8e-5 & 43.9 & 2.4 & 1.9 \\ \cline{4-15}
			& & & 1 & 457 & 20 & 0.7805 & 0.8170 & 0.8180 & 2e-1 & 8e-5 & 9e-5 & {\bf 7200.3} & 4.2 & 1.9\\ \cline{2-15}
			
			& \multicolumn{1}{ c| }{\multirow{8}{*}{1e-6}} &
			\multicolumn{1}{ c| }{\multirow{4}{*}{1}} &
			1e-3 & 11 & 2 & 0.8015 & 0.8020 & 0.8015 & 9e-7 & 6e-7 & 7e-7 & 19.4 & 6.4 & 1.5\\ \cline{4-15}
			& & & 1e-2 &55 & 5 & 0.8245 & 0.8245 & 0.8245 & 9e-7 & 5e-7 & 7e-7 & 19.7 & 6.8 & 1.6\\ \cline{4-15}
			& & & 1e-1 & 304 & 19 & 0.8200 & 0.8200 & 0.8195 & 4e-5 & 9e-7 & 8e-7 & {\bf 7200.3} & 17.6 & 1.6\\ \cline{4-15}
			& & & 1 & 723 & 30 & 0.7940 & 0.8015 & 0.8015 & 2e-1 & 9e-7 & 9e-7 & {\bf 7200.4} & 42.0 & 3.6\\ \cline{3-15}
			& & \multicolumn{1}{ c| }{\multirow{4}{*}{10}} &
			1e-3 & 6 & 1 & 0.7645 & 0.7645 & 0.7645 & 1e-6 & 7e-7 & 1e-6 & 79.3 & 4.3 & 2.0\\ \cline{4-15}
			& & & 1e-2 & 12 & 2 & 0.8050 & 0.8050 & 0.8050 & 1e-6 & 8e-7 & 7e-7 & 92.2 & 7.5 & 2.1\\ \cline{4-15}
			& & & 1e-1 & 62 & 5 & 0.8235 & 0.8235 & 0.8235 & 1e-6 & 1e-6 & 8e-7 & 146.9 & 7.3 & 2.2\\ \cline{4-15}
			& & & 1 & 438 & 20 & 0.8020 & 0.8175 & 0.8170 & 3e-1 & 9e-7 & 8e-7 & {\bf 7200.2} & 13.5 & 2.9\\ \hline
			
			\multicolumn{1}{ |c| }{\multirow{16}{*}{$\begin{array}{c}
						\mbox{MNIST}\\(60000,28,28)\end{array}$}} & \multicolumn{1}{ c| }{\multirow{8}{*}{1e-4}} &
			\multicolumn{1}{ c| }{\multirow{4}{*}{1}} &
			1e-1 & 212 & 14 & 0.9928 & 0.9928 & 0.9927 & 9e-5 & 9e-5 & 9e-5 & 225.0 & 12.8 & 4.0\\ \cline{4-15}
			& & & 1 & 422 & 22 & 0.9930 & 0.9930 & 0.9927 & 2e-3 & 9e-5 & 1e-4 & {\bf 7201.4} & 16.9 & 5.7\\ \cline{4-15}
			& & & 1e+1 & 498 & 25 & 0.9764 & 0.9922 & 0.9922 & 6e+0 & 8e-5 & 6e-5 & {\bf 7201.0} & 36.4 & 9.6\\ \cline{4-15}
			& & & 1e+2 & 549 & 26 & 0.9540 & 0.9917 & 0.9915 & 2e+1 & 9e-5 & 9e-5 & {\bf 7201.6} & 42.2 & 16.7\\ \cline{3-15}
			& & \multicolumn{1}{ c| }{\multirow{4}{*}{10}} &
			1e-1 & 91 & 5 & 0.9924 & 0.9924 & 0.9924 & 9e-5 & 7e-5 & 9e-5 & 178.2 & 7.7 & 4.4\\ \cline{4-15}
			& & & 1 & 274 & 14 & 0.9926 & 0.9926 & 0.9926 & 1e-3 & 9e-5 & 1e-4 & {\bf 7200.3} & 12.4 & 4.1\\ \cline{4-15}
			& & & 1e+1 & 473 & 21 & 0.9769 & 0.9926 & 0.9926 & 5e+0 & 9e-5 & 6e-5 & {\bf 7201.0} & 17.7 & 8.3\\ \cline{4-15}
			& & & 1e+2 & 541 & 25 & 0.9710 & 0.9916 & 0.9916 & 1e+1 & 1e-4 & 8e-5 & {\bf 7202.4} & 34.5 & 17.1\\ \cline{2-15}
			
			& \multicolumn{1}{ c| }{\multirow{8}{*}{1e-6}} &
			\multicolumn{1}{ c| }{\multirow{4}{*}{1}} &
			1e-1 & 213 & 14 & 0.9928 & 0.9928 & 0.9928 & 4e-6 & 9e-7 & 1e-6 & {\bf 7200.7} & 42.0 & 4.3\\ \cline{4-15}
			& & & 1 & 414 & 22 & 0.9929 & 0.9929 & 0.9928 & 2e-3 & 1e-6 & 1e-6 & {\bf 7202.9} & 50.3 & 7.7\\ \cline{4-15}
			& & & 1e+1 & 496 & 25 & 0.9764 & 0.9923 & 0.9923 & 6e+0 & 9e-7 & 7e-7 & {\bf 7201.0} & 94.7 & 16.8\\ \cline{4-15}
			& & & 1e+2 & 542 & 26 & 0.9540 & 0.9915 & 0.9915 & 2e+1 & 9e-7 & 8e-7 & {\bf 7202.2} & 101.8 & 48.8\\ \cline{3-15}
			& & \multicolumn{1}{ c| }{\multirow{4}{*}{10}} &
			1e-1 & 91 & 5 & 0.9924 & 0.9924 & 0.9924 & 9e-7 & 9e-7 & 8e-7 & 483.5 & 17.1 & 5.2\\ \cline{4-15}
			& & & 1 & 269 & 14 & 0.9926 & 0.9926 & 0.9926 & 2e-3 & 8e-7 & 8e-7 & {\bf 7202.8} & 28.3 & 5.1\\ \cline{4-15}
			& & & 1e+1 & 470 & 21 & 0.9855 & 0.9926 & 0.9926 & 4e+0 & 1e-6 & 9e-7 & {\bf 7200.2} & 64.5 & 13.3\\ \cline{4-15}
			& & & 1e+2 & 532 & 25 & 0.9657 & 0.9916 & 0.9916 & 1e+1 & 8e-7 & 9e-7 & {\bf 7201.1} & 129.6 & 51.7\\ \hline
	\end{tabular}}
\end{table}
		

\subsection{Computing a solution path of the SMM model for $\{C_i\}^N_{i=1}$}
In this subsection, we assess the numerical performance of the AS strategy on both synthetic and real datasets.
While do numerical experiments, we used the ALM-SNCG as inner solver. It means that
for given parameters $C_i$ and $\tau$, we solved problem (\ref{Model:SMM}) by ALM-SNCG.
The ALM-SNCG with AS strategy is abbreviated as AS+ALM.
We compared the performance of AS+ALM with that of the warm-started ALM-SNCG (abbreviated as Warm+ALM), evaluated at $\eta_{kkt}\leq 10^{-4}$ and $\eta_{kkt}\leq 10^{-6}$, respectively.
In all experiments, we set $d_{\max}=500$ in Algorithm \ref{alg_AS}. Additionally, $\widehat{\varepsilon}$ is set to 0.05 for $n<500,000$ and 0.1 otherwise for random data, and to 0.4 for real data.

Tables \ref{tab_AS_solution_path_random} and  \ref{tab_AS_solution_path MNIST} show the performance of both methods  AS+ALM and Warm+ALM.
Each column in the tables takes the following meaning:
\begin{itemize}
\item $\mbox{Avg}\underline{~}{\mbox{nSM}}$: the average number of the support matrices for the reduced subproblems (\ref{model:AS_subpro});
\item $\mbox{Avg}\underline{~}\mbox{sam}$/$\mbox{Max}\underline{~}\mbox{sam}$: the average/maximum sample size of (\ref{model:AS_subpro});
\item  Worst relkkt: the maximum relative KKT residuals as in (\ref{eq:relkkt});
\item  $\mbox{Avg}\underline{~}{|\J_1|}$/$\mbox{Avg}\underline{~}\mbox{time}$: the average values of $|\J_1|$/computation times;
\item Iteration numbers: the average number of iterations. For AS+ALM, the column shows the average number of AS rounds, followed by the average number of outer augmented Lagrangian iterations, with the average number of inner semismooth Newton-CG iterations in parentheses.
\end{itemize}

\subsubsection{Numerical results on synthetic data}

Table \ref{tab_AS_solution_path_random} displays the numerical results from AS+ALM and Warm+ALM using a given sequence $\{C_i\}^N_{i=1}$ on random data. 
The parameter $C$ ranges from 0.1 to 100, divided into 50 equally spaced grid points on the \(\log_{10}\) scale.
A notable observation from Table \ref{tab_AS_solution_path_random} is that, on average, the AS strategy requires only one iteration per grid point to achieve the desired solution. It suggests that the initial index sets $\I^0(C_i)$  almost fully cover indices of the support matrices at the solutions for each $C_i$.

Table \ref{tab_AS_solution_path_random} also reveals that despite the average number of inner ALM-SNCG iterations in AS+ALM surpasses that of Warm+ALM across all instances, the AS strategy can still accelerate the Warm+ALM by an average factor of 2.69 (under $\eta_{kkt}\leq 10^{-4}$) and 3.07 (under $\eta_{kkt}\leq 10^{-6}$) in terms of time consumption. This is due to the smaller average sample size of AS's reduced subproblems compared to that of the original SMM models.
Furthermore, Figure \ref{fig:AS_vs_Warm_path_Random_Percentage_1e_4_6} depicts that the percentages of support matrices in the reduced subproblems ($\mbox{mean}(|\mbox{SM}|/n_{\scriptscriptstyle\I})$) consistently exceed those in the original problems ($\mbox{mean}(|\mbox{SM}|)/n$). Here, $|\mbox{SM}|$ and $n_{\scriptscriptstyle\I}$ represent the number of support matrices and samples in the reduced subproblem (\ref{model:AS_subpro}) during each AS iteration, respectively.

	\begin{table}[http]
		\caption{Results for generating solution paths using Warm+ALM (Warm) and AS+ALM (AS) with a sequence $\{C_i\}^N_{i=1}$ and $\eta_{kkt}\leq\varepsilon$ on random data}
		\centering
		\label{tab_AS_solution_path_random}
		\resizebox{\linewidth}{!}{
			\begin{tabular}{|c|c|c|c|c|c|c|c|c|c|c|c|c|c|}
				\hline			
				Data & \multicolumn{1}{ c| }{\multirow{2}{*}{$\varepsilon$}} &
				\multicolumn{1}{ c| }{\multirow{2}{*}{$\tau$}} &
				\multicolumn{3}{ c| }{\multirow{1}{*}{Information of the AS}} &
				\multicolumn{2}{ c| }{\multirow{1}{*}{Avg$\underline{~}|\J_1|$}} &
				\multicolumn{2}{ c| }{\multirow{1}{*}{Worst relkkt}} &
				\multicolumn{2}{ c| }{\multirow{1}{*}{Iteration numbers}} &
				\multicolumn{2}{ c| }{\multirow{1}{*}{Avg$\underline{~}$time (seconds)}} \\ \cline{4-14}
				$(n, p, q)$ & & & Avg$\underline{~}$nSM & Avg$\underline{~}$sam & Max$\underline{~}$sam & Warm & AS & Warm & AS & Warm & AS & Warm & AS \\ \hline
				\multicolumn{1}{ |c| }{\multirow{4}{*}{(1e4, 1e2, 1e2)}} & \multicolumn{1}{ c| }{\multirow{2}{*}{1e-4}}  & 10 & 1374 & 1532 & 4152 & 28 & 25 & 1e-4 & 5e-5 & (2, 20) & 1(3, 25) & 1.50 & 0.49\\ \cline{3-14}
				& & 100 & 2542 & 2825 & 8739 & 27 & 25 & 1e-4 & 1e-4 & (3, 20) & 1(4, 24) & 1.50 & 0.65 \\ \cline{2-14}
				& \multicolumn{1}{ c| }{\multirow{2}{*}{1e-6}} & 10 & 1375 & 1532 & 4153 & 21 & 21 & 1e-6 & 6e-7 & (11, 46) & 1(13, 50) & 3.93 & 1.20\\  \cline{3-14}
				& & 100 & 2542 & 2825 & 8736 & 20 & 19 & 1e-6 & 1e-6 & (12, 47) & 1(14, 50) & 3.80 & 1.55\\ \hline
				
				\multicolumn{1}{ |c| }{\multirow{4}{*}{(1e4, 1e3, 5e2)}} & \multicolumn{1}{ c| }{\multirow{2}{*}{1e-4}}  & 10 & 449 & 507 & 1360 & 24 & 22 & 1e-4 & 3e-5 & (5, 27) & 1(10, 39) & 98.37 & 40.39\\ \cline{3-14}
				& & 100 & 962 & 1086 & 3548 & 25 & 23 & 1e-4 & 1e-4 & (7, 31) & 1(11, 40) & 107.76 & 37.84\\ \cline{2-14}
				& \multicolumn{1}{ c| }{\multirow{2}{*}{1e-6}} & 10 & 449 & 507 & 1360 & 21 & 21 & 1e-6 & 4e-7 & (15, 57) & 1(21, 66) & 211.74 & 69.51\\ \cline{3-14}
				& & 100 & 962 & 1086 & 3550 & 21 & 21 & 1e-6 & 1e-6 & (20, 71) & 1(24, 73) & 244.64 & 64.56\\ \hline
				
				\multicolumn{1}{ |c| }{\multirow{4}{*}{(1e5, 5e1, 1e2)}} & \multicolumn{1}{ c| }{\multirow{2}{*}{1e-4}}  & 10 & 17750 & 19285 & 44278 & 105 & 76 & 1e-4 & 4e-5 & (2, 38) & 1(3, 39) & 12.50 & 2.71 \\ \cline{3-14}
				& & 100 & 22697 & 24878 & 67188 & 120 & 84 & 1e-4 & 4e-5 & (2, 10) & 1(3, 14) & 3.47 & 1.48\\ \cline{2-14}
				& \multicolumn{1}{ c| }{\multirow{2}{*}{1e-6}} & 10 & 17753 & 19287 & 44268 & 32 & 30 & 1e-6 & 6e-7 & (14, 70) & 1(17, 85) & 23.29 & 6.35\\ \cline{3-14}
				& & 100 & 22700 & 24879 & 67180 & 27 & 24 & 1e-6 & 8e-7 & (15, 51) & 1(19, 60) & 16.93 & 5.21\\ \hline
				
				\multicolumn{1}{ |c| }{\multirow{4}{*}{(1e6, 5e1, 1e2)}}  & \multicolumn{1}{ c| }{\multirow{2}{*}{1e-4}} & 10 & 276208 & 303668 & 485124 & 1363 & 984 & 1e-4 & 5e-5 & (2, 24) & 1(3, 28) & 76.63 & 34.20\\ \cline{3-14}
				& & 100 & 291663 & 321628 & 565727 & 1254 & 860 & 1e-4 & 5e-5 & (2, 15) & 1(3, 18) & 57.21 & 34.15\\ \cline{2-14}
				& \multicolumn{1}{ c| }{\multirow{2}{*}{1e-6}} & 10 & 276231 & 303681 & 485006 & 413 & 395 & 1e-6 & 6e-7 & (14, 67) & 1(17, 76) & 209.41 & 78.54
				\\ \cline{3-14}
				& & 100 & 291677 & 321643 & 565653 & 234 & 214 & 1e-6 & 7e-7 & (15, 60) & 1(18, 67) & 197.75 & 82.87\\ \hline
			\end{tabular}
		}
	\end{table}

	
	\begin{figure}[http]
		\centering
		\includegraphics[width=1.0\textwidth]{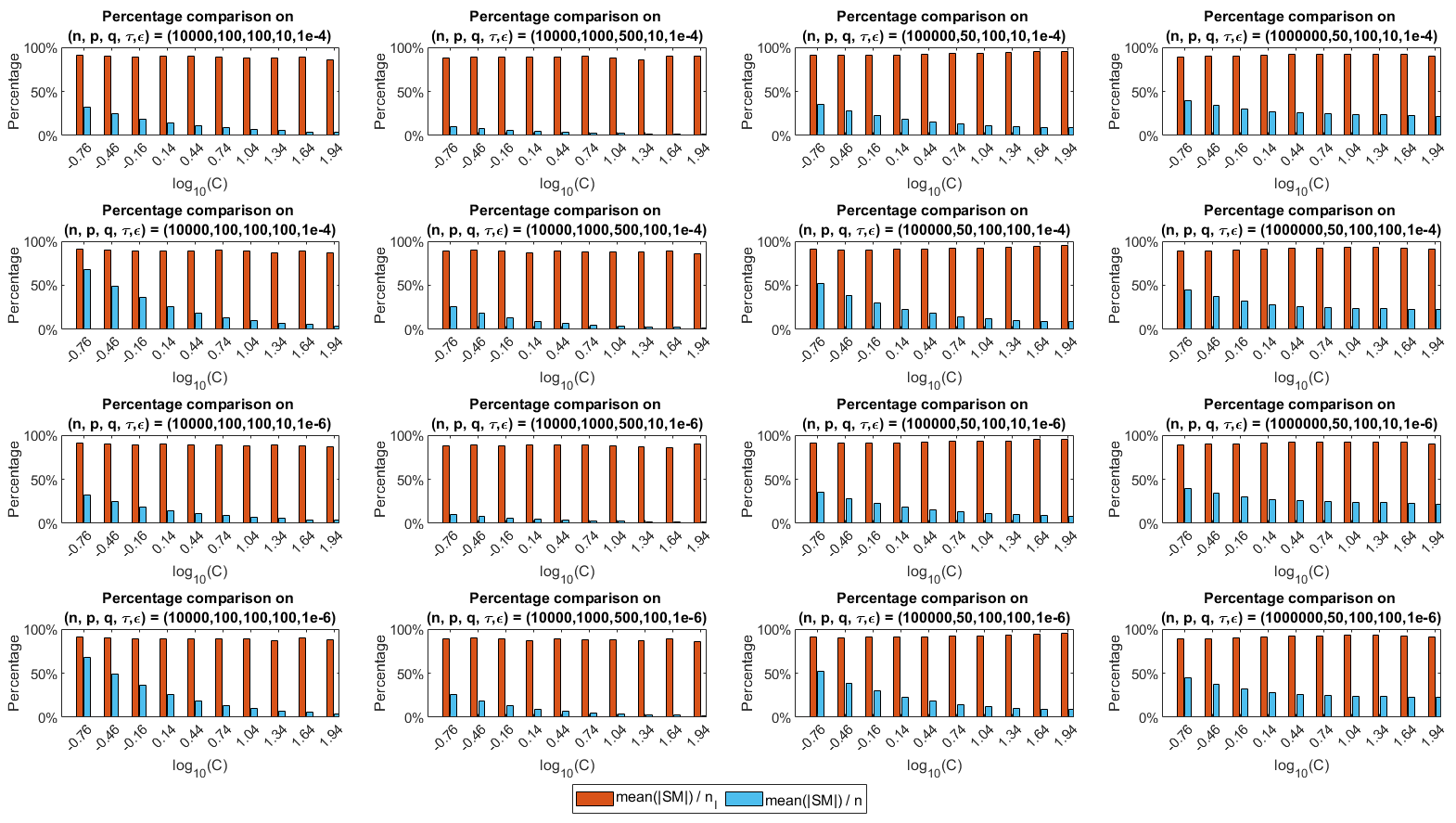}
		\caption{Comparison of support matrices percentage between reduced subproblems ($\mbox{mean}(|\mbox{SM}|/n_{\scriptscriptstyle\I}$)) and original problems ($\mbox{mean}(|\mbox{SM}|)/n$) on random data under $\eta_{kkt}\leq \varepsilon$}
		\label{fig:AS_vs_Warm_path_Random_Percentage_1e_4_6}
	\end{figure}

	\subsubsection{Numerical results on real data}
	Lastly, we compare AS+ALM and Warm+ALM in generating the entire solution path for SMM models (\ref{Model:SMM_eq2}) on large-scale CIFAR-10 and MNIST training datasets. 
	We vary $C$ from $10^{-3}$ to 1 for the CIFAR-10 and from $10^{-1}$ to $10^2$ for the MNIST, using 50 equally spaced grid points on a $\log_{10}$ scale.
	Similarly to the previous subsection, Table \ref{tab_AS_solution_path MNIST} and Figure \ref{fig:AS_vs_Warm_path_real_Percentage_1e_4_6} demonstrate that the AS strategy reduces the overall sample size of SMM models, thereby enhancing computational efficiency compared to the warm-starting strategy. Particularly, on the MNIST dataset, AS+ALM achieves average speedup factors of 1.50 and 1.92 over Warm+ALM for $\eta_{kkt}\leq 10^{-4}$ and $\eta_{kkt}\leq 10^{-6}$, respectively.

	\begin{table}
		\caption{Results for generating solution paths using Warm+ALM (Warm) and AS+ALM (AS) with a sequence $\{C_i\}^N_{i=1}$ and $\eta_{kkt}\leq\varepsilon$ on real data}
		\centering
		\label{tab_AS_solution_path MNIST}
		\resizebox{\linewidth}{!}{
			\begin{tabular}{|c|c|c|c|c|c|c|c|c|c|c|c|c|c|}
				\hline
				{Prob} & \multicolumn{1}{ c| }{\multirow{2}{*}{$\varepsilon$}} &
				\multicolumn{1}{ c| }{\multirow{2}{*}{$\tau$}} &
				\multicolumn{3}{ c| }{\multirow{1}{*}{information of the AS}} &
				\multicolumn{2}{ c| }{\multirow{1}{*}{Avg$\underline{~}|\J_1|$}} &
				\multicolumn{2}{ c| }{\multirow{1}{*}{Worst relkkt}} &
				\multicolumn{2}{ c| }{\multirow{1}{*}{Iteration numbers}} &
				\multicolumn{2}{ c| }{\multirow{1}{*}{Avg$\underline{~}$time (seconds)}} \\ \cline{4-14}
				$(n, p, q)$ & & & Avg$\underline{~}$nSM & Avg$\underline{~}$sam & Max$\underline{~}$sam & Warm & AS & Warm & AS & Warm & AS & Warm & AS \\ \hline
				\multicolumn{1}{ |c| }{\multirow{4}{*}{$\begin{array}{c} \mbox{CIFAR-10}\\ \mbox{(1e4, 32, 32)} \end{array}$}}  & \multicolumn{1}{ c| }{\multirow{2}{*}{1e-4}} & 1 & 4302 & 5709 & 7045 & 242 & 242 & 1e-4 & 1e-4 & (11, 52) & 1(12, 52) & 0.78 & 0.59\\ \cline{3-14}
				&  & 10 & 5073 & 6580 & 9661 & 86 & 86 & 1e-4 & 1e-4 & (15, 110) & 1(15, 109) & 1.41 & 1.05	\\ \cline{2-14}
				& \multicolumn{1}{ c| }{\multirow{2}{*}{1e-6}} & 1 & 4300 & 5709 & 7045 & 240 & 240 & 1e-6 & 1e-6 & (22, 72) & 1(22, 71) & 1.20 & 0.87\\ \cline{3-14}
				&  & 10 & 5073 & 6580 & 9661 & 85 & 85 & 1e-6 & 1e-6 & (26, 138) & 1(26, 138) & 1.73 & 1.30\\ \hline
				
				\multicolumn{1}{ |c| }{\multirow{4}{*}{$\begin{array}{c} \mbox{MNIST}\\ \mbox{(6e4, 28, 28)} \end{array}$}} & \multicolumn{1}{ c| }{\multirow{2}{*}{1e-4}} & 1 & 1063 & 1924 & 2403 & 443 & 442 & 1e-4 & 1e-4 & (5, 73) & 1(5, 75) & 4.16 & 2.77\\ \cline{3-14}
				&  & 10 & 1188 & 2130 & 3316 & 352 & 352 & 1e-4 & 1e-4 & (9, 96) & 1(9, 98) & 5.14 & 3.41\\ \cline{2-14}
				& \multicolumn{1}{ c| }{\multirow{2}{*}{1e-6}} & 1 & 1064 & 1924 & 2404 & 441 & 440 & 1e-6 & 1e-6 & (12, 221) & 1(13, 181) & 10.37 & 5.10 \\ \cline{3-14}
				&  & 10 & 1188 & 2130 & 3318 & 350 & 350 & 1e-6 & 1e-6 & (16, 157) & 1(17, 155) & 7.36 & 4.07\\ \hline
		\end{tabular}}
	\end{table}


	\begin{figure}[http]
		\centering
		\includegraphics[width=0.95\textwidth]{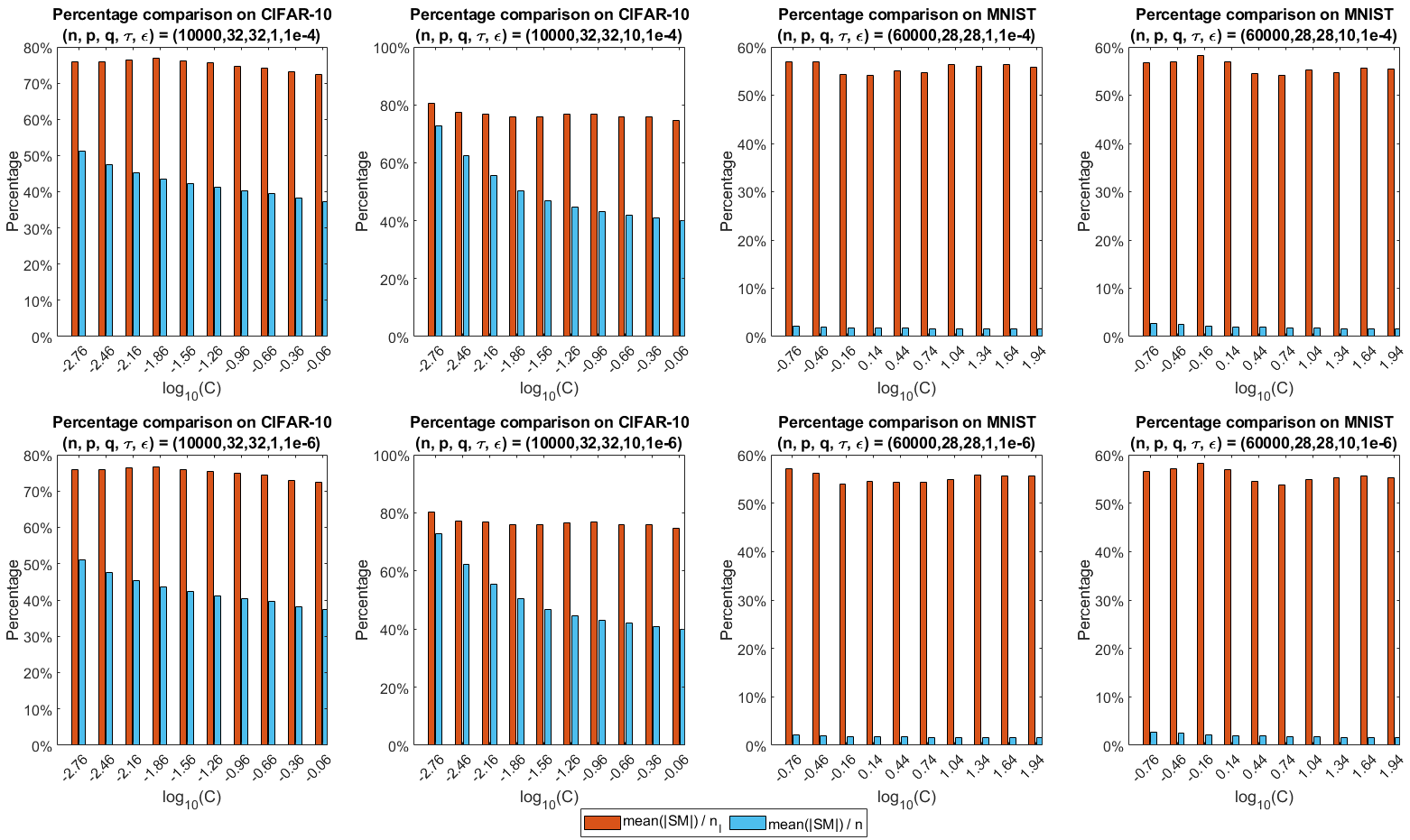}
		\caption{Comparison of support matrices percentage between reduced subproblems ($\mbox{mean}(|\mbox{SM}|/n_{\scriptscriptstyle\I})$) and original problems ($\mbox{mean}(|\mbox{SM}|)/n$) on real data under $\eta_{kkt}\leq \varepsilon$}
		\label{fig:AS_vs_Warm_path_real_Percentage_1e_4_6}
	\end{figure}

	\section{Conclusion}
	
	In this paper, we have proposed a semismooth Newton-CG based augmented Lagrangian method for solving the large-scale support matrix machine (SMM) model. Our algorithm effectively leverages the sparse and low-rank structures inherent in the second-order information associated with the SMM model. Furthermore, we have developed an adaptive sieving (AS) strategy aimed at iteratively guessing and adjusting active samples, facilitating the rapid generation of solution paths across a fine grid of parameters $C$ for the SMM models. Numerical results have convincingly demonstrated the superior efficiency and robustness of our algorithm and strategy in solving large-scale SMM models. Nevertheless, the current AS strategy focuses on reducing the sample size of subproblems. The further research direction is to explore methods for simultaneously reducing both sample size and the dimensionality of feature matrices. This may be achieved through the development of effective combinations of the active subspace selection \cite{hsieh2014nuclear,feng2022subspace} and the AS strategy.

%

\appendix
\newpage
\noindent{\bf\LARGE Appendices}
\section{Proof of Proposition \ref{Prop:sufficient_condition_quad_growth_cond}}\label{Appendix:Prof_Prop_sufficient_condition}
\begin{proof}
To proceed, we characterize the set $\Omega_D$.
It is not difficult to show that the KKT system (\ref{eq:KKT_system_SMM}) is equivalent to the following system:
\begin{equation}\label{eq:KKT_system_blamLAm}
	\left\{
	\begin{array}{l}
		0=y^{\top}\lambda,\\
		0\in by-\A(\A^*\lambda+\Lambda)-e_n+\partial\delta_S(-\lambda), \qquad \qquad (b,\lambda,\Lambda)\in\mathbb{R}\times\mathcal{Y}. \tag{A.1}\\
		0\in\A^*\lambda+\Lambda+\partial\delta_{\mathbb{B}^{\tau}_2}(\Lambda),\\
	\end{array}
	\right.
\end{equation}
Indeed, if $(\overline{W},\overline{b},\overline{v},\overline{U},\overline{\lambda},\overline{\Lambda})\in\mathcal{X}\times\mathcal{Y}$ satisfies  (\ref{eq:KKT_system_SMM}), then $(\overline{b},\overline{\lambda},\overline{\Lambda})$ solves (\ref{eq:KKT_system_blamLAm}).
Conversely, if $(\overline{b},\overline{\lambda},\overline{\Lambda})\in\mathbb{R}\times\mathcal{Y}$ solves (\ref{eq:KKT_system_blamLAm}), then
$(\overline{W},\overline{b},\overline{v},\overline{U},\overline{\lambda},\overline{\Lambda})$ with $\overline{W}=-\A^*\overline{\lambda}-\overline{\Lambda}$, $\overline{U}=\overline{W}$, and $\overline{v}=e_n-\A\overline{W}-\overline{b}y$
satisfies the KKT system (\ref{eq:KKT_system_SMM}).
For any pair $\overline{z}:=(\overline{\lambda},\overline{\Lambda})\in\Omega_D$,  $\overline{\Upsilon}=\A^*\overline{\lambda}+\overline{\Lambda}$ is an invariant (see, e.g., \cite{mangasarian1988simple}). We then define $\overline{\eta}:=\A(\overline{\Upsilon})+e_n$.
Based on the arguments preceding Subsection \ref{subsec:sample_sparsity} and the equivalence between (\ref{eq:KKT_system_SMM}) and (\ref{eq:KKT_system_blamLAm}), we claim that for any $\overline{b}\in\M_{D}(\overline{z})$, the set $\Omega_D$ can be expressed as
\begin{equation}\label{eq:Omega_D2}
	\Omega_{D}=\V_{D}\cap\G^1_{D}\cap\G^2_{D}(\overline{b}), \tag{A.2}
\end{equation}
where
\begin{align*}
	\M_{D}(\overline{z}):=\{b\in\mathbb{R}\ |\ (b, \overline{\lambda}, \overline{\Lambda})\ \mbox{satisfies}\ (\ref{eq:KKT_system_blamLAm})\},\ \G^1_{D}:=\left\{(\lambda,\Lambda)\in\mathcal{Y}\ |\ y^{\top}\lambda=0\right\},\\
	\G^2_{D}(\overline{b}):=\left(-\partial\delta^*_S(\overline{\eta}-\overline{b}y)\right)\times\partial \tau\|-\overline{\Upsilon}\|_*,\
	\V_{D}:=\left\{(\lambda,\Lambda)\in\mathcal{Y}\ |\ \A^*\lambda+\Lambda=\overline{\Upsilon}\right\}.
\end{align*}

Building on the above preparation, we proceed to establish the conclusion of Proposition \ref{Prop:sufficient_condition_quad_growth_cond}.
	We first show that for any $((-\overline{\lambda},\overline{\Lambda}),(\overline{\xi},\overline{\Xi}))\in\mbox{gph}\,\partial\delta_{S\times\mathbb{B}^{\tau}_2}$, $\partial\delta_{S\times\mathbb{B}^{\tau}_2}$ is metrically subregular at $(-\overline{\lambda},\overline{\Lambda})$ for $(\overline{\xi},\overline{\Xi})$, i.e., there exist a constant $\kappa>0$ and a neighborhood $\U\subseteq\Y$ of $(-\overline{\lambda},\overline{\Lambda})$ such that
	$$
	\mbox{dist}\left( (\lambda,\Lambda), (\partial\delta_{S\times\mathbb{B}^\tau_2})^{-1}(\overline{\xi},\overline{\Xi}) \right)\leq\kappa\mbox{dist}\left( (\overline{\xi},\overline{\Xi}), \partial\delta_{S\times\mathbb{B}^{\tau}_2}(\lambda, \Lambda) \right), \quad \forall\, (\lambda,\Lambda)\in\U.
	$$
	Here, $\mbox{dist}(x, D):=\min_{d\in D}\|d-x\|$ denotes the distance from $x$ to a closed convex set $D$ in a finite dimensional real Euclidean space $\Z$, and $\mbox{gph}\,F:=\{(x,y)\in\Z\times\Z\ |\ y\in F(x)\}$ represents the graph of a multi-valued mapping $F$ from $\Z$ to $\Z$.
	In fact,
	it follows from the piecewise linearity of $\delta^*_S(\cdot)$ on $\mathbb{R}^n$ and \cite[Proposition 12.30]{rockafellar2009variational} that $\partial\delta^*_S(\cdot)$ is piecewise polyhedral. Then, for any $(\overline{\xi},-\overline{\lambda})\in\mbox{gph}\,\partial\delta^*_S$, we obtain from \cite[Proposition 1]{robinson1981some} that $\partial\delta^*_S(\cdot)$ is locally upper Lipschitz continuous at $\overline{\xi}$, which further implies the metric subregularity of $\partial\delta_S$ at $-\overline{\lambda}$ for $\overline{\xi}$.
	Let $\mathbb{B}^{\tau}_1\subset\mathbb{R}^p$ be the $\ell_1$-norm ball centered at 0 with radius $\tau$.
	Similarly, for any $(v,\hat{v})\in\mbox{gph}\,\partial\delta_{\mathbb{B}^{\tau}_1}$, $\partial\delta_{\mathbb{B}^{\tau}_1}$ is metrically subregular at $v$ for $\hat{v}$ due to the piecewise linearity of $\tau\|\cdot\|_{\infty}$ on $\mathbb{R}^p$.
	For any given $(\overline{\Lambda},\overline{\Xi})\in\mbox{gph}\,\partial\delta_{\mathbb{B}^{\tau}_2}$, it follows from  \cite[Proposition 3.8]{cui2017quadratic} that $\partial\delta_{\mathbb{B}^{\tau}_2}$ is also metrically subregular at $\overline{\Lambda}$ for $\overline{\Xi}$. Thus, there exist positive constants $\kappa_1$ and $\kappa_2$ and a neighborhood $\U$ of $(-\overline{\lambda},\overline{\Lambda})$ such that for any $(\lambda,\Lambda)\in\U$,
	$$
	\begin{aligned}
		\mbox{dist}\left( (\lambda,\Lambda), (\partial\delta_{S\times\mathbb{B}^{\tau}_2}){}^{-1}(\overline{\xi},\overline{\Xi})\right)
		&\leq \mbox{dist}\left( \lambda, (\partial\delta_S)^{-1}(\overline{\xi}) \right)+\mbox{dist}\left(\Lambda, (\partial\delta_{\mathbb{B}^{\tau}_2})^{-1}(\overline{\Xi})\right)\\
		&\leq\kappa_1\mbox{dist}\left(\overline{\xi},\partial\delta_S(\lambda)\right)+\kappa_2\mbox{dist}\left(\overline{\Xi}, \partial\delta_{\mathbb{B}^\tau_2}(\Lambda)\right)\\
		&\leq\max\{\kappa_1,\kappa_2\}\left( \mbox{dist}(\overline{\xi}, \partial\delta_S(\lambda))+\dist(\overline{\Xi}, \partial\delta_{\mathbb{B}^{\tau}_2}(\Lambda)) \right)\\
		&\leq\sqrt{2}\max\{\kappa_1,\kappa_2\}\mbox{dist}\left( (\overline{\xi},\overline{\Xi}),\partial\delta_{S\times\mathbb{B}^{\tau}_2}(\lambda,\Lambda) \right),
	\end{aligned}
	$$
    where the second inequality follows from the metric subregularity of $\partial\delta_S$ at $-\overline{\lambda}$ for $\overline{\xi}$ and the metric subregularity of
	$\partial\delta_{\mathbb{B}^{\tau}_2}$ at $\overline{\Lambda}$ for $\overline{\Xi}$.

	Next,  for any $\overline{b}\in\M_{D}(\overline{z})$, we prove that the collection of sets $\{\V_{D},\G^1_{D},\G^2_{D}(\overline{b})\}$ is boundedly linearly regular, i.e., for every bounded set $B\subset\mathcal{Y}$, there exists a constant $\kappa'>0$ such that for any $(\lambda, \Lambda)\in B$,
	$$
	\mbox{dist}\left((\lambda, \Lambda), \V_{D}\cap\G^1_{D}\cap\G^2_{D}(\overline{b})\right)\leq\kappa'\max\left\{\mbox{dist}((\lambda,\Lambda), \V_{D}), \mbox{dist}((\lambda,\Lambda), \G^1_{D}), \mbox{dist}((\lambda,\Lambda), \G^2_{D}(\overline{b}))\right\}.
	$$
	In fact, based on the arguments preceding Subsection \ref{subsec:sample_sparsity} and (\ref{eq:Omega_D2}), the intersection $\V_{D}\cap\G^1_{D}\cap\G^2_{D}(\overline{b})$ is nonempty.
	Define $\G^{21}_{D}(\overline{b}):=\left(-\partial\delta^*_{S}(\overline{\eta}-\overline{b}y)\right)\times\mathbb{R}^{p\times q}$ and $\G^{22}_{D}:=\mathbb{R}^n\times\partial \tau\|-\overline{\Upsilon}\|_*$. It follows from the equality (\ref{eq:Omega_D2}) that
	\begin{equation}\label{eq:Omega_D2_4}
		\Omega_{D}=\V_{D}\cap\G^1_{D}\cap\G^{21}_{D}(\overline{b})\cap\G^{22}_{D}.\tag{A.3}
	\end{equation}
	It can be checked that $\{\V_{D},\G^1_{D},\G^2_{D}(\overline{b})\}$ is boundedly linearly regular if and only if $\{\V_{D},\G^1_{D}$, $\G^{21}_{D}(\overline{b}),\G^{22}_{D}\}$ is boundedly linearly regular.
	Furthermore, from \cite[Corollary 19.2.1 and Theorem 23.10]{rockafellar1970convex},  $\partial\delta^*_S(\overline{\eta}-\overline{b}y)$ is a polyhedral convex set, implying that  $\G^{21}_D(\overline{b})$ is also polyhedral convex.
	Since $\V_{D}$ and $\G^1_{D}$ are polyhedral convex, based on \cite[Corollary 3]{bauschke1999strong} and (\ref{eq:Omega_D2_4}), we only need to show that there exists $(\widetilde{\lambda},\widetilde{\Lambda})\in\Omega_{D}$ such that $(\widetilde{\Lambda},\widetilde{\Lambda})\in\mbox{ri}\,(\G^{22}_{D})$, i.e., $\widetilde{\Lambda}\in\mbox{ri}\left( \partial\tau\|-\overline{\Upsilon}\|_* \right)$.
	Let $\mbox{rank}(\widetilde{\Lambda})=\overline{r}$.
	It follows from \cite[Example 2]{watson1992characterization} that $\|\widetilde{\Lambda}\|_2\leq\tau$.
	Suppose the SVD of $\widetilde{\Lambda}$ in (\ref{eq:svd_Lambda_simple}) has the following form:
	\begin{equation}\label{eq:SVD_Lambda}
		\widetilde{\Lambda}=\overline{U}\left[ \overline{\Sigma}(\widetilde{\Lambda})\ \, 0 \right]\overline{V}^{\top}=[\overline{U}_{1}\ \,\overline{U}_{(0,1)}\ \, \overline{U}_0]\left[
		\begin{array}{ccc}
			\tau\I_{\overline{s}} & 0 & 0\\[2mm]
			0 & \tau\overline{\Sigma}_{(0,1)}(\widetilde{\Lambda})  & 0\\
			0 & 0 & 0
		\end{array}
		\right]\left[
		\begin{array}{l}
			\overline{V}^{\top}_1\\[1mm]
			\overline{V}^{\top}_{(0,1)}\\[1mm]
			\overline{V}^{\top}_{0}
		\end{array}
		\right], \tag{A.4}
	\end{equation}
	where $\overline{\Sigma}(\widetilde{\Lambda})=\mbox{Diag}\left( \nu_1(\widetilde{\Lambda}),\ldots,\nu_p(\widetilde{\Lambda}) \right)\in\mathcal{S}^p_{+}$ and $\overline{\Sigma}_{(0,1)}(\widetilde{\Lambda})=\mbox{Diag}(\nu_{\,\overline{s}+1}(\widetilde{\Lambda})$, $\ldots,\nu_{\,\overline{r}}(\widetilde{\Lambda}))\in\mathcal{S}^{\,\overline{r}-\overline{s}}_{++}$ with singular values
	$1=\nu_1(\widetilde{\Lambda})=\ldots=\nu_{\,\overline{s}}(\widetilde{\Lambda})>\nu_{\,\overline{s}+1}(\widetilde{\Lambda})\geq\ldots\geq\nu_{\,\overline{r}}(\widetilde{\Lambda})>\nu_{\,\overline{r}+1}(\widetilde{\Lambda})=\ldots=\nu_p(\widetilde{\Lambda})=0$, and $\overline{U}:=\left[ \overline{U}_{1}\ \overline{U}_{(0,1)}\ \overline{U}_0 \right]\in\mathcal{O}^p$ and $\overline{V}:=\left[ \overline{V}_1\ \overline{V}_{(0,1)}\ \overline{V}_0 \right]\in\mathcal{O}^q$ whose columns form a compatible set of orthonormal left and right singular vectors of $\widetilde{\Lambda}$ with $\overline{U}_1\in\mathbb{R}^{p\times\overline{s}}$,
	$\overline{U}_{(0,1)}\in\mathbb{R}^{p\times(\overline{r}-\overline{s})}$,
	$\overline{U}_0\in\mathbb{R}^{p\times(p-\overline{r})}$, $\overline{V}_1\in\mathbb{R}^{q\times\overline{s}}$,
	$\overline{V}_{(0,1)}\in\mathbb{R}^{q\times(\overline{r}-\overline{s})}$, and $\overline{V}_0\in\mathbb{R}^{q\times(q-\overline{r})}$.
	Based on \cite[Example 2]{watson1992characterization} and (\ref{eq:SVD_Lambda}), one has that $\widetilde{\Lambda}\in\mbox{ri}(\partial\tau\|-\overline{\Upsilon}\|_*)$ if and only if $\mbox{rank}(-\overline{\Upsilon})=\overline{s}$.
	One the other hand, from the definition of the matrix $\overline{I}$ in (\ref{eq:sufficient_condition_quatratic_growth}), we obtain that
	\begin{equation}\label{eq:tauI_tildeLambda}
		\tau\overline{I}-\widetilde{\Lambda}=\left[ \overline{U}_1\ \, \overline{U}_{(0,1)}\ \, \overline{U}_0\right]\left[
		\begin{array}{cccc}
			0 & 0 & 0 & 0\\[2mm]
			0 & \tau(I_{\overline{r}-\overline{s}}-\overline{\Sigma}_{(0,1)}(\widetilde{\Lambda})) & 0 & 0\\[2mm]
			0 & 0 & I_{p-\overline{r}} & 0
		\end{array}
		\right]\left[
		\begin{array}{l}
			\overline{V}^{\top}_1\\[1mm]
			\overline{V}^{\top}_{(0,1)}\\[1mm]
			\overline{V}^{\top}_0
		\end{array}
		\right],\tag{A.5}
	\end{equation}
	which implies that $\mbox{rank}(\tau\overline{I}-\widetilde{\Lambda})=p-\overline{s}$.
	It means that $\mbox{rank}(-\overline{\Upsilon})=\overline{s}$ if and only if $p=\mbox{rank}(-\overline{\Upsilon})+\mbox{rank}(\tau\overline{I}-\widetilde{\Lambda})$. Therefore, if there exists $(\widetilde{\lambda},\widetilde{\Lambda})\in\Omega_D$ such that $p=\mbox{rank}(-\overline{\Upsilon})+\mbox{rank}(\tau\overline{I}-\widetilde{\Lambda})$, then $\{\V_{D},\G^1_{D},\G^2_{D}(\overline{b})\}$ is boundedly linearly regular.
	
	Based on all the above analysis, the desired conclusion can be established utilizing \cite[Theorem 3.1]{cui2016asymptotic} and \cite[Theorem 3.3]{artacho2008characterization}. \qed
\end{proof}

\section{Proof of Theorem \ref{Thm:constraint_nondegenerate}}\label{Appendix:Prof_constraint_nondegenrate}
\begin{proof}
	Building upon a technique similar to \cite[Proposition 3.1]{li2018qsdpnal}, we obtain the following equivalent relation:
	\begin{equation}\label{eq:equivalence_yTMy}
		\V\ \mbox{is self-adjoint and positive definite in }\ \mathbb{R}^{p\times q}\times\mathbb{R} \quad \Longleftrightarrow\quad y^{\top}My>0, \tag{B.1}
	\end{equation}
	where $\V$ is any element in $\widehat{\partial}^2\varphi_k(\widehat{W},\widehat{b})$ defined in (\ref{eq:Hession_phi}) with $M\in\partial\Pi_S(\omega_k(\widehat{W}, \widehat{b}))$. Here, for any $\omega\in\mathbb{R}^n$, the Clarke generalized Jacobian $\partial\Pi_S(\omega)$ can be expressed as
	\begin{equation}\label{eq:Jacobian_M}
		\partial\Pi_S(\omega)=\left\{\mbox{Diag}(v)\ \left|\
		\begin{array}{l}
		 v_j=1,\ \mbox{if}\ \omega_j\in(0,C);\\
		 v_j\in[0,1],\ \mbox{if}\ \omega_j=0\ \mbox{or}\ \omega_j=C;\\
		 v_j=0,\ \mbox{if}\ \omega_j<0\ \mbox{or}\ \omega_j>C
		 \end{array}\right.
		 \right\}.	\tag{B.2}
	\end{equation}
	
	We consider the explicit expression of $\mbox{lin}(T_S(-\widehat{\lambda}))$. Denote the following two index sets
	$$
	\J_2(-\widehat{\lambda}):=\{j\in[n]\ |\ \widehat{\lambda}_j=0\} \quad \mbox{and}\quad \J_3(-\widehat{\lambda}):=\{j\in[n]\ |\ -\widehat{\lambda}_j=C\}.
	$$
	Then we can deduce from \cite[Theorem 6.9]{rockafellar2009variational} that
	$$
	T_S(-\widehat{\lambda})=\{d\in\mathbb{R}^n\ |\ d_{\tiny\J_2(-\widehat{\lambda})}\geq0,\ d_{\J_3(-\widehat{\lambda})}\leq0\}.
	$$
	It follows that
	$$
	\mbox{lin}(T_S(-\widehat{\lambda}))=\{d\in\mathbb{R}^n\ |\ d_{\J_2(-\widehat{\lambda})}=0,\ d_{\J_3(-\widehat{\lambda})}=0\}.
	$$
	
	``$(i)\Leftrightarrow(ii)$". The equality (\ref{eq:constraint_nondegenerate}) at $\widehat{\lambda}$ holds if and only if
	$
	\mathbb{R}=y^{\top}_{\J_1(-\widehat{\lambda})}\mathbb{R}^{|\J_1(-\widehat{\lambda})|},
	$
	i.e., $\J_1(-\widehat{\lambda})\neq\emptyset$.
	
	``$(ii)\Rightarrow(iii)$". If $\J_1(-\widehat{\lambda})\neq\emptyset$, we obtain
	from $\widehat{\lambda}=-\Pi_S(\omega_k(\widehat{W},\widehat{b}))$ that there exists an index $j^0\in[n]$ such that $0<(\omega_k(\widehat{W},\widehat{b}))_{j^0}<C$. It follows from (\ref{eq:Jacobian_M}) that $y^{\top}My>0$. By the equivalent relation (\ref{eq:equivalence_yTMy}), we prove condition $(iii)$.
	
	``$(iii)\Rightarrow(ii)$". If the condition $(iii)$ holds, then by the equivalence in (\ref{eq:equivalence_yTMy}), $y^{\top}My>0$ for any $M\in\partial\Pi_S(\omega_k(\widehat{W},\widehat{b}))$. It means from (\ref{eq:Jacobian_M}) that there exists $j^0\in[n]$ such that $0<(\omega_k(\widehat{W},\widehat{b}))_{j^0}<C$. From $\widehat{\lambda}=-\Pi_S(\omega_k(\widehat{W},\widehat{b}))$, it follows that $0<(-\widehat{\lambda})_{j^0}<C$, implying condition $(ii)$. \qed
\end{proof}

\section{Proof of Theorem \ref{Thm:AS_convergence}}\label{Appendix:Theorem_AS}
\begin{proof}
	For each given $C_i$, the cardinality of $\overline{\I}^{\,k}(C_i)$ will decrease as that of $\I^k(C_i)$ increases. According to the finiteness of samples size $n$, the index set $\J^k(C_i)$ as a subset of $\overline{\I}^k(C_i)$ will be empty after a finite number of iterations, ensuring finite termination.
	
	Next, we show that the output solution $(\overline{W}(C_i), \overline{b}(C_i), \overline{v}(C_i), \overline{U}(C_i), \overline{\lambda}(C_i), \overline{\Lambda}(C_i))$ of Algorithm \ref{alg_AS} is a KKT tuple of the problem (\ref{eq:AS_whole_model}). Indeed, by the finite termination of Algorithm \ref{alg_AS}, we know that there exists an integer $\bar{k}\in[n]$ such that $(\overline{W}(C_i), \overline{b}(C_i), {v}^{\,\bar{k}}_{\scriptscriptstyle\I}(C_i), \overline{U}(C_i), {\lambda}^{\,\bar{k}}_{\scriptscriptstyle\I}(C_i)$, $\overline{\Lambda}(C_i))$ is a KKT solution of the reduced subproblem (\ref{model:AS_subpro}) with the error $(\delta_W, \delta_b, \delta_{v_{\scriptscriptstyle\I}}, \delta_{U}, \delta_{\lambda_{\scriptscriptstyle\I}}$, $\delta_{\Lambda})$ satisfying the condition (\ref{eq:error_AS}) and $\J^{\,\bar{k}}(C_i)=\emptyset$. And $\overline{v}(C_i)$ and $\overline{\lambda}(C_i)$ are obtained by expanding ${v}^{\,\bar{k}}_{\scriptscriptstyle\I}(C_i)$ and ${\lambda}^{\,\bar{k}}_{\scriptscriptstyle\I}(C_i)$ to the $n$-dimensional vectors with the rest entries being $1-y_j(\langle \overline{W}(C_i),X_j \rangle+\overline{b}(C_i))$ and $0$ for any $j\in\overline{\I}^{\,\bar{k}}(C_i)$, respectively.
	
    For the sake of convenience in further analysis, we will exclude $C_i$ from the above KKT solutions and index sets. Additionally, we will eliminate $\bar{k}$ from $\I^{\,\bar{k}}$, $\overline{\I}^{\,\bar{k}}$, ${\lambda}^{\,\bar{k}}_{\scriptscriptstyle\I}$, and ${v}^{\,\bar{k}}_{\scriptscriptstyle\I}$.
	By the KKT conditions of the reduced subproblem (\ref{model:AS_subpro}) and the condition (\ref{eq:error_AS}), one has that
	\begin{equation}\label{eq:KKT_AS_reduced}
		\begin{array}{c}
			\delta_W=\overline{W}+\A^*_{\scriptscriptstyle\I}{\lambda}_{\scriptscriptstyle\I}+\overline{\Lambda},\ \delta_b=y_{\scriptscriptstyle\I}^\top{\lambda}_{\scriptscriptstyle\I},\ \delta_{\Lambda}=\overline{W}-\overline{U},\ \delta_{v_{\scriptscriptstyle\I}}-{\lambda}_{\scriptscriptstyle\I}\in\partial\delta^*_{S^{\,i}_{\scriptscriptstyle\I}}({v}_{\scriptscriptstyle\I}),\ \delta_U+\overline{\Lambda}\in\partial\tau\|\overline{U}\|_*,\\[2mm]
			\delta_{\lambda_{\scriptscriptstyle\I}}=\A_{\scriptscriptstyle\I}\overline{W}+\overline{b}y_{\scriptscriptstyle\I}+{v}_{\scriptscriptstyle\I}-e_{|\I|},\
			\max(\|\delta_W\|, |\delta_b|, \|\delta_{U}\|, \|\delta_{\lambda_{\scriptscriptstyle\I}}\|, \|\delta_{v_{\I}}\|, \|\delta_{\Lambda}\|)\leq\varepsilon,
		\end{array}\tag{C.1}
	\end{equation}
	where $\A_{\scriptscriptstyle\I}$ and $\A^*_{\scriptscriptstyle\I}$ are defined in (\ref{eq:def_AI}).
	By extending $\delta_{\lambda_{\scriptscriptstyle\I}}$ and $\delta_{v_{\scriptscriptstyle\I}}$ to the $n$-dimensional error vectors $\delta_{\lambda}$ and $\delta_{v}$ with the rest elements being $0$, we only need to show that
	$$
	\delta_{W}=\overline{W}+\A^*\overline{\lambda}+\overline{\Lambda},\quad \delta_b=y^{\top}\overline{\lambda},\quad \delta_{\lambda}=\A\overline{W}+\overline{b}y+\overline{v}-e_n, \quad \delta_v-\overline{\lambda}\in\partial\delta^*_{S^{\,i}}(\overline{v}).
	$$
	Indeed, due to the fact that $(\overline{\lambda})_{\scriptscriptstyle\overline{\I}}=0$, we obtain
	$$
	\A^*\overline{\lambda}=\A^*_{\scriptscriptstyle\I}{\lambda}_{\scriptscriptstyle\I}+\A^*_{\scriptscriptstyle\overline{\I}}(\overline{\lambda})_{\scriptscriptstyle\overline{\I}}=\A^*_{\scriptscriptstyle\I}{\lambda}_{\scriptscriptstyle\I}\quad \mbox{and}\quad y^{\top}\overline{\lambda}=y_{\scriptscriptstyle\I}^{\top}{\lambda}_{\scriptscriptstyle\I}+y_{\scriptscriptstyle\overline{\I}}^{\top}(\overline{\lambda})_{\scriptscriptstyle\overline{\I}}=y_{\scriptscriptstyle\I}^{\top}\overline{\lambda}_{\scriptscriptstyle\I},
	$$
	which follows from (\ref{eq:KKT_AS_reduced}) that $\delta_W=\overline{W}+\A^*\overline{\lambda}+\overline{\Lambda}$ and $\delta_b=\overline{\lambda}^{\top}y$.
	From expanding manners of $\overline{v}$ and $\delta_{\lambda}$, we know that ${v}_{\scriptscriptstyle\overline{\I}}=e_{\scriptscriptstyle\overline{\I}}-\A_{\scriptscriptstyle\overline{\I}}\overline{W}-\overline{b}y_{\scriptscriptstyle\overline{\I}}$, which further implies from (\ref{eq:KKT_AS_reduced}) that $\delta_{\lambda}=\A\overline{W}+\overline{b}y+\overline{v}-e_{n}$.
	At last, from $\J^{\bar{k}}(C_i)=\emptyset$, one has $\overline{v}_j<0$ for all $j\in\overline{\I}$. It follows that $(\delta_v)_{\scriptscriptstyle\overline{\I}}-(\overline{\lambda})_{\scriptscriptstyle\overline{\I}}=0=\nabla\delta^*_{S^{\,i}_{\scriptscriptstyle\overline{\I}}}({v}_{\scriptscriptstyle\overline{\I}})$. Combing with (\ref{eq:KKT_AS_reduced}), we obtain that $\delta_v-\overline{\lambda}\in\partial\delta^*_{S^{i}}(\overline{v})$. \qed
\end{proof}

\section{An isPADMM for solving SMM model }\label{Appendex:isPADMM}
In this part, we outline the framework of the inexact semi-proximal ADMM (isPADMM) to solve the SMM model (\ref{Model:SMM}). Specifically, according to the definition of the linear operator $\A$ in (\ref{def:A_AT}), the SMM model (\ref{Model:SMM}) can be equivalently reformulated as follows:
\begin{equation}\label{Model:SMM_one_constraint}
\begin{array}{cl}
	\mathop{\mbox{minimize}}\limits_{(W,b,U)\in\mathbb{R}^{p\times q}\times\mathbb{R}\times\mathbb{R}^{p\times q}} & \displaystyle\frac{1}{2}\|W\|^2_{\F}+\tau\|U\|_*+\delta_S^*(e_n-\A W-by)\\
	\mbox{subject to} & W-U=0. \tag{D.1}
\end{array}
\end{equation}
Its augmented Lagrangian function is defined for any $(W, b, U, \Lambda)\in\mathbb{R}^{p\times q}\times\mathbb{R}\times\mathbb{R}^{p\times q}\times\mathbb{R}^{p\times q}$ as follows:
$$
L_{\gamma}(W,b,U;\Lambda)=\frac{1}{2}\|W\|^2_{\F}+\tau\|U\|_*+\delta_S^*(e_n-\A W-by)+\langle \Lambda,W-U \rangle+\frac{\gamma}{2}\|W-U\|^2_{\F},
$$
where $\gamma$ is a positive penalty parameter.
The framework of isPADMM for solving (\ref{Model:SMM_one_constraint}) is outlined in Algorithm \ref{alg_isPADMM_SMM}.

\begin{algorithm}[htbp]
	\small
	\caption{An isPADMM for solving SMM model (\ref{Model:SMM_one_constraint})}
	\vskip 0.05in
	\noindent
	{\bf Initialization:} Let $\gamma>0$ and $\zeta\in(0,(1+\sqrt{5})/2)$ be given parameters, $\{\overline{\varepsilon}_k\}_{k\geq0}$ be a nonnegative summable sequence. Choose $\delta>0$ and an initial point $(U^0,\Lambda^0)\in\mathbb{R}^{p\times q}\times\mathbb{R}^{p\times q}$. Set $k=0$ and perform the following steps in each iteration:	
	\vskip 0.1in
	\begin{algorithmic}[0]
		\State {\bf Step 1}. Compute
		\begin{align}\label{eq:subproblem_W_isPADMM}	
			(W^{\,k+1},{b}^{\,k+1})
			=&\mathop{\mbox{argmin}}_{W,b}\left\{ L_{\gamma}\big(W,b,U^k;\Lambda^k\big)+\frac{\delta}{2}(b-b^k)^2-\langle d^{\,k}_W,W \rangle-bd^{\,k}_b \right\}\nonumber\\
			=&\mathop{\mbox{argmin}}_{W,b}\left\{ \frac{1+\gamma}{2}\|W+\frac{\Lambda^k-\gamma U^k}{1+\gamma}\|^2_{\F}+\delta^*_{S}(e_n-\A W-by)\right.\nonumber\\
			&\qquad\qquad\qquad\left.+\frac{\delta}{2}(b-b^k)^2-\langle d^{\,k}_W,W \rangle-bd^{\,k}_b \right\},\tag{D.2}
		\end{align}
		where $(d^{\,k}_W, d^{\,k}_b)\in\mathbb{R}^{p\times q}\times\mathbb{R}$ is an error term such that $
		(1/\gamma)\|\delta^{\,k}_W\|^2_{\F}+(1/\delta)|d^{\,k}_b|^2\leq\overline{\varepsilon}_k$.
		\vskip 0.05in
		\State {\bf Step 2}. Compute
		\begin{align*}
			U^{k+1}&=\mathop{\mbox{argmin}}\limits_{U\in\mathbb{R}^{p\times q}}\left\{L_{\gamma}(W^{k+1},b^{k+1},U;\Lambda^k)\right\}=\frac{1}{\gamma}\mbox{Prox}_{\tau\|\cdot\|_*}(\gamma W^{k+1}+\Lambda^k).
		\end{align*}
		\vskip 0.05in
		\State {\bf Step 3}. Update
		$
		\Lambda^{k+1}=\Lambda^k+\zeta\,\gamma\left(W^{k+1}-U^{k+1} \right).
		$
	\end{algorithmic}
	\label{alg_isPADMM_SMM}
\end{algorithm}

Notably, subproblem (\ref{eq:subproblem_W_isPADMM}) closely resembles the original SMM model (\ref{Model:SMM}), except
$\tau$ is set to zero and an additional proximal term involving
$b$ is included.
This similarity allows the direct application of the semismooth Newton-based augmented Lagrangian method to these subproblems, as discussed in Sections \ref{Sec:ALM-SNCG} and \ref{Sec:linear_system}.
The derivations, not detailed here, follow a rationale similar to that of the aforementioned algorithms.
Furthermore, by leveraging the results of \cite[Theorem 5.1]{chen2017efficient}, we can deduce the global convergence of Algorithm \ref{alg_isPADMM_SMM} in a direct manner.
To ensure computational feasibility, the isPADMM iterations are capped at
30,000, with $\delta$ set to  $10^{-6}$.

\section{A sGS-isPADMM for solving SMM model}\label{Appendex:sGS-isPADMM}
In this part, we present the framework of the symmetric Gauss-Seidel based inexact semi-proximal ADMM (sGS-isPADMM) to effectively solve the SMM model (\ref{Model:SMM_eq2}). Based on the definition of the augmented Lagrangian function in (\ref{eq:augmented Lagrangin funciton}),
the steps of the sGS-isPADMM for solving (\ref{Model:SMM_eq2}) are listed as follows.
\begin{algorithm}[htbp]
	\caption{A sGS-isPADMM for solving SMM model (\ref{Model:SMM_eq2})}
	\vskip 0.05in
	\noindent
	{\bf Initialization:} Let $\gamma>0$ and $\zeta\in(0,(1+\sqrt{5})/2)$ be given parameters, $\{\overline{\varepsilon}_k\}_{k\geq0}$ be a nonnegative summable sequence. Select a self-adjoint positive semidefinite linear operator $\mathcal{S}_1:\mathbb{R}^{p\times q}\rightarrow\mathbb{R}^{p\times q}$.
	Choose an initial point $(W^0,b^0,v^0,U^0,\lambda^0,\Lambda^0)\in\mathbb{R}^{p\times q}\times\mathbb{R}\times\mathbb{R}^{n}\times\mathbb{R}^{p\times q}\times\mathbb{R}^n\times\mathbb{R}^{p\times q}$. Set $k=0$ and perform the following steps in each iteration:	
	\vskip 0.1in
	\begin{algorithmic}[1]
		\State {\bf Step 1}. Compute
		{\small\begin{align*}\label{eq:subproblem_W_sGS}
			\overline{b}^{\,k+1}&=\mathop{\mbox{argmin}}_{b\in\mathbb{R}}\left\{ L_{\gamma}\big(W^k,b,v^k,U^k;\lambda^k,\Lambda^k\big) \right\}
			=-\frac{1}{n}y^{\top}\left(\A W^k+v^k-e_n+\frac{\lambda^k}{\gamma}\right),\nonumber\\
			W^{k+1}&=\mathop{\mbox{argmin}}_{W\in\mathbb{R}^{p\times q}}\left\{ L_{\gamma}(W,\overline{b}^{k+1},v^k,U^k;\lambda^k,\Lambda^k)+\frac{1}{2}\|W-W^k\|^2_{\mathcal{S}_1}-\langle \delta^k_W,W \rangle \right\},\nonumber\\
			b^{k+1}&=\mathop{\mbox{argmin}}_{b\in\mathbb{R}}\left\{ L_{\gamma}(W^{k+1},b,v^k,U^k;\lambda^k,\Lambda^k) \right\}=-\frac{1}{n}y^{\top}\left(\A W^{k+1}+v^k-e_n+\frac{\lambda^k}{\gamma}\right),\nonumber
		\end{align*}}
		where $\delta^k_W\in\mathbb{R}^{p\times q}$ is an error matrix such that $\|\delta^k_W\|\leq\overline{\varepsilon}_k$.
		\vskip 0.05in
		\State {\bf Step 2}. Compute
		{\small\begin{align*}
			v^{k+1}&=\frac{1}{\gamma}\mbox{Prox}_{\delta^*_{S}}\left(-\lambda^k-\gamma(\A W^{k+1}+b^{k+1}y-e_n)\right),\\
			U^{k+1}&= (1/\gamma)\mbox{Prox}_{\tau\|\cdot\|_*}\left(\Lambda^k+\gamma W^{k+1}\right).	
		\end{align*}}
		\vskip 0.05in
		\State {\bf Step 3}. Update
		\begin{align*}
			\lambda^{k+1}&=\lambda^k+\zeta\,\gamma(\A W^{k+1}+b^{k+1}y+v^{k+1}-e_n),\\
			\Lambda^{k+1}&=\Lambda^k+\zeta\,\gamma( W^{k+1}-U^{k+1}).
		\end{align*}
	\end{algorithmic}
	\label{alg_isPADMM_SMM}
\end{algorithm}

Building upon the results in \cite[Proposition 4.2 and Theorem 5.1]{chen2017efficient}, we can directly obtain the global convergence results for Algorithm \ref{alg_isPADMM_SMM}.
Lastly, the maximum number of iterations for the sGS-isPADMM is set to 30,000.


%
\section*{Statements and Declarations}
{\bf Funding} The work of Defeng Sun was supported in part by RGC Senior Research Fellow Scheme No. SRFS2223-5S02 and GRF Project No. 15307822.

\medskip

\noindent{\bf Availability of data and materials} The references of all datasets are provided in this published article.

\medskip

\noindent{\bf Conflict of interest} The authors declare that they have no conflict of interest.

\nocite{*}
\bibliographystyle{spmpsci}      
\bibliography{Reference_SMM}   

\begin{thebibliography}{10}
\providecommand{\url}[1]{{#1}}
\providecommand{\urlprefix}{URL }
\expandafter\ifx\csname urlstyle\endcsname\relax
  \providecommand{\doi}[1]{DOI~\discretionary{}{}{}#1}\else
  \providecommand{\doi}{DOI~\discretionary{}{}{}\begingroup
  \urlstyle{rm}\Url}\fi

\bibitem{artacho2008characterization}
Artacho, F.A., Geoffroy, M.H.: Characterization of metric regularity of
  subdifferentials.
\newblock J. Convex Anal. \textbf{15}(2), 365--380 (2008)

\bibitem{bauschke1999strong}
Bauschke, H.H., Borwein, J.M., Li, W.: Strong conical hull intersection
  property, bounded linear regularity, {J}ameson’s property ({G}), and error
  bounds in convex optimization.
\newblock Math. Program. \textbf{86}(1), 135--160

\bibitem{chen2017efficient}
Chen, L., Sun, D.F., Toh, K.C.: An efficient inexact symmetric {Gauss-Seidel}
  based majorized {ADMM} for high-dimensional convex composite conic
  programming.
\newblock Math. Program. \textbf{161}(1), 237--270 (2017)

\bibitem{chen2020novel}
Chen, Y., Hang, W., Liang, S., Liu, X., Li, G., Wang, Q., Qin, J., Choi, K.S.:
  A novel transfer support matrix machine for motor imagery-based brain
  computer interface.
\newblock Front. Neurosci. \textbf{14}, 606949 (2020)

\bibitem{clarke1990optimization}
Clarke, F.H.: Optimization and {N}onsmooth {A}nalysis.
\newblock John Wiley and Sons, New York (1983)

\bibitem{cortes1995support}
Cortes, C., Vapnik, V.: Support-vector networks.
\newblock Mach. Learn. \textbf{20}, 273--297 (1995)

\bibitem{cui2017quadratic}
Cui, Y., Ding, C., Zhao, X.: Quadratic growth conditions for convex matrix
  optimization problems associated with spectral functions.
\newblock SIAM J. Optim. \textbf{27}(4), 2332--2355 (2017)

\bibitem{cui2016asymptotic}
Cui, Y., Sun, D.F., Toh, K.C.: On the asymptotic superlinear convergence of the
  augmented {L}agrangian method for semidefinite programming with multiple
  solutions.
\newblock arXiv preprint
  \href{https://arxiv.org/abs/1610.00875}{\color{blue}arXiv:1610.00875}  (2016)

\bibitem{cui2019r}
Cui, Y., Sun, D.F., Toh, K.C.: On the {R}-superlinear convergence of the {KKT}
  residuals generated by the augmented {L}agrangian method for convex composite
  conic programming.
\newblock Math. Program. \textbf{178}(1), 381--415 (2019)

\bibitem{duan2017quantum}
Duan, B., Yuan, J., Liu, Y., Li, D.: Quantum algorithm for support matrix
  machines.
\newblock Phys. Rev. A \textbf{96}(3), 032301 (2017)

\bibitem{FP2007}
Facchinei, F., Pang, J.S.: Finite-{D}imensional {V}ariational {I}nequalities
  and {C}omplementarity {P}roblems.
\newblock Springer Science \& Business Media, New York (2007)

\bibitem{feng2022support}
Feng, R., Xu, Y.: Support matrix machine with pinball loss for classification.
\newblock Neural Comput. Appl. \textbf{34}, 18643--18661 (2022)

\bibitem{feng2022subspace}
Feng, R., Zhong, P., Xu, Y.: A subspace elimination strategy for accelerating
  support matrix machine.
\newblock Pac. J. Optim. \textbf{18}(1), 155--176 (2022)

\bibitem{geng2023fault}
Geng, M., Xu, Z., Mei, M.: Fault diagnosis method for railway turnout with
  pinball loss-based multiclass support matrix machine.
\newblock Appl. Sci. \textbf{13}(22), 12375 (2023)

\bibitem{goldstein2014fast}
Goldstein, T., O'Donoghue, B., Setzer, S., Baraniuk, R.: Fast alternating
  direction optimization methods.
\newblock SIAM J. Imaging Sci. \textbf{7}(3), 1588--1623 (2014)

\bibitem{hang2020deep}
Hang, W., Feng, W., Liang, S., Wang, Q., Liu, X., Choi, K.S.: Deep stacked
  support matrix machine based representation learning for motor imagery eeg
  classification.
\newblock Comput. Meth. Programs Biomed. \textbf{193}, 105466 (2020)

\bibitem{hang2023deep}
Hang, W., Li, Z., Yin, M., Liang, S., Shen, H., Wang, Q., Qin, J., Choi, K.S.:
  Deep stacked least square support matrix machine with adaptive multi-layer
  transfer for {EEG} classification.
\newblock Biomed. Signal Process. Control \textbf{82}, 104579 (2023)

\bibitem{harrow2009quantum}
Harrow, A.W., Hassidim, A., Lloyd, S.: Quantum algorithm for linear systems of
  equations.
\newblock Phys. Rev. Lett. \textbf{103}(15), 150502 (2009)

\bibitem{hastie2009elements}
Hastie, T., Tibshirani, R., Friedman, J.H., Friedman, J.H.: The {E}lements of
  {S}tatistical {L}earning: {D}ata {M}ining, {I}nference, and {P}rediction,
  vol.~2.
\newblock Springer (2009)

\bibitem{hsieh2014nuclear}
Hsieh, C.J., Olsen, P.: Nuclear norm minimization via active subspace
  selection.
\newblock In: International Conference on Machine Learning, pp. 575--583. PMLR
  (2014)

\bibitem{kaifeng2011algorithms}
Jiang, K.: Algorithms for {L}arge {S}cale {N}uclear {N}orm {M}inimization and
  {C}onvex {Q}uadratic {S}emidefinite {P}rogramming {P}roblems.
\newblock Ph.D. thesis (2011)

\bibitem{jiang2013solving}
Jiang, K., Sun, D.F., Toh, K.C.: Solving nuclear norm regularized and
  semidefinite matrix least squares problems with linear equality constraints.
\newblock In: Discrete Geometry and Optimization, pp. 133--162. Springer (2013)

\bibitem{kumari2024support}
Kumari, A., Akhtar, M., Shah, R., Tanveer, M.: Support matrix machine: A
  review.
\newblock Neural Networks (2024).
  \href{https://doi.org/10.1016/j.neunet.2024.106767}
  {https://doi.org/10.1016/j.neunet.2024.106767}

\bibitem{el2012safe}
Laurent, E.G., Vivian, V., Tarek, R.: Safe feature elimination in sparse
  supervised learning.
\newblock Pac. J. Optim. \textbf{8}(4), 667--698 (2012)

\bibitem{li2024support}
Li, H., Xu, Y.: Support matrix machine with truncated pinball loss for
  classification.
\newblock Appl. Soft. Comput. \textbf{154}, 111311 (2024)

\bibitem{li2023mars}
Li, Q., Jiang, B., Sun, D.F.: {MARS}: A second-order reduction algorithm for
  high-dimensional sparse precision matrices estimation.
\newblock J. Mach. Learn. Res. \textbf{24}, 1--44 (2023)

\bibitem{li2022fusion}
Li, X., Cheng, J., Shao, H., Liu, K., Cai, B.: A fusion cwsmm-based framework
  for rotating machinery fault diagnosis under strong interference and
  imbalanced case.
\newblock IEEE Trans. Industr. Inform. \textbf{18}(8), 5180--5189 (2022)

\bibitem{li2021fusion}
Li, X., Cheng, J., Shao, H., Liu, K., Cai, B.: A fusion {CWSMM}-based framework
  for rotating machinery fault diagnosis under strong interference and
  imbalanced case.
\newblock IEEE Trans. Ind. Inform. \textbf{18}(8), 5180--5189 (2022)

\bibitem{li2024dynamics}
Li, X., Li, S., Wei, D., Si, L., Yu, K., Yan, K.: Dynamics simulation-driven
  fault diagnosis of rolling bearings using security transfer support matrix
  machine.
\newblock Reliab. Eng. Syst. Saf. \textbf{243}, 109882 (2024)

\bibitem{li2023intelligent}
Li, X., Li, Y., Yan, K., Shao, H., Lin, J.J.: Intelligent fault diagnosis of
  bevel gearboxes using semi-supervised probability support matrix machine and
  infrared imaging.
\newblock Reliab. Eng. Syst. Saf. \textbf{230}, 108921 (2023)

\bibitem{li2024intelligent}
Li, X., Li, Y., Yan, K., Si, L., Shao, H.: An intelligent fault detection
  method of industrial gearboxes with robustness one-class support matrix
  machine toward multisource nonideal data.
\newblock IEEE ASME Trans. Mechatron. \textbf{29}(1), 388--399 (2024)

\bibitem{li2022highly}
Li, X., Shao, H., Lu, S., Xiang, J., Cai, B.: Highly efficient fault diagnosis
  of rotating machinery under time-varying speeds using {LSISMM} and small
  infrared thermal images.
\newblock IEEE Trans. Syst. Man Cybern. Syst. \textbf{52}(12), 7328--7340
  (2022)

\bibitem{li2018qsdpnal}
Li, X., Sun, D.F., Toh, K.C.: {QSDPNAL}: {A} two-phase augmented {L}agrangian
  method for convex quadratic semidefinite programming.
\newblock Math. Program. Comput. \textbf{10}, 703--743 (2018)

\bibitem{li2020non}
Li, X., Yang, Y., Pan, H., Cheng, J., Cheng, J.: Non-parallel least squares
  support matrix machine for rolling bearing fault diagnosis.
\newblock Mech. Mach. Theory \textbf{145}, 103676 (2020)

\bibitem{li2022auto}
Li, Y., Wang, D., Liu, F.: The auto-correlation function aided sparse support
  matrix machine for {EEG}-based fatigue detection.
\newblock IEEE Trans. Circuits Syst. II-Express Briefs \textbf{70}(2), 836--840
  (2023)

\bibitem{liang2024adaptive}
Liang, S., Hang, W., Lei, B., Wang, J., Qin, J., Choi, K.S., Zhang, Y.:
  Adaptive multimodel knowledge transfer matrix machine for {EEG}
  classification.
\newblock IEEE Trans. Neural Netw. Learn. Syst. \textbf{35}(6), 7726--7739
  (2024)

\bibitem{liang2022deep}
Liang, S., Hang, W., Yin, M., Shen, H., Wang, Q., Qin, J., Choi, K.S., Zhang,
  Y.: Deep {EEG} feature learning via stacking common spatial pattern and
  support matrix machine.
\newblock Biomed. Signal Process. Control \textbf{74}, 103531 (2022)

\bibitem{lin2020adaptive}
Lin, M., Yuan, Y., Sun, D.F., Toh, K.C.: Adaptive sieving with {PPDNA}:
  {G}enerating solution paths of exclusive lasso models.
\newblock arXiv preprint
  \href{https://arxiv.org/abs/2009.08719}{\color{blue}arXiv:2009.08719}  (2020)

\bibitem{liu2012implementable}
Liu, Y.J., Sun, D., Toh, K.C.: An implementable proximal point algorithmic
  framework for nuclear norm minimization.
\newblock Math. Program. \textbf{133}, 399--436 (2012)

\bibitem{luo2015support}
Luo, L., Xie, Y., Zhang, Z., Li, W.J.: Support matrix machines.
\newblock In: International Conference on Machine Learning, pp. 938--947. PMLR
  (2015)

\bibitem{mangasarian1988simple}
Mangasarian, O.L.: A simple characterization of solution sets of convex
  programs.
\newblock Oper. Res. Lett. \textbf{7}, 21--26 (1988)

\bibitem{meng2005semismoothness}
Meng, F., Sun, D.F., Zhao, G.: Semismoothness of solutions to generalized
  equations and the {M}oreau-{Y}osida regularization.
\newblock Math. Program. \textbf{104}, 561--581 (2005)

\bibitem{ogawa2013safe}
Ogawa, K., Suzuki, Y., Takeuchi, I.: Safe screening of non-support vectors in
  pathwise {SVM} computation.
\newblock In: International Conference on Machine Learning, pp. 1382--1390.
  PMLR (2013)

\bibitem{pan2022pinball}
Pan, H., Sheng, L., Xu, H., Tong, J., Zheng, J., Liu, Q.: Pinball transfer
  support matrix machine for roller bearing fault diagnosis under limited
  annotation data.
\newblock Appl. Soft. Comput. \textbf{125}, 109209 (2022)

\bibitem{pan2023deep}
Pan, H., Sheng, L., Xu, H., Zheng, J., Tong, J., Niu, L.: Deep stacked pinball
  transfer matrix machine with its application in roller bearing fault
  diagnosis.
\newblock Eng. Appl. Artif. Intell. \textbf{121}, 105991 (2023)

\bibitem{pan2024semi}
Pan, H., Xu, H., Zheng, J., Shao, H., Tong, J.: A semi-supervised matrixized
  graph embedding machine for roller bearing fault diagnosis under few-labeled
  samples.
\newblock IEEE Trans. Industr. Inform. \textbf{20}(1), 854--863 (2024)

\bibitem{pan2022multi}
Pan, H., Xu, H., Zheng, J., Su, J., Tong, J.: Multi-class fuzzy support matrix
  machine for classification in roller bearing fault diagnosis.
\newblock Adv. Eng. Inform. \textbf{51}, 101445 (2022)

\bibitem{pan2019fault}
Pan, H., Yang, Y., Zheng, J., Li, X., Cheng, J.: A fault diagnosis approach for
  roller bearing based on symplectic geometry matrix machine.
\newblock Mech. Mach. Theory \textbf{140}, 31--43 (2019)

\bibitem{pan2019novel}
Pan, X., Xu, Y.: A novel and safe two-stage screening method for support vector
  machine.
\newblock IEEE Trans. Neural Netw. Learn. Syst. \textbf{30}(8), 2263--2274
  (2019)

\bibitem{qi2006quadratically}
Qi, H., Sun, D.F.: A quadratically convergent {N}ewton method for computing the
  nearest correlation matrix.
\newblock SIAM J. Matrix Anal. Appl. \textbf{28}(2), 360--385 (2006)

\bibitem{qi1993nonsmooth}
Qi, L., Sun, J.: A nonsmooth version of {N}ewton's method.
\newblock Math. Program. \textbf{58}, 353--367 (1993)

\bibitem{qian2019robust}
Qian, C., Tran-Dinh, Q., Fu, S., Zou, C., Liu, Y.: Robust multicategory support
  matrix machines.
\newblock Math. Program. \textbf{176}, 429--463 (2019)

\bibitem{razzak2019multiclass}
Razzak, I., Blumenstein, M., Xu, G.: Multiclass support matrix machines by
  maximizing the inter-class margin for single trial {EEG} classification.
\newblock IEEE Trans. Neural Syst. Rehabilitation Eng. \textbf{27}(6),
  1117--1127 (2019)

\bibitem{robinson1981some}
Robinson, S.M.: Some continuity properties of polyhedral multifunctions.
\newblock in Mathematical Programming at Oberwolfach, Math. Program. Stud. pp.
  206--214 (1981)

\bibitem{rockafellar1970convex}
Rockafellar, R.T.: Convex {A}nalysis.
\newblock Princeton University Press, Princeton (1970)

\bibitem{rockafellar1976augmented}
Rockafellar, R.T.: Augmented {L}agrangians and applications of the proximal
  point algorithm in convex programming.
\newblock Math. Oper. Res. \textbf{1}(2), 97--116 (1976)

\bibitem{Rockafellar1976}
Rockafellar, R.T.: Augmented {Lagrangians} and applications of the proximal
  point algorithm in convex programming.
\newblock Math. Oper. Res. \textbf{1}(2), 97--116 (1976)

\bibitem{rockafellar2009variational}
Rockafellar, R.T., Wets, R.J.B.: Variational {A}nalysis, vol. 317.
\newblock Springer Science \& Business Media (2009)

\bibitem{sanei2013eeg}
Sanei, S., Chambers, J.A.: EEG {S}ignal {P}rocessing.
\newblock John Wiley \& Sons (2013)

\bibitem{shapiro2003sensitivity}
Shapiro, A.: Sensitivity analysis of generalized equations.
\newblock J. Math. Sci. \textbf{115}(4), 2554--2565 (2003)

\bibitem{wang2014scaling}
Wang, J., Wonka, P., Ye, J.: Scaling {SVM} and least absolute deviations via
  exact data reduction.
\newblock In: International Conference on Machine Learning, pp. 523--531. PMLR
  (2014)

\bibitem{watson1992characterization}
Watson, G.A.: Characterization of the subdifferential of some matrix norms.
\newblock Linear Alg. Appl. \textbf{170}, 33--45 (1992)

\bibitem{wu2023convex}
Wu, C., Cui, Y., Li, D., Sun, D.F.: Convex and nonconvex risk-based linear
  regression at scale.
\newblock INFORMS J. Comput. \textbf{35}(4), 797--816 (2023)

\bibitem{xu2024intelligent}
Xu, H., Pan, H., Zheng, J., Tong, J., Zhang, F., Chu, F.: Intelligent fault
  identification in sample imbalance scenarios using robust low-rank matrix
  classifier with fuzzy weighting factor.
\newblock Appl. Soft. Comput. \textbf{152}, 111229 (2024)

\bibitem{yang2004two}
Yang, J., Zhang, D., Frangi, A.F., Yang, J.y.: Two-dimensional {PCA}: a new
  approach to appearance-based face representation and recognition.
\newblock IEEE Trans. Pattern Anal. Mach. Intell. \textbf{26}(1), 131--137
  (2004)

\bibitem{yuan2022dimension}
Yuan, Y., Chang, T.H., Sun, D.F., Toh, K.C.: A dimension reduction technique
  for large-scale structured sparse optimization problems with application to
  convex clustering.
\newblock SIAM J. Optim. \textbf{32}(3), 2294--2318 (2022)

\bibitem{yuan2023adaptive}
Yuan, Y., Lin, M., Sun, D.F., Toh, K.C.: Adaptive sieving: A dimension
  reduction technique for sparse optimization problems.
\newblock arXiv preprint
  \href{https://arxiv.org/abs/2306.17369}{\color{blue}arXiv:2306.17369}  (2023)

\bibitem{zhang2022proximal}
Zhang, W., Liu, Y.: Proximal support matrix machine.
\newblock J. appl. math. phys \textbf{10}(7), 2268--2291 (2022)

\bibitem{ZST2010}
Zhao, X.Y., Sun, D.F., Toh, K.C.: A {Newton-CG} augmented {Lagrangian} method
  for semidefinite programming.
\newblock SIAM J. Optim. \textbf{20}(4), 1737--1765 (2010)

\bibitem{zheng2018robust}
Zheng, Q., Zhu, F., Heng, P.A.: Robust support matrix machine for single trial
  {EEG} classification.
\newblock IEEE Trans. Neural Syst. Rehabilitation Eng. \textbf{26}(3), 551--562
  (2018)

\bibitem{zheng2018sparse}
Zheng, Q., Zhu, F., Qin, J., Chen, B., Heng, P.A.: Sparse support matrix
  machine.
\newblock Pattern Recognit. \textbf{76}, 715--726 (2018)

\bibitem{zheng2018multiclass}
Zheng, Q., Zhu, F., Qin, J., Heng, P.A.: Multiclass support matrix machine for
  single trial {EEG} classification.
\newblock Neurocomputing \textbf{275}, 869--880 (2018)

\bibitem{zhou2017unified}
Zhou, Z., So, A.M.C.: A unified approach to error bounds for structured convex
  optimization problems.
\newblock Math. Program. \textbf{165}, 689--728 (2017)

\bibitem{zimmert2015safe}
Zimmert, J., de~Witt, C.S., Kerg, G., Kloft, M.: Safe screening for support
  vector machines.
\newblock In: NIPS Workshop on Optimization in Machine Learning (OPT) (2015)

\end{thebibliography}


\end{document}